\newcommand{\Address}{{
Tokio Sasaki, \textsc{Department of Mathematics, University of Miami, 1365 Memorial Drive, Coral Gables, FL 33146}\par\nopagebreak
  \textit{E-mail address}: \texttt{sasakitokio@math.miami.edu}
  }}
\newtheorem{thm}{Theorem}
\newtheorem{lem}{Lemma}
\newtheorem{prop}{Proposition}
\newtheorem{cor}{Corollary}
\newtheorem*{conj}{Conjecture}
\theoremstyle{definition}
\theoremstyle{remark}
\newtheorem*{remark}{Remark}
\newtheorem*{notation}{Notation}
\numberwithin{thm}{section}
\numberwithin{lem}{section}
\numberwithin{prop}{section}
\numberwithin{cor}{section}
\numberwithin{definition}{section}
\newcommand{\twist}{2 \pi i}
\newcommand{\Li}{{\rm Li}}
\newcommand{\Hdg}{{\rm Hdg}}
\newcommand{\Hom}{{\rm Hom}}
\newcommand{\MHS}{{\rm MHS}}
\newcommand{\VMHS}{{\rm VMHS}}
\newcommand{\MHM}{{\rm MHM}}
\newcommand{\Ker}{{\rm Ker}}
\newcommand{\Coker}{{\rm Coker}}
\newcommand{\Ext}{{\rm Ext}}
\newcommand{\Gr}{{\rm Gr}}
\newcommand{\Pic}{{\rm Pic}}
\newcommand{\dec}{{\rm dec}}
\newcommand{\ind}{{\rm ind}}
\newcommand{\Griff}{{\rm Griff}}
\begin{document}

\begin{abstract}
	We find a new method to detect the linearly independence of $\mathbb{R}$-regulator indecomposable $K_1$-cycles which is based on the singularities and limits of admissible normal functions. We also construct a collection of higher Chow cycles on certain surfaces in $\mathbb{P}^3$ of degree $d \ge 4$ which degenerate to an arrangement of $d$ planes in general position. By applying our method, we show that these higher Chow cycles are enough to show the surjectivity of the real regulator map when $d = 4$. Hence our construction gives a new explicit proof of the Hodge-$\mathcal{D}$-Conjecture for a certain type of $K3$ surfaces. As an application, we also construct a general semistable degeneration family of degree $d + 1$ threefolds in $\mathbb{P}^3 \times \mathbb{P}^1 \times \mathbb{P}^1$ such that a codimension 1 stratum is a surface of the above type. The real regulator indecomposability of our higher Chow cycles implies that the Griffiths group of  the general fiber of these threefolds is non trivial.
\end{abstract}

\title{Limits and Singularities of Normal Functions}

\author{Tokio Sasaki}

\maketitle

\section{Introduction}
On a smooth projective variety $X$ over $\mathbb{C}$, finding interesting cycles is one of the central themes of Algebraic Geometry.
Especially, the celebrated Hodge Conjecture states the surjectivity of the cycle class map from the Chow group $CH^p(X)$ with the rational coefficients to the Hodge cycle class $\Hdg(X)$.
By changing the Chow group to the higher Chow groups $CH^p(X,n)$ with real coefficients and the cycle map to the real regulator map $r_{\mathcal{D}, \mathbb{R}}^{p,n} \colon CH^p(X,n)\otimes \mathbb{R} \to H^{2p-n}_{\mathcal{D}}(X, \mathbb{R}(p))$, 
we can generalize the Hodge Conjecture to the Hodge-$\mathcal{D}$-Conjecture, which states the surjectivity of $r_{\mathcal{D}, \mathbb{R}}^{p,n}$.
Unfortunately this conjecture is false for general projective varieties, but it is still open (and expected to hold) for $X$ defined over $\overline{\mathbb{Q}}$ (\cite{MS97}).
The most significant result is due to X. Chen and J. Lewis. They proved that the Hodge-$\mathcal{D}$-Conjecture holds for (analytically) general polarized $K3$ surfaces in the moduli space by observing the deformation of the higher cycles along the degeneration of the general $K3$ to a special one with Picard number 20 (which is called Bryan-Leung $K3$ surface). See the introduction of \cite{CL05}  for more historical detail and results.

The aim of this paper is to introduce a a new method to detect the linear independence of indecomposable $K_1$-cycles on the nearby fiber by using the asymptotic behaviors of real regulators. We also find a systematic construction of a collection of higher Chow cycles on a certain type of families $\{ X_t \}_{t \in \mathbb{P}^1}$ of degree $d$ surfaces in $\mathbb{P}^3$ such that our method can show that these higher cycles are sufficient to span the Deligne cohomology under $r_{\mathcal{D}, \mathbb{R}}^{2,1}$ when $d = 4$.
While the existence of such cycles is abstractly contained in Chen and Lewis's work, our construction is completely explicit and concrete.
The precise construction is given in Section \ref{Construction of Families of Higher Cycles}, but roughly the type of surfaces we consider has the form
\[X_t \colon L_1L_2 \cdots L_d + tM_1M_2 \cdots M_d =0  \subset \mathbb{P}^3\]
with general $t \in \mathbb{P}^1$ and linear forms $L_i, M_l$ in general position. Then each intersection $L_i \cap M_l$ defines a line on $X_t$, which is constant even when we move $t$.
By choosing three lines of this type with the boundaries at $L_i \cap L_j \cap M_l$, we can construct a higher Chow cycle $\gamma_{ijk,l} \in CH^2(X_t, 1)$ so that its support is just a union of three lines.
By changing the roles of the linear forms $L$ and $M$, we also can construct another type of higher Chow cycle $\delta_{i, lmn}$, and moreover each line $L_i \cap M_l$ as an algebraic cycle also defines an element $\lambda_{il}$ of $CH^2(X_t, 1)$ in the naive way.
Theorem \ref{Hodge-D-Conjecture} states that these higher cycles $\{ \gamma_{ijk,l} \}, \{\delta_{i, lmn}\}, \{\lambda_{il}\}$ are enough to prove the Hodge-$\mathcal{D}$-Conjecture for $d = 4$ and general choices of $t, L_i, M_s$.

Our method is based on the theory of limits and singularities of admissible normal functions.
After a resolution of singularities and change of the coordinates, we may consider $\{ X_t \}_{t \in \Delta^*}$ as a semistable degeneration to the simple normal crossing divisor $X_0$ with smooth fibers over the punctual unit disc $\Delta^* = \Delta \setminus \{0\}$.
The Abel-Jacobi values of $\gamma_{ijk,l}$ and $\delta_{i, lmn}$ as families of higher Chow cycles define holomorphic sections of the intermediate Jacobian bundle over $\Delta^*$, which are examples of admissible normal functions.
Roughly speaking, the \textit{limit} of the admissible normal function associated to a family of higher Chow cycles describes the limiting behavior of the Abel-Jacobi value as $t$ approaches to $0$.
However, generally the degeneration of the family of higher Chow cycles may not be a higher Chow cycle, since it may have some obstructions coming from the singularities.
Such an obstruction can be described as another invariant, which is called the \textit{singularity} of the admissible normal function.

Recently, the limiting behaviors of complex valued admissible (or usual) normal functions has been studied ( \cite{GGK10} \cite{DK11} \cite{dDI17}) and it is not difficult to show that the singularity invariants factor through the projection to the real regulator.
In fact, for general $d$ we show that each $\gamma_{ijk,l}$ has non-trivial singularities and moreover $\{ \gamma_{ijk,l} \} \cup \{\lambda_{il}\}$ span the codomain $\Hdg(\Coker N)$ of this invariant, where $N$ denotes the log monodromy action around $t = 0$ (Theorem \ref{main thm}) and this implies that $\{\gamma_{ijk,l}\}$ span a 19 dimensional subspace of $H^{1,1}_{\mathbb{R}}$.

On the other hand, the limit invariants are typically killed by the projection to the real regulator. As our new method, in Section \ref{Higher Chow Cycles with Non-trivial Limits} and \ref{Application: Hodge-D-Conjecture for a certain type of K3 surfaces} we dig further into the asymptotic behavior of the real regulator to recover these limits.
We show not only the non-triviality of the limit of $\delta_{i, lmn}$ when $d=4$, but also the limit of its real regulator value is linearly independent from that of $\{ \frac{1}{\log (t)} \gamma_{ijk,l} \}$.
These results give explicit proof of the Hodge-$\mathcal{D}$-Conjecture for this type of $K3$ surface.
We also remark that our construction itself yields a collection of higher cycles on the surface $X_t$ of general degree $d \ge 4$. While the Hodge-$\mathcal{D}$-Conjecture is known to be false for very general surfaces in $\mathbb{P}^3$ of degree $ \ge 5$, it is still an interesting problem to determine subfamilies on which the conjecture holds.
Though we are not able to prove Hodge-$\mathcal{D}$-Conjecture yet for $X_t$ of general degree,  at lease each of $\gamma_{ijk,l}$ is still $\mathbb{R}$-regulator indecomposable (cf. Section \ref{Higher Chow Groups and Hodge-D-Conjecture} for the definition). It suggests that the higher cycles $\{ \gamma_{ijk,l} \}, \{\delta_{i, lmn}\}, \{\lambda_{il}\}$ may indicate an explicit proof.

As an application, the higher cycle $\gamma_{ijk,l}$ also gives a new construction of threefolds with non-trivial Griffiths groups.
The Griffiths group $\Griff^p(X)$ of a projective variety $X$ is defined by the quotient $CH^p_{hom}(X) / CH^p_{alg}(X)$ by the subgroup $CH^p_{alg}(X)$ of cycles which are algebraically equivalent to zero.
Cycles in $\Griff^2(X)$ and their normal functions provide the B-model for Morrison and Walcher's work on the open mirror symmetry (\cite{MW09}).
Meanwhile, C. Doran, A. Harder and A. Thompson introduced a non-toric mirror scenario involving Tyurin degenerations, in which the Calabi-Yau threefolds degenerate to a union of quasi-Fano threefolds intersecting along a $K3$ surface (\cite{DHT17}).
Key to studying open mirror symmetry in the latter setting would be to construct $K_1$-cycles on the $K3$ surface which are limits of $K_0$-cycles on the nearby Calabi-Yau threefold (this is called ``going-up'' in the theory of the $K$-theory elevator which is introduced in \cite{dDI17}).
For this construction, one will need totally concrete $K_1$-cycles on the $K3$ surface, and this point is an advantage of our explicit proof of the existence of $\mathbb{R}$-regulator indecomposable cycles.
In Section \ref{Application: Threefold with Non-Trivial Griffiths Groups}, starting from a general degree $d$ surface $X_{t_0}$ defined as above, we construct a semistable degeneration family $\mathcal{Y}$ of threefolds, which is an example of Tyurin degeneration when $d = 4$.
Its singular fiber $Y_0$ consists of the union of the product $X_{t_0} \times \mathbb{P}^1$ and two blown up copies of $\mathbb{P}^3$, meeting along two copies of $X_{t_0}$ (The picture before taking the blow up is drawn in Figure 2).

Applying the theory of the $K$-theory elevator, we can shift the higher Chow cycle $ \gamma_{ijk,l} $  in the intersection $X_{t_0} \times \mathbb{P}^1$  of $Y_0$ to an algebraic cycle in one of the blown up $\mathbb{P}^3$, which is a fiber of a family of algebraic cycles $\mathcal{C}_ {ijk,l}$ on $\mathcal{Y}$.
$\mathbb{R}$-regulator indecomposability of $\gamma_{ijk,l}$ implies the non-triviality of the general fiber of $\mathcal{C}_ {ijk,l}$  in the Griffiths groups.
Therefore this yields a new example exhibiting the connection between the algebraically non-trivial cycles and $\mathbb{R}$-regulator indecomposable cycles.

This paper is organized as follows.
In Section \ref{Higher Chow Groups and Hodge-D-Conjecture}, we briefly recall the definitions of the higher Chow cycles, indecomposable cycles, real regulator map, and the statement of Hodge-$\mathcal{D}$-conjecture. We also introduce the KLM formula, which is an essential tool in computing the Abel-Jacobi maps.
In Section \ref{Construction of Families of Higher Cycles}, we define the family of surfaces $\mathcal{X} = \{ X_t \}$ and construct the specific higher Chow cycles $\gamma_{ijk,l}$, $\delta_{i, lmn}$, and $\lambda_{il}$ on this family.
Before entering the proof of the main theorem, we introduce the definition and the general discussion about the limits and singularities of higher normal functions in Section \ref{Singularities and limits of Normal Functions}.
Then, in Section \ref{Higher Chow Cycles with Non-trivial Singularities} and Section \ref{Higher Chow Cycles with Non-trivial Limits} we show the non-triviality of these invariants for the above higher Chow cycles respectively, and prove the Hodge-$\mathcal{D}$-Conjecture for our case in Section \ref{Application: Hodge-D-Conjecture for a certain type of K3 surfaces}.
Finally, we construct a threefold with non-trivial Griffiths groups starting from $X_{t_0}$ as another application in Section \ref{Application: Threefold with Non-Trivial Griffiths Groups}.

\subsection*{Acknowledgements}
The author acknowledge support under NSF FRG grant [DMS-1361147; PI: Matt Kerr], and wish to thank his advisor Matt Kerr for helpful discussions and encouragement. He also would like to thank Tomohide Terasoma for firstly suggesting him to study the higher cycles on this type of $K3$ surface.
%%%%%%%%%%%%%%%%%%%%%%%%%%%%%%%%%%%%%%%%%%%%%%%%%

\section{Higher Chow Groups and Hodge-$\mathcal{D}$-Conjecture}
\label{Higher Chow Groups and Hodge-D-Conjecture}
Throughout this paper, we fix the base field to be $\mathbb{C}$ and an algebraic variety means an integral separated scheme of finite type over $\mathbb{C}$.
Firstly, we recall the definition of the higher Chow groups.
See \cite{B86} for the original construction with algebraic simplexes and \cite{Le94} for the cubical version, which we use here.
The algebraic $n$-cube is defined by
\[
\Box^n \coloneqq (\mathbb{P}^1 \setminus \{ 1 \})^n.
\]
For each $i$ $(0 \leq i \leq n)$, there is the $i$-th-face map $\rho_i^{\epsilon} \colon \Box^{n-1} \hookrightarrow \Box^{n}$ with $\epsilon = 0, \infty$ defined by the embedding $(z_1, z_2, \ldots, z_{n-1}) \mapsto (z_1, z_2, \ldots, z_{i-1}, \epsilon, z_i, \ldots, z_n)$. The facet $\partial_i^{\epsilon} \Box^n$ is defined by the image of $\rho^{\epsilon}_i $ and more generally the face $\partial^{\underline{\epsilon}}_I \Box^n$ for each $I \subset \left\{ 0, \ldots, n \right\}$ and $\underline{\epsilon} = \{\epsilon(i)\}_{i \in I}$ is defined by $\bigcap_{i \in I}\partial_i^{\epsilon(i)}\Box^n$.
We also denote $\partial \Box^n \coloneqq \bigcap_{i \in I } \bigcup_{\epsilon = 0, \infty} \partial^{\epsilon}_i \Box^n$.

Let $U$ be a quasi-projective variety.
For $p, n \in \mathbb{Z}_{\geq 0}$, $\mathcal{C}^p(U, n)$ is defined as the free abelian group generated by subvarieties of $U \times \Box^n$ of codimension $p$ which intersects each $U  \times \partial^{\underline{\epsilon}}_I \Box^n$ properly.
It contains the subgroup $\mathcal{D}^p(U , n)$ which is generated by the pullbacks of cycles via face projections $U \times \Box^n \twoheadrightarrow U \times \Box^{n- |I|}$, and we denote the quotient $\mathcal{C}^p(U, n) / \mathcal{D}^p(U, n)$ by $\mathcal{Z}^p(U, n)$.
 Then $\mathcal{Z}^p(U, \bullet)$ becomes a chain complex with the well-defined boundary map
\[
\partial \coloneqq \sum_{i} (-1)^i ((\rho^0_i)^* - (\rho^{\infty}_i)^*) \colon \mathcal{Z}^p( U , n) \to \mathcal{Z}^p(U , n-1).
\]
An element of $\mathcal{Z}^p(U, n)$ is called a \textit{precycle} on $U$. 
The \textit{higher Chow groups} are defined by taking the homology of this complex:
\[ CH^p(U, n) \coloneqq  H_n(\mathcal{Z}^p(U, \bullet)).\]
Note that $CH^p(U) = CH^p(U, 0)$ by the definition.

When $U$ is smooth, there is another expression via the Gersten-Milnor resolution for $CH^p(U,1)$:
\[
CH^p(U,1) \cong H(\bigoplus_{cd_xZ = p-2}K^M_2(\mathbb{C}(Z)) \to \bigoplus_{cd_xZ = p-1}K^M_1(\mathbb{C}(Z)) \to \bigoplus_{cd_xZ = p}K^M_0(\mathbb{C}(Z))).
\]
Here, $K^M_p(k)$ is the $p$-th Milnor $K$-theory of a field $k$.
Since $K^M_0(\mathbb{C}(Z)) \cong \mathbb{Z}$ and $K^M_1(\mathbb{C}(Z)) = \mathbb{C}(Z)^*$, each element of $CH^p(U,1)$ can be represented by a formal sum $\sum(f_i, Z_i)$ with a codimension $(p-1)$ subvariety $Z_i$ and a rational function $f_i$ over $Z_i$ such that $\sum_i {\rm div}(f_i) = 0$.
Taking the quotient by the image of the Tame symbols, we obtain $CH^p(U,1)$.
More specifically, the graph of $f_i|_{Z_i \setminus f_i^{-1}(1)}$ as a subvariety of $U \times (\mathbb{P}^1\setminus \{1\})$ defines an element of $\mathcal{Z}^p(U),1)$.
\begin{notation}
We consider only non-torsion higher cycles in this paper. For this reason, \textbf{we use the notation $CH^p(U, n)$ for the rational coefficient higher Chow groups $CH^p(U, n)\otimes \mathbb{Q}$ from now on}.
\end{notation}

Let $X$ be a projective variety. For a subring $\mathbb{A} \subset \mathbb{R}$, the Deligne complex is defined by a complex of sheaves on $X$
\[\mathbb{A}_{\mathcal{D}}(p) \colon \mathbb{A}(p) \to \mathcal{O}_X \to \Omega^1_X \to \ldots \to \Omega^{p-1}_X.\]
Here $\mathbb{A}(p) \coloneqq \mathbb{A}(2\pi \sqrt{i})^p$. Then the \textit{Deligne cohomology} is defined by the hypercohomology
\[H^i_{\mathcal{D}}(X, \mathbb{A}(p)) \coloneqq \mathbb{H}^i(\mathbb{A}_{\mathcal{D}}(p))\]
and Bloch defined a cycle class map
\[cl^{p, n}_{\mathcal{D}} \colon CH^p(X, n) \to H^{2p-n}_{\mathcal{D}}(X, \mathbb{Q}(p)).\]

In the case of $n=0$, $cl^{p}_{\mathcal{D}} \coloneqq cl^{p, 0}_{\mathcal{D}}$ can be considered as the unified map of the usual cycle class map $cl^p$ to the Hodge class $\Hdg ^p(X)$ and the Abel-Jacobi map $AJ^p$ to the intermediate Jacobian $J^p(X)$. More precisely, there is a commutative diagram
\[
\xymatrix{
0 \ar[r] & CH^p_{{\rm hom}}(X) \ar[r]\ar[d]^{AJ^p} & CH^p(X) \ar[r]\ar[d]^{cl^{p}_{\mathcal{D}}} & CH^p(X)/CH^p_{{\rm hom}}(X, n) \ar[d]^{cl^p}\ar[r] & 0\\
0 \ar[r] & J^p(X) \ar[r] & H^{2p}_{\mathcal{D}}(X, \mathbb{Q}(p)) \ar[r] & \Hdg^p(X) \ar[r] & 0
}
\]
with exact rows.

For a quasi-projective variety $U$, we can define the higher cycle class map to the generalized Hodge class
\begin{align*}
cl^{p, n} \colon CH^p(U, n) \to \Hdg^{p,n}(U) \coloneqq & \Hdg (H^{2p-n}(U, \mathbb{Q})(p))\\
\coloneqq & \Hom_{\MHS}(\mathbb{Q}, H^{2p-n}(U, \mathbb{Q})(p))
\end{align*}
and the higher Abel-Jacobi map from $CH^p_{{\rm hom}}(U, n) (\coloneqq \Ker (cl^{p, n}))$ to the generalized intermediate Jacobian 
\[AJ^{p,n} \colon CH^p_{{\rm hom}}(U, n) \to \Ext_{\MHS}^1(\mathbb{Q}, H^{2p-n-1}(U, \mathbb{Q})(p)).\]
 By replacing the Deligne cohomology to the absolute Hodge cohomology (\cite{KL07}, Section 2), we also can define the cycle class map $cl^{p,n}_{\mathcal{H}}$ and obtain the generalization of the above commutative diagram:
\[
\xymatrix{
0 \ar[r] & CH^p_{{\rm hom}}(U,n) \ar[r]\ar[d]^{AJ^{p,n}} & CH^p(U,n) \ar[r]\ar[d]^{cl^{p,n}_{\mathcal{H}}} & CH^p(U,n)/CH^p_{{\rm hom}}(U, n) \ar[d]^{cl^{p,n}}\ar[r] & 0\\
0 \ar[r] & J^{p,n}(U) \ar[r] & H^{2p}_{\mathcal{H}}(U, \mathbb{Q}(p)) \ar[r] & \Hdg^{p,n}(U) \ar[r] & 0.
}
\]
The composition $CH^p(U,n) \to CH^p(U,n)/CH^p_{{\rm hom}}(U, n) \xrightarrow{cl^{p,n}} \Hdg^{p,n}(U)$ is also often denoted by just $cl^{p,n}$.
For a projective $X$, however, each cohomology class has the pure Hodge structure and hence $H^{2p-n}(X, \mathbb{Z})(p)$ has no weight 0 graded pieces up to torsion. Thus $\Hdg^{p,n}(X) = \{ 0 \}$ and the diagram turns into
\[
\xymatrix{
CH^p_{{\rm hom}}(X,n) \ar@{=}[r]\ar[d]^{AJ^{p,n}} & CH^p(X,n) \ar[d]^{cl^{p, n}_{\mathcal{D}}}\\
J^{p,n}(X) \ar@{=}[r] & H^{2p-n}_{\mathcal{D}}(X, \mathbb{Q}(p)).}
\]
The vanishing of the generalized Hodge class clearly shows that we cannot state the Hodge conjecture for the higher case as the surjectivity of $cl^{p, n}$.
Instead, we consider the composition of the natural surjection $H^{2p-n}_{\mathcal{D}}(X, \mathbb{Q}(p)) \to H^{2p-n}_{\mathcal{D}}(X, \mathbb{R}(p))$ after $cl^{p, n}_{\mathcal{D}}$, which is called the \textit{real regulator map}
\[r_\mathcal{D}^{p,n} \colon CH^p(X,n) \to H^{2p-n}_{\mathcal{D}}(X, \mathbb{R}(p)).\]
Then a version of Beilinson's Hodge-$\mathcal{D}$-Conjecture is
\begin{conj}[Hodge-$\mathcal{D}$-Conjecture]
For a smooth projective variety $X$ over $\bar{\mathbb{Q}}$,
\[r_{\mathcal{D}, \mathbb{R}}^{p,n} \coloneqq r_\mathcal{D}^{p,n}\otimes \mathbb{R} \colon CH^p(X,n)\otimes \mathbb{R} \to H^{2p-n}_{\mathcal{D}}(X, \mathbb{R}(p))\]
is surjective.
\end{conj}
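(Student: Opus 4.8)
The general conjecture is false for arbitrary smooth varieties and open over $\bar{\mathbb{Q}}$, so what I would actually establish is the case $(p,n) = (2,1)$ for the explicit family of quartic $K3$ surfaces $X_t$ of Section \ref{Construction of Families of Higher Cycles} (this is Theorem \ref{Hodge-D-Conjecture}). For a $K3$ surface the target is
\[
H^3_{\mathcal{D}}(X_t, \mathbb{R}(2)) \cong H^{1,1}(X_t, \mathbb{R}),
\]
a real vector space of dimension $20$, and the plan is to show that the regulator images of the three families $\{\gamma_{ijk,l}\}$, $\{\delta_{i,lmn}\}$, $\{\lambda_{il}\}$ already span it. The decomposable cycles $\lambda_{il}$, built from the lines $L_i \cap M_l$, account for the algebraic (Néron--Severi) part $NS(X_t)_{\mathbb{R}}$, so the crux is to prove that the indecomposable cycles $\gamma$ and $\delta$ fill the transcendental complement of $H^{1,1}(X_t,\mathbb{R})$.

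Rather than evaluate the regulator directly on the fixed surface, I would spread the cycles over $\Delta^*$ and pass to the associated admissible normal functions for the semistable degeneration $X_t \rightsquigarrow X_0$ to the plane arrangement. The first main step (Theorem \ref{main thm}) is to compute, via the KLM formula together with the combinatorial incidence data of the arrangement $X_0 = \bigcup \{L_i = 0\}$, the singularity of each $\gamma_{ijk,l}$ at $t=0$; I expect it to be a non-zero class in $\Hdg(\Coker N)$, and then an explicit linear-algebra check should show that $\{\gamma_{ijk,l}\} \cup \{\lambda_{il}\}$ surjects onto $\Hdg(\Coker N)$. Non-vanishing of a singularity is exactly what forces the underlying cycle on the nearby fiber to be $\mathbb{R}$-regulator indecomposable, which also supplies the input needed for the Griffiths-group application of Section \ref{Application: Threefold with Non-Trivial Griffiths Groups}.

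The cycles $\delta_{i,lmn}$ have vanishing singularity, so they contribute only to the part of the target that is invisible to the singularity and is governed by the finer limiting mixed Hodge structure. For these I would instead compute the limit invariant of the normal function, once more by applying the KLM formula on the degeneration, and show (Theorem \ref{limit invariant}) that it is non-zero precisely when $d=4$; the numerical coincidence at $d=4$ is what I expect to make the relevant period integrals survive in the limit. The final step is the passage from boundary invariants back to the generic fiber: using the relationship between $H^3_{\mathcal{D}}(X_t,\mathbb{R}(2))$, the singularity target $\Hdg(\Coker N)$, and the limiting mixed Hodge structure attached to $X_0$, a dimension count should show that surjectivity onto $\Hdg(\Coker N)$ together with the non-trivial limits of the $\delta_{i,lmn}$ forces the combined regulator images to span all of $H^{1,1}(X_t,\mathbb{R})$ for general $t$.

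The step I expect to be hardest is this last one, controlling the full $20$-dimensional target rather than the two invariants in isolation. Singularities and limits are only the associated-graded data of the normal function with respect to the monodromy weight filtration, so their non-triviality guarantees a priori only that the regulator image meets each graded piece; promoting this to genuine surjectivity on $X_t$ requires a semicontinuity argument that rules out cancellation between the pieces and verifies that no direction in $H^{1,1}(X_t,\mathbb{R})$ escapes detection by either invariant. Making the arrangement generic enough for the KLM and period computations to be linearly independent, and confirming that $d=4$ is exactly the threshold at which the limit computation succeeds, is where the genuine work lies.
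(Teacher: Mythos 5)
Your overall plan coincides with the paper's: compute singularities of the $\gamma_{ijk,l}$ and $\lambda_{il}$ to span $\Hdg(\Coker N)$ (Theorem \ref{main thm}), compute the limit invariant of the $\delta_{i,lmn}$ at $d=4$ (Theorem \ref{limit invariant}), and combine the two to span the $20$-dimensional target. But the step you defer — ``a dimension count should show\ldots'' together with an unspecified ``semicontinuity argument that rules out cancellation'' — is precisely where the content of the paper's proof of Theorem \ref{Hodge-D-Conjecture} lives, and as stated your proposal has a genuine gap there: singularities and limits live in $\Hdg(\Coker N)$ and $J_{lim}$ respectively, not in $H^{1,1}_{\mathbb{R}}(X_t)(1)$, and no abstract dimension count transports linear independence of these boundary invariants back to linear independence of the regulator classes on the nearby fiber. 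The paper closes this gap with an explicit asymptotic pairing computation: it builds a single-valued frame $\{e_0,e_1,e_2,d_1,\ldots,d_{19}\}$ of the Deligne extension adapted to the weight filtration (with $e_1=Ne_2$, $e_0=Ne_1$ after a coordinate change), shows that any real $(1,1)$-class $\eta$ independent of the $d_i$ must have the form $e_2+i\Im(l)e_1+\kappa(t)$ with $\Im(l)=-\frac{\log|t|}{2\pi}$, and then writes the normal function of $\delta_{i,lmn}$ as $R(t)=iLe_0+t(\cdots)$ and the nineteen normal functions with $sing_0(R_i)=d_i$ as $R_i(t)=\cdots+i\log(t)d_i+\cdots$, where admissibility (via Schmid/Steenbrink--Zucker) forces the remaining coefficients to be holomorphic. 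Pairing the imaginary parts against $\eta$ and the $d_j$ under the polarization $Q$, with the crucial rescaling of $R_i$ by $\frac{1}{\log t}$, yields in the limit $t\to 0$ a matrix that is triangular with diagonal $(-L,1,\ldots,1)$; its non-vanishing determinant is what rules out cancellation. The differing growth rates (bounded for $R$, logarithmic for the $R_i$) are the actual mechanism separating the limit contribution from the singularity contributions — this is the argument you would need to supply.

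Two smaller inaccuracies: non-vanishing of the singularity alone does \emph{not} force $\mathbb{R}$-regulator indecomposability — the decomposable cycles $\lambda_{il}$ themselves have non-trivial singularities; what the paper uses (Corollary \ref{gamma is indecomposable cycle.}) is that the singularity of $\gamma_{ijk,l}$ is linearly independent of the span of the singularities of $\mathscr{D}$. Also, the paper does not treat the $\lambda_{il}$ as simply ``accounting for $NS(X_t)_{\mathbb{R}}$''; both $\mathscr{D}$ and $\mathscr{I}_0$ are fed jointly into the construction of the nineteen classes $d_i$, rather than splitting the target into algebraic and transcendental pieces handled by separate families.
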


Note that $H_{\mathcal{D}}^{2p-1}(X, \mathbb{R}(p)) \cong H^{p-1, p-1}_{\mathbb{R}}(X)(p-1) \coloneqq H^{p-1,p-1}(X, \mathbb{R}) \otimes \mathbb{R}(p-1)$ (\cite{CL05}, Section 3).

\begin{remark}
	The same statement for quasi-projective varieties over $\mathbb{C}$ is known to be false. See \cite{MS97}.
\end{remark}

The higher Chow groups have a product structure
\[CH^p(X, n) \otimes CH^q(X, m) \to CH^{p+q}(X, n+m)\]
which is compatible with the cup products in the Deligne cohomology and the real regulator map.
Since it is known that $CH^1(X,1) \cong H^1_{\mathcal{D}}(X,\mathbb{Q}(1)) \cong  \mathbb{C}^*$, especially we obtain a map
\begin{equation}
\label{dec}
\mathbb{C}^* \otimes CH^{p-1} \to CH^p(X,1).
\end{equation}
The image $CH^p_{\dec}(X, 1)$ of the above map (\ref{dec}) is called the \textit{subgroup of the decomposable cycles} and
the \textit{group of indecomposable cycles} is defined by the quotient
\[CH^p_{\ind}(X, 1) \coloneqq CH^p(X,1) / CH^p_{\dec}(X, 1).\]
If especially the real regulator image $r_{\mathcal{D}, \mathbb{R}}^{p,1}(\gamma)$ of an element $\gamma \in CH^p(X,1)$ is not in the image ${ \rm Im} (H^1_{\mathcal{D}}(X, \mathbb{R}(1)) \otimes H^{2p-2}_{\mathcal{D}}(X, \mathbb{R}(p-1)) \xrightarrow{\mu} H^{2p-1}_{\mathcal{D}}(X, \mathbb{R}(p))) \cong \mathbb{R} \otimes \Hdg^{p-1}(X)$, we say that $\gamma$ is \textit{$\mathbb{R}$-regulator indecomposable}. Clearly $\mathbb{R}$-regulator indecomposable cycles are indecomposable.

The Deligne cohomology $H^{2p-n}_{\mathcal{D}}(X, \mathbb{A}(p))$ can be also defined as the $(-r)$-th cohomology of the Deligne cohomology complex 
\[\mathcal{M}^{\bullet} \coloneqq Cone\{\mathcal{C}_X^{2p + \bullet}(X, \mathbb{A}(p)) \oplus F^p\mathcal{D}^{2p + \bullet}_X(X) \xrightarrow{\epsilon - l} \mathcal{D}^{2p + \bullet}_X(X) \}[-1]\]
with the sheaves of topological chains and distributions on $X$.
Here, $\epsilon$ maps to the associated current and $l$ is the natural embedding.
On the other hand, the complex of precycles $\mathcal{Z}^p(X, \bullet )$ has a subcomplex $\mathcal{Z}^p_{\mathbb{R}}(X,\bullet )$ of cycles meeting real faces properly such that the inclusion is a (rational) quasi-isomorphism. The \textit{KLM-formula} \cite{KLMS06} is  a map of complexes $\mathcal{Z}^p(X, -\bullet ) \to \mathcal{M}^{\bullet}$ defined by 
\[Z \to (2 \pi i)^{p-n}((2 \pi i)^{n}T_{Z}, \Omega_Z, R_Z),\]
and inducing $AJ^{p,n}$.
 Here, each of $T_Z, \Omega_Z, R_Z $ is essentially defined by the pushforward-pull back image of the following current on $ \square^r \coloneqq  ( \mathbb{P}^1 \setminus \{ 1 \})^r$ respectively:
	\[T_r \coloneqq  (2 \pi i)^r \delta_{[- \infty, 0]^r}\]
	\[\Omega_r \coloneqq  \int_{\square^r} \wedge^r_{k=1}d \log z_k\]
\begin{multline*}
	R_r \coloneqq  \int_{\square^r} \log z_1 \wedge^r_{k=2} d \log z_k - (2 \pi i) \int_{[- \infty, 0] \times \square^{r-1}} \log z_2 \wedge_{k=3}^r d \log z_k \\
		 + \ldots + (- 2\pi i)^r \int_{[- \infty, 0]^{r-1} \times \square^1} d \log z_r.
\end{multline*}
When $T_Z = \partial\Gamma$ and $\Omega_Z = d\Xi$, by adding the differential $D((2 \pi i)^n \Gamma, \Xi, 0 ) = (-(2 \pi i )^nT_Z, - \Omega_Z, -\Xi + (2 \pi i)^n \delta_{\Gamma})$ we can simplify the formula.
Especially when $d \coloneqq \mbox{dim} X \le p$ or $p \le n$, since $F^pD^{2p-n}(X)$ vanishes and hence $\Omega_Z$ is trivial, we obtain
\begin{align*}
	AJ^{p,n}(Z)(\omega) &= (-2 \pi i )^{p-n}(R_Z+ (2\pi i )^n \delta_{\Gamma})(\omega) \\
&= \frac{1}{(- 2\pi i)^{n-p}} \left( \int_X R_Z \wedge \omega + (2\pi i )^n\int_{\Gamma}\omega \right)
\end{align*}
for each closed test form $\omega$ in $F^{d -p+1}\Omega^{2d-2p+n+1}(X)$, yielding a class in $J^{p,n}(X) \cong \{F^{d-p+1}H^{2d-2p + n +1}(X, \mathbb{C})\}^{\vee} / H_{2d-2p+n+1}(X, \mathbb{Q}(p))$.

More generally, the KLM formula holds for a smooth quasi-projective $U$, and even for a normal crossing divisor $Y$ on $X$ by changing each complex appearing in the formula to the simple complex associated to a certain double complex (See Section \ref{Higher Chow Cycles with Non-trivial Limits}). Especially it defines the cycle map
\[cl^{p-1,n-1}_{\mathcal{D}} \colon CH^{p-1}(Y,n-1) \to H^{2p-n+1}_{\mathcal{D},Y}(X, \mathbb{Q}(p))\]
and hence
\[cl^{p-1,n-1} \colon CH^{p-1}(Y,n-1) \to \Hdg (H^{2p-n+1}_{Y}(X, \mathbb{Q}(p)))\]
 for the cohomologies with support on $Y$. For the detail of the construction, see Section 5.9 of \cite{KLMS06} and Section 3 of \cite{KL07}.

%%%%%%%%%%%%%%%%%%%%%%%%%%%%%%%%%%%%%%%%%%%%%%%%%

\section{Construction of Families of Higher Cycles}
\label{Construction of Families of Higher Cycles}
In this section, we consider a certain family of degree $d$ surfaces $\mathcal{X}$ in $\mathbb{P}^3$ of a general form. 
Over this type of family, we can construct a family of higher cycles in $CH^2(X_t,1)$  for the general fiber $X_t$.
We classify these elements into families of decomposable cycles $\mathscr{D}$ and other two types of families of higher cycles $\mathscr{I}_0, \mathscr{I}_{\infty}$, which are $\mathbb{R}$-regulator indecomposable when $d = 4$.
In the case of quartic surfaces, in later sections we will prove the Hodge-$\mathcal{D}$-conjecture for a general fiber $X_t$ of $\mathcal{X}$ by showing that the images of $\mathscr{I}_0$ and $\mathscr{I}_{\infty}$ by $r_{\mathcal{D},\mathbb{R}{}}^{2,1}$ span the regulator indecomposable cycles $\Coker (\mu) \cong H^{1,1}_{tr}(X, \mathbb{R}(1))$.

Let  $L_i$ $(1 \leq i \leq d)$ and $M_l$ $(1 \leq l \leq d)$ be linear forms in $\mathbb{P}^3$ in general position.

We use the same notations $L_i$ and $M_l$ for the hyperplane $V(L_i)$ and $V(M_l)$ in $\mathbb{P}^3$ defined by the zero locus of these linear forms respectively.
Define a flat family of degree $d$ surfaces $\mathcal{X}$ over $\mathbb{P}^1$ by
\[ \mathcal{X} \coloneqq \{X_t \colon L_1L_2 \cdots L_d + tM_1M_2 \cdots M_d = 0 \}_{t \in \mathbb{P}^1} \subset \mathbb{P}^3 \times \mathbb{P}^1.\]
Its general fiber $X_t$ is smooth, and $X_0 = (L_1L_2 \cdots L_d = 0)$ is a simple normal crossing divisor on $\mathcal{X}$.
In fact, each point of $X_0$ has an analytic neighborhood with coordinates such that $X_t$ is defined by the equation $xy + tz = 0$ or simpler (the same holds for $X_{\infty}$).
Near $X_0$, this local equation also shows that the singular loci of the total family $\mathcal{X}$ are given by $d\binom{d}{2}$ nodes defined by
$p^{ij}_l \coloneqq L_i \cap L_j \cap M_l$.
We denote the projection  to the parameter $t$ by $\pi \colon \mathcal{X} \to \mathbb{P}^1$ and also define
$S \coloneqq \mathbb{P}^1 \setminus ({\rm discriminant\  locus})$ and $\mathcal{X}^* \coloneqq \pi^{-1}(S)$.
We write $\mathcal{L}_i, \mathcal{M}_l \subset \mathcal{X}$ for the constant families of planes defined by $L_i$ and $M_l$ respectively.
The base locus $B$ of this family $\mathcal{X}$ is obtained by
\[B = \bigcup_{1 \le i \le d}{B_i} \qquad　(B_i \coloneqq \bigcup_{1 \le l \le d} L_i \cap M_l),\]
hence $B$ includes no singular points on $X_t$.

We start by constructing some decomposable cycles.
Recall that each element of $CH^2(X,1)$ can be represented by a formal sum of pairs of divisors and rational functions over them such that the sum of their zeros and poles vanishes.
Take a constant family of lines $\mathcal{L}_i \cap \mathcal{M}_l$ as a divisor of $\mathcal{X}^*$.
Since $\mathcal{X}^*$ does not include either $X_0$ or $X_{\infty}$, the projection $\pi$ is an invertible function over $\mathcal{X}^*$.
Hence its restriction $\pi|_{\mathcal{L}_i \cap \mathcal{M}_l}$ defines an element of $\mathbb{C}^*$ via the identification $\mathcal{O}^*_{\mathbb{P}^1}(\mathbb{P}^1) \cong \mathbb{C}^*$.
Thus the pair $(\pi|_{\mathcal{L}_i \cap \mathcal{M}_l} , \mathcal{L}_i \cap \mathcal{M}_l)$ defines a family $\lambda_{il} \in CH^2(\mathcal{X}^*, 1)$ of decomposable cycles via the map (\ref{dec}). We define 
\[\mathscr{D} \coloneqq \{\lambda_{il} \mid 1 \le i, l \le d\}.\]

Next we define $\mathscr{I}_0$. Take three planes $L_i, L_j, L_k (1 \le i < j < k \le d)$ and another one $M_l$. For  each $\alpha \in \{i, j, k\}$, again we take the divisor $\mathcal{L}_{\alpha} \cap \mathcal{M}_l$, but for the rational function we take an isomorphism $\phi_{\alpha l} \colon L_{\alpha} \cap M_l \xrightarrow{\cong} \mathbb{P}^1$ on each $t \in \mathbb{P}^1$
	defined by
	\[
	\phi_{\alpha l} = \frac{L_{\sigma (\alpha)}}{L_{\sigma^2 (\alpha)}}\big|_{M_l}.
	\]
Here, $\sigma \in \mathfrak{S}_3$ is the cyclic permutation defined by
$\begin{pmatrix}
i & j & k \\
j & k & i
\end{pmatrix}
$.
Hence $\phi_{\alpha l}^{-1}(0) = L_{\alpha} \cap L_{\sigma (\alpha)} \cap M_{l}$ and $\phi_{\alpha l}^{-1}(\infty) = L_{\alpha} \cap L_{\sigma^2 (\alpha)} \cap M_{l}$.
We use the same notation $\phi_{\alpha l}$ for the rational function over $\mathcal{X}$ defined by $\phi_{\alpha l}$ constantly with respect to $t$.
Then we obtain a precycle
\[\Gamma_{\alpha l} \coloneqq (\phi_{\alpha l}, \mathcal{L}_{\alpha} \cap \mathcal{M}_l) \in \mathcal{Z}^2(\mathcal{X}^*, 1).\]
By the definition of $\phi_{\alpha l}$, $\partial (\Gamma_{\alpha l})$ is the divisor $ [\mathcal{L}_i \cap \mathcal{L}_{\sigma (\alpha)}] - [\mathcal{L}_i \cap \mathcal{L}_{\sigma^2 (\alpha)}]$.
Hence the precycle
\[\gamma_{ijk,l} \coloneqq \Gamma_{i l} + \Gamma_{j l} + \Gamma_{k l}\]
satisfies $\partial(\gamma_{ijk,l}) = 0$.
Thus we obtain a higher cycle $\gamma_{ijk,l} \in CH^2(\mathcal{X}^*, 1)$.
We use the same notation $\gamma_{ijk,l} \in CH^2(X_t, 1)$ for each fiber at $t=0$ (Figure 1).
We define
\[\mathscr{I}_0 \coloneqq \{\gamma_{ijk, l} \mid 1 \le i < j < k \le d, 1 \le l \le d \}.\]

Finally, $\mathscr{I}_{\infty}$ is defined by changing $L$ and $M$ in the above construction of $\mathscr{I}_0$.
Specifically, for three planes $M_l, M_m, M_n$ and $L_i$ and for each $\beta \in \{l, m, n\}$,
 we take an isomorphism $\psi_{i\beta} \colon L_i \cap M_{\beta} \xrightarrow{\cong} \mathbb{P}^1$
 such that $\psi_{i \beta}^{-1}(0) = L_i \cap M_{\beta} \cap M_{\sigma (\beta)}$ and $\psi_{i \beta}^{-1}(\infty) = L_i \cap M_{\beta} \cap  M_{\sigma^2 (\beta)}$.
 Then it defines a precycle $\Gamma'_{i\beta} \coloneqq (\psi_{i\beta}, \mathcal{L}_i \cap \mathcal{M}_{\beta})$ and we can see that
 \[\delta_{i, lmn} \coloneqq \Gamma'_{il} + \Gamma'_{im} + \Gamma'_{in}\]
is also an element of $CH^2(\mathcal{X}^*, 1)$.
We define
\[\mathscr{I}_{\infty} \coloneqq \{\delta_{i, lmn} \mid 1 \le i \le d, 1 \le l < m < n \le d \}.\]

\begin{figure}[H]
  \centering
  \includegraphics[width=10cm]{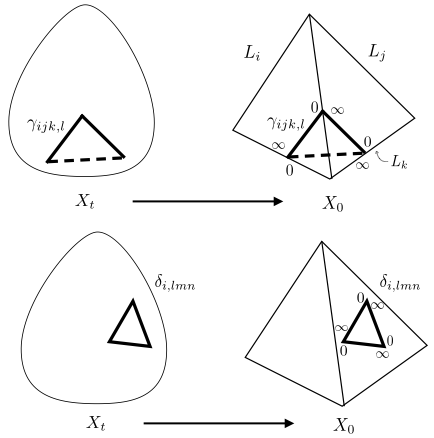}
    \caption{Higher Chow cycle $\gamma_{ijk,l}$ and $\delta_{i,lmn}$}
\end{figure}

%%%%%%%%%%%%%%%%%%%%%%%%%%%%%%%%%%%%%%%%%%%%%%%%%

\section{Singularities and limits of Normal Functions
\label{Singularities and limits of Normal Functions}}
In this section, we review two invariants of normal functions which are called the singularity and limit. When we obtain a family of higher cycles over a family of projective varieties, the Abel-Jacobi map defines the corresponding admissible normal function.
When the family is a semistable degeneration we can observe its singularity by using the Clemens-Schmid exact sequence. If the normal function has the trivial singularity, then we obtain its limit value in the limiting Jacobian.

Let $\overline{S}$ be a complex manifold and $j \colon S \to \overline{S}$ be an open immersion of a Zariski open subset $S$.
For a variation of Hodge structure $\mathcal{H}$, its generalized Jacobian bundle is defined by
\[J(\mathcal{H}) \coloneqq \frac{\mathcal{H}}{\mathcal{F}^0\mathcal{H} + \mathbb{H}_{\mathbb{Q}}}.\]
A holomorphic horizontal section of $J(\mathcal{H})$ is called a \textit{$J(\mathcal{H})$-valued normal function} over $S$.
The group of $J(\mathcal{H})$-valued normal functions $NF(S, \mathcal{H})$ is canonically isomorphic to 
$\Ext^1_{\VMHS (S)}(\mathbb{Z}, \mathcal{H})$ with the category $\VMHS (S)$ of variations of mixed Hodge structures over $S$. Moreover, $\VMHS (S)$ contains the subcategory $\VMHS (S)^{\rm ad}_{\overline{S}}$ of admissible variations of Hodge structures (\cite{Sa96}).
An element of the subgroup $\Ext ^1_{\VMHS (S)^{\rm ad}_{\overline{S}}}(\mathbb{Z}, \mathcal{H})$ of $\Ext ^1_{\VMHS (S)}(\mathbb{Z}, \mathcal{H})$ is called an \textit{admissible normal function} with respect to $\overline{S}$.
By Section 2 of \cite{BFNP09}, the group of admissible normal functions $NF(S, \mathcal{H})^{\rm ad}_{\overline{S}}\otimes \mathbb{Q}$ with rational coefficients is isomorphic to $\Ext ^1_{\MHM (S)^{ps}_{\overline{S}}}(\mathbb{Q}, \mathcal{H})$.
Here, $\MHM (S)^{ps}_{\overline{S}}$  is the category of smooth polarizable mixed Hodge modules over $S$.

Let $\nu$ be an admissible normal function over $S$ and $\iota_s$ be the embedding of a point $s \in \overline{S}$.
We define a map $sing_s$ by the composition 
\begin{align*}
NF(S, \mathcal{H})^{\rm ad}_{\overline{S}}\otimes \mathbb{Q} \cong 
&\Ext ^1_{\MHM (S)^{ps}_{\overline{S}}}(\mathbb{Q}, \mathcal{H})\\
\xrightarrow{(\iota_s^*Rj_*)^{\Hdg}} 
&\Ext ^1_{D^b\MHM (\{s\})}(\mathbb{Q}, \iota_s^*Rj_*\mathcal{H})\\
\cong
&\Ext ^1_{D^b\MHS}(\mathbb{Q}, \iota_s^*Rj_*\mathcal{H}) \\
\to
&\Hom_{\MHS}(\mathbb{Q}, H^1((\iota_s^*Rj_*)\mathcal{H})).
\end{align*}
The invariant $sing_s(\nu)$ is called the \textit{singularity of the normal function} $\nu$ at $s$.
From the spectral sequence for the cohomology functor and $\Hom_{\MHS}(\mathbb{Q}, -)$, we also obtain a natural map $lim_s \colon \Ker (sing_s) \to \Ext^1_{\MHS}(\mathbb{Q}, H^0(\iota_s^*Rj_*\mathcal{H}))$ which makes the following commutative diagram:
\[
\xymatrix@M=7pt{
& 0\\
 & \Hom_{\MHS}(\mathbb{Q}, H^1((\iota_s^*Rj_*)\mathcal{H}))\ar[u]\\
 NF(S, \mathcal{H})^{\rm ad}_{\overline{S}}\otimes \mathbb{Q} \ar[ru]^-{sing_s}\ar[r]^-{(\iota_s^*Rj_*)^{\Hdg}} & \Ext ^1_{D^b\MHS}(\mathbb{Q}, \iota_s^*Rj_*\mathcal{H})\ar[u] \\
 \Ker (sing_s) \ar[u]\ar[r]^-{lim_s}& \Ext^1_{\MHS}(\mathbb{Q}, H^0(\iota_s^*Rj_*\mathcal{H})) \ar[u]\\
 & 0 \ar[u]
}
\]

We can apply the above theory of admissible normal functions to a family of higher cycles on smooth projective varieties, because of the following result of Brylinski and Zucker:
Let $f \colon \mathfrak{X}^* \to S$ be a smooth proper family of quasi-projective varieties.
A higher cycle
\[\mathfrak{Z}^* \in CH^p(\mathfrak{X}^*,n)_{prim} \coloneqq \bigcap_{x \in S}\Ker (CH^p(\mathfrak{X}^*, n) \to CH^p(X_x, n) \to \Hdg^{p,n}(X_x))\]
 defines a holomorphic section $\nu_{\mathfrak{Z^*}}$ of $J(\mathcal{H}^{p,n})$ for $\mathcal{H}^{p,n} \coloneqq R^{2p-n-1}f_*\mathbb{Q}(p) \otimes \mathcal{O}_S$ by taking the fiberwise Abel-Jacobi values.
\begin{thm}\cite{BZ90}
\label{Zucker}
$\nu_{\mathfrak{Z}^*}$ is an admissible normal function.
\end{thm}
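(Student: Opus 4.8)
The plan is to reduce the admissibility of $\nu_{\mathfrak{Z}^*}$ to Saito's theory of mixed Hodge modules, exploiting the equivalence recalled just above between the category $\MHM(S)^{ps}_{\overline{S}}$ of smooth polarizable mixed Hodge modules and the category $\VMHS(S)^{\rm ad}_{\overline{S}}$ of admissible variations. An admissible normal function with respect to $\overline{S}$ is, after $\otimes\mathbb{Q}$, precisely an extension class in $\Ext^1_{\MHM(S)^{ps}_{\overline{S}}}(\mathbb{Q}, \mathcal{H}^{p,n})$, so it suffices to exhibit $\nu_{\mathfrak{Z}^*}$ as such a class, i.e.\ to show that the extension of variations underlying the fibrewise Abel--Jacobi section actually comes from a mixed Hodge module on $S$. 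Once this is done admissibility is automatic, together with horizontality and holomorphy, since every object of $\MHM(S)$ that is smooth on $S$ restricts to an admissible VMHS; this is the content of the isomorphism cited from \cite{BFNP09}.

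To produce the extension class I would pass through the regulator on the total space. The higher cycle $\mathfrak{Z}^*$ has an absolute Hodge class $cl_{\mathcal{H}}(\mathfrak{Z}^*) \in H^{2p-n}_{\mathcal{H}}(\mathfrak{X}^*, \mathbb{Q}(p)) = \Hom_{D^b\MHM(\mathrm{pt})}(\mathbb{Q}, a_*\mathbb{Q}^H_{\mathfrak{X}^*}(p)[2p-n])$, where $a = b \circ f$ and $b \colon S \to \mathrm{pt}$. Writing $a_* = b_* f_*$ and invoking Saito's direct image theorem, $f_*\mathbb{Q}^H_{\mathfrak{X}^*}$ is a complex of polarizable mixed Hodge modules on $S$ whose cohomology sheaves are the pure variations $R^q f_* \mathbb{Q}(p)$; the relevant one is $\mathcal{H}^{p,n} = R^{2p-n-1}f_*\mathbb{Q}(p)$. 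The Leray spectral sequence for $b_*$, of the form $\Ext^i_{\MHM(S)}(\mathbb{Q}, R^{2p-n-i}f_*\mathbb{Q}(p))$, then filters $cl_{\mathcal{H}}(\mathfrak{Z}^*)$: the primitivity hypothesis $\mathfrak{Z}^* \in CH^p(\mathfrak{X}^*,n)_{prim}$ kills the $i=0$ term (the fibrewise Hodge class $\Hom_{\MHM(S)}(\mathbb{Q}, R^{2p-n}f_*\mathbb{Q}(p))$), so the class refines to the $i=1$ line and defines an element of $\Ext^1_{\MHM(S)}(\mathbb{Q}, \mathcal{H}^{p,n})$. A final check, using the KLM description of $AJ^{p,n}$ recalled in Section \ref{Higher Chow Groups and Hodge-D-Conjecture}, identifies the fibrewise restriction of this extension with the pointwise Abel--Jacobi values, so that it coincides with $\nu_{\mathfrak{Z}^*}$.

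The hard part will be exactly this descent step: showing that the total-space regulator class genuinely refines to a relative extension class over $S$ in the mixed Hodge module category, with the correct graded pieces, and that the resulting section agrees fibrewise with $\nu_{\mathfrak{Z}^*}$. This needs the degeneration behaviour of the Hodge-theoretic Leray spectral sequence for $f_*$ together with the compatibility of the regulator with the six-functor formalism; the higher case $n>0$ adds bookkeeping, since $\mathfrak{Z}^*$ is represented through the cubical complex $\mathcal{Z}^p(\mathfrak{X}^*,\bullet)$ (or via the Gersten--Milnor resolution), but the abstract argument is uniform in $n$. An alternative, more hands-on route would verify the Steenbrink--Zucker/Kashiwara admissibility conditions directly: the Hodge filtration on the pure part $\mathcal{H}^{p,n}$ extends to $\overline{S}$ by Schmid's nilpotent orbit theorem, and since the weight filtration of the extension has only the two graded pieces $\mathcal{H}^{p,n}$ (weight $-n-1$) and $\mathbb{Q}$ (weight $0$), the relative monodromy weight filtration $M(N,W)$ for each local monodromy logarithm $N$ exists. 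On this route the delicate point would instead be the boundedness and extendability of the Abel--Jacobi section near the boundary of $\overline{S}$, i.e.\ controlling the growth of $\nu_{\mathfrak{Z}^*}$; the geometric origin of the cycle is precisely what guarantees this, which is the substance of the theorem of Brylinski--Zucker \cite{BZ90}.
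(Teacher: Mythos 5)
The first thing to note is that the paper contains no proof of this statement: Theorem \ref{Zucker} is imported as a black box from Brylinski--Zucker \cite{BZ90}, so there is no internal argument to compare yours against. Judged on its own merits, your sketch follows the standard modern mixed-Hodge-module route: the absolute Hodge class of $\mathfrak{Z}^*$ on the total space, refined through the Hodge-theoretic Leray spectral sequence into a class in $\Ext^1_{\MHM(S)}(\mathbb{Q},\mathcal{H}^{p,n})$, then converted into an admissible normal function via the identification $NF(S,\mathcal{H})^{\rm ad}_{\overline{S}}\otimes\mathbb{Q}\cong\Ext^1_{\MHM(S)^{ps}_{\overline{S}}}(\mathbb{Q},\mathcal{H})$ recalled from \cite{BFNP09}. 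The architecture is sound: the spectral sequence degenerates at $E_2$ because $f$ is smooth proper (decomposition theorem), so your class in $\Ext^1_{\MHM(S)}(\mathbb{Q},\mathcal{H}^{p,n})$ is well defined once the $E_2^{0,2p-n}$-term dies; and it does die, since the image there is a flat section of a variation whose value at each $x\in S$ is the fibrewise class in $\Hdg^{p,n}(X_x)$, which vanishes by primitivity, and a flat section vanishing fibrewise vanishes identically. The step you treat as implicit but which carries real weight is Saito's theorem \cite{Sa96} that smooth (algebraic) polarizable mixed Hodge modules on $S$ restrict to VMHS admissible relative to $\overline{S}$; this is precisely what makes ``extension class in $\MHM(S)$'' equivalent to ``admissible,'' with no separate boundary analysis needed. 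You correctly identify the two genuine gaps --- the fibrewise identification of the Leray-refined class with the KLM Abel--Jacobi values (this compatibility is a theorem in its own right, in the spirit of \cite{KL07}, \cite{KLMS06}), and, on your alternative route, the growth control of $\nu_{\mathfrak{Z}^*}$ near $\overline{S}\setminus S$ --- and these are exactly the points that the cited literature supplies. For the record, the original argument of \cite{BZ90} is closer to your second, hands-on route: the extension of $\mathbb{Q}$ by $\mathcal{H}^{p,n}$ attached to a cycle of geometric origin is itself geometric, and geometric variations satisfy the Steenbrink--Zucker (Kashiwara, in higher dimension) admissibility conditions. So your proposal is a legitimate reconstruction, but of the modern proof rather than the one the citation actually points to.
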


If $\mathfrak{X}^*$ is the restriction of a proper family $\mathfrak{X}$ over $\overline{S}$ to $S$ and $\mathfrak{Z}^*$ is that of a family of higher cycle $\mathfrak{Z} \in CH^p(\mathfrak{X}, n)$,
with the complement $\mathfrak{X}_{sing} \coloneqq \mathfrak{X} \setminus \mathfrak{X}^*$, we obtain the localization exact sequence
\[\cdots \to CH^p(\mathfrak{X}_{sing}, n) \to CH^p(\mathfrak{X}, n) \to CH^p(\mathfrak{X}^*, n) \xrightarrow{res} CH^{p-1}(\mathfrak{X}_{sing}, n-1) \to \cdots.\]
Here, the morphism $res$ is defined by Bloch's moving lemma. In fact, for each $\gamma \in CH^p(\mathfrak{X}^*, n)$ this lemma guarantees that there exists a precycle $\Gamma \in \mathcal{Z}^p(\mathfrak{X}, n)$ such that its restriction to $\mathfrak{X}^*$ is a higher cycle with the same class to $\gamma$. We can see that $res(\gamma) \coloneqq \partial{\Gamma}$ is actually in $\mathfrak{X}_{sing}$.

Now we consider the special case that $\mathfrak{X}$ is a one-parameter semistable degeneration of $d$ dimensional projective varieties.
It means that $\overline{S}$ is a projective curve and each singular fiber $X_{s_0} \subset \mathfrak{X}_{sing}$ is a reduced simple normal crossing divisor.
Take a point in the discriminant locus $s_0 \in \overline{S} \setminus S$ and let $\Delta \subset \overline{S}$ be the unit disk in a local coordinate of $\overline{S}$ with the origin $s_0$.
By changing the coordinate of $S$ if we need, we may assume that $X_{s_0}$ is the unique singular fiber in  the restriction $\mathfrak{X}|_{\Delta}$.

The upper half plane $\mathfrak{H}$ can be considered the universal cover of $\Delta^* \coloneqq \Delta \setminus \{ 0 \}$.
With the base change $\mathfrak{X}_{\mathfrak{H}} \coloneqq \mathfrak{X}|_{\Delta} \times_{\Delta^*} \mathfrak{H}$, we obtain the commutative specialization diagram
\[
\xymatrix{
\mathfrak{X}_{\mathfrak{H}} \ar[r]^k\ar[d]  & \mathfrak{X}|_{\Delta} \ar[d]  &  X _{s_0}\ar[l]_i\ar[d]\\
\mathfrak{H}\ar[r]  &  \Delta  &  \{0\}.\ar[l]
}
\]
Since $\mathfrak{X}_{\mathfrak{H}}$ is homotopic to any general fiber $X_t$ $(t \neq 0)$, we can define the specialization map
\[sp \colon H^k(X_{s_0},  \mathbb{Q}) \to H^k(\mathfrak{X}_{\mathfrak{H}},  \mathbb{Q}) \cong  H^k(X_t, \mathbb{Q})\]
induced by the adjoint morphism $\mathbb{Q}_{X_{s_0}} \to  i^*Rk_*k^*\mathbb{Q}_{\mathfrak{X}|_{\Delta}}$.
We remark that originally this map is defined analytically by Clemens' retraction $\mathfrak{X}|_{\Delta} \to X_{s_0}$ , but generally this retraction is not holomorphic (\cite{Cl77}.
Also, since the local monodromy $T$ around $s_0$ is unipotent, it defines the log monodromy action 
\[N_{s_0} \coloneqq \sum_{l =1}\frac{(-1)^{l-1}}{l}(T-I)^l \colon H^k(X_{t},  \mathbb{Q}) \to H^k(X_{t},  \mathbb{Q}).\]

Generally, for any given nilpotent operator $N \colon H \to H$ on a finite dimensional vector space $H$ and a fixed integer $k$, there exists a unique increasing filtration $W_{\bullet}$ on $H$ such that $N(W_i) \subset W_{i-2}$ and $N^l \colon \Gr^W_{k+l}H \to \Gr^W_{k-l}H$ is an isomorphism for arbitrary $l \geq 0$. This is called the weight filtration of $N$ centered at the integer $k$.

On $H^k(X_{t},  \mathbb{C})$, the monodromy weight filtration (at $t = s_0$) is defined by the weight filtration of the log monodromy action $N_{s_0}$ centered at $k$. Steenbrink exhibited this filtration in terms of the usual weight filtration on a double complex $A^{p,q} = \Omega^{p+q+1}_{X_t}(\log X_{s_0})/W_p\Omega^{p+q+1}_{X_t}(\log X_{s_0})$ consists of the log de Rham complex and extended it to the rational structure $H^k(X_{t},  \mathbb{Q})$. A decreasing filtration $F_{\bullet}$ on $ H^k(X_{t},  \mathbb{C})$ can be also defined from $A^{\bullet, \bullet}$, and the monodromy wight filtration $W_{\bullet}$ and $F_{\bullet}$ define
 a mixed Hodge structure on $H^k(X_t,  \mathbb{Q})$ such that $sp$ and $N_{s_0}$ are morphisms of mixed Hodge structures of weight $0$ and $-1$ respectively (with the usual mixed Hodge structure on $H^k(X_{s_0},  \mathbb{Q})$). See \cite{Cl77} and Chapter 11 of \cite{PS08}.
This is called the limiting mixed Hodge structure (LMHS) and we denote $H^k_{lim}(X_t,  \mathbb{Q})$ for $H^k(X_t,  \mathbb{Q})$ equipped with LMHS.
With this mixed Hodge structure, we obtain the Clemens-Schmid exact sequence:
\begin{thm}\cite{Cl77}
\label{CS exact seq}
There is a long exact sequence of mixed Hodge structures
\begin{align*}
\cdots \to H^k(X_0, \mathbb{Q}) \xrightarrow{sp} H^k_{lim}(& X_t, \mathbb{Q}) \xrightarrow{N_{s_0}} H^k_{lim}(X_t, \mathbb{Q}(-1))\\
 &\xrightarrow{\alpha} H_{2d-k}(X_{s_0}, \mathbb{Q}(-d-1)) \xrightarrow{\phi} H^{k+2}(X_{s_0}, \mathbb{Q}) \to \cdots.
\end{align*}
 Here, $\phi$ is the composition of the Poincar\'e-Lefschetz duality
\[ H_{2d-k}(X_{s_0}, \mathbb{Q}(-d-1)) \cong H^{k+2}_{X_{s_0}}(\mathfrak{X}|_{\Delta}; \mathbb{Q}) \coloneqq H^{k+2}(\mathfrak{X}|_{\Delta}, \mathfrak{X}|_{\Delta^*}; \mathbb{Q}),\] the natural morphism $H^{k+2}(\mathfrak{X}|_{\Delta}, \mathfrak{X}|_{\Delta^*}; \mathbb{Q}) \to H^{k+2}(\mathfrak{X}|_{\Delta}, \mathbb{Q})$ and the isomorphism $H^{k+2}(\mathfrak{X}|_{\Delta}, \mathbb{Q}) \cong H^{k+2}(X_{s_0}, \mathbb{Q})$.
Moreover $\alpha$ factors through $H^{k+1}(\mathfrak{X}|_{\Delta^*},\mathbb{Q})$.

\end{thm}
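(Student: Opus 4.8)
The plan is to realize the Clemens--Schmid sequence as the splice of two classical long exact sequences attached to $\mathfrak{X}|_{\Delta}$, and then to promote the spliced sequence of $\mathbb{Q}$-vector spaces to a sequence of mixed Hodge structures. The two inputs are the \emph{Wang sequence} of the monodromy fibration $\mathfrak{X}|_{\Delta^*}$ (equivalently, of the infinite cyclic cover $\mathfrak{X}_{\mathfrak{H}} \to \mathfrak{X}|_{\Delta^*}$),
\[
\cdots \to H^k(\mathfrak{X}|_{\Delta^*}) \to H^k(X_t) \xrightarrow{T - I} H^k(X_t) \to H^{k+1}(\mathfrak{X}|_{\Delta^*}) \to \cdots,
\]
and the long exact sequence of the pair $(\mathfrak{X}|_{\Delta}, \mathfrak{X}|_{\Delta^*})$,
\[
\cdots \to H^k(\mathfrak{X}|_{\Delta}, \mathfrak{X}|_{\Delta^*}) \to H^k(\mathfrak{X}|_{\Delta}) \to H^k(\mathfrak{X}|_{\Delta^*}) \to H^{k+1}(\mathfrak{X}|_{\Delta}, \mathfrak{X}|_{\Delta^*}) \to \cdots.
\]
Since $T$ is unipotent, $T - I = N \circ U$ for an invertible (unipotent) $U$, so $\ker(T-I) = \ker N$ and $\operatorname{im}(T-I) = \operatorname{im} N$; this allows me to replace $T - I$ by $N$ throughout.

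Next I would insert the standard geometric identifications. Clemens' retraction gives $H^{*}(\mathfrak{X}|_{\Delta}) \cong H^{*}(X_{s_0})$ and $H^{*}(\mathfrak{X}_{\mathfrak{H}}) \cong H^{*}(X_t)$, the latter carrying the limiting mixed Hodge structure $H^{*}_{lim}(X_t)$. Poincar\'e--Lefschetz duality on the real $2d$-dimensional manifold-with-boundary $\mathfrak{X}|_{\Delta}$ (using that $\mathfrak{X}|_{\Delta^*} \simeq \partial(\mathfrak{X}|_{\Delta})$) identifies $H^{k+2}(\mathfrak{X}|_{\Delta}, \mathfrak{X}|_{\Delta^*}) \cong H_{2d-k-2}(\mathfrak{X}|_{\Delta}) \cong H_{2(d-1)-k}(X_{s_0})$, with the Tate twist $(-d)$ produced by the orientation class. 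Under these identifications the pair sequence yields $\phi$ exactly as the stated three-step composition, and the specialization map appears as $sp \colon H^k(X_{s_0}) \cong H^k(\mathfrak{X}|_{\Delta}) \to H^k(\mathfrak{X}|_{\Delta^*}) \to H^k(X_t)$. The remaining arrow $\alpha$ is then built as the composite of the Wang connecting map $H^k_{lim}(-1) \to H^{k+1}(\mathfrak{X}|_{\Delta^*})$ with the pair map $H^{k+1}(\mathfrak{X}|_{\Delta^*}) \to H_{2(d-1)-k}(X_{s_0})(-d)$, which is precisely the asserted factorization of $\alpha$ through $H^{k+1}(\mathfrak{X}|_{\Delta^*})$.

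With the four maps in place, the two sequences share exactly the terms $H^{*}(\mathfrak{X}|_{\Delta^*})$, and I would verify the four exactness statements by chasing them against each other at these shared terms: $\ker N = \operatorname{im}(sp)$ and $\ker(sp) = \operatorname{im}\phi$ come from the Wang sequence together with surjectivity of $H^{*}(\mathfrak{X}|_{\Delta}) \to H^{*}(\mathfrak{X}|_{\Delta^*})$ onto the monodromy invariants, while $\ker\alpha = \operatorname{im} N$ and $\ker\phi = \operatorname{im}\alpha$ come from the pair sequence. Each of these is a routine diagram chase once the Wang sequence is written with $N$ in place of $T - I$; unipotence of $T$ is used again to match the invariants and coinvariants in the two sequences.

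The genuinely hard part is the last step: upgrading the resulting exact sequence of $\mathbb{Q}$-vector spaces to one of mixed Hodge structures. This requires the limiting mixed Hodge structure on $H^k_{lim}(X_t,\mathbb{Q})$ and the facts that $sp$ and $N$ are morphisms of type $(0,0)$ and $(-1,-1)$, which rest on Schmid's nilpotent orbit theorem and the construction of the LMHS via the relative monodromy weight filtration (see \cite{Cl77} and Chapter 11 of \cite{PS08}). The delicate points are that the Poincar\'e--Lefschetz duality isomorphism and the topological connecting maps are \emph{strict} morphisms of MHS after the indicated Tate twists, and that the weight filtration on the homological term $H_{2(d-1)-k}(X_{s_0})(-d)$ agrees with the one transported from the LMHS; granting strictness, exactness in the category of MHS follows formally from exactness of the underlying vector spaces. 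Alternatively, the whole argument can be run in the derived category of mixed Hodge modules with $sp$ and $N$ realized through the nearby- and vanishing-cycle functors, which renders the Hodge-theoretic compatibilities automatic at the cost of obscuring the geometry.
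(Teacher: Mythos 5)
The paper itself does not prove this theorem: it is quoted directly from Clemens \cite{Cl77} (with a pointer to Chapter 11 of \cite{PS08}), so your proposal must be measured against the standard proof in the literature rather than against anything in the text. Your topological skeleton is the standard one and is essentially correct: the Wang sequence of the fibration over $\Delta^*$, the long exact sequence of the pair $(\mathfrak{X}|_{\Delta}, \mathfrak{X}|_{\Delta^*})$, Clemens' retraction, Lefschetz duality with its Tate twist, the replacement of $T-I$ by $N$ via unipotence, and the description of $sp$, $\alpha$ (including its factorization through $H^{k+1}(\mathfrak{X}|_{\Delta^*},\mathbb{Q})$) and $\phi$ as composites are exactly how the maps are constructed, and this setup does show formally that consecutive composites vanish.

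The genuine gap is your claim that the four exactness statements are ``routine diagram chases'' between the two sequences. Splicing two long exact sequences through their common terms $H^*(\mathfrak{X}|_{\Delta^*})$ yields an exact sequence if and only if, in every degree, $H^k(\mathfrak{X}|_{\Delta^*}) = \operatorname{im}\bigl(H^k(\mathfrak{X}|_{\Delta})\bigr) \oplus \operatorname{im}(\delta_{k-1})$, where $\delta_{k-1}$ is the Wang connecting map: the ``spanning'' half of this is equivalent, via Wang exactness, to $\operatorname{im}(sp)=\ker N$, i.e.\ to the local invariant cycle theorem, and the ``zero intersection'' half is its Poincar\'e dual. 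You in fact invoke exactly this when you assume ``surjectivity of $H^*(\mathfrak{X}|_{\Delta}) \to H^*(\mathfrak{X}|_{\Delta^*})$ onto the monodromy invariants,'' but that assumption \emph{is} the local invariant cycle theorem --- the deep core of the whole statement --- not an available fact; likewise $\ker\alpha=\operatorname{im}N$ and $\ker\phi=\operatorname{im}\alpha$ do not ``come from the pair sequence'' but require the same non-formal decomposition. No chase on the two input sequences can produce it: for semistable degenerations of compact non-K\"ahler manifolds the Wang and pair sequences are still exact, yet the invariant cycle theorem and the Clemens--Schmid sequence are known to fail, so projectivity must enter, and in every known proof it enters through Hodge theory (Schmid's nilpotent and ${\rm SL}_2$-orbit theorems in Clemens' argument, the weight spectral sequence of the limiting MHS in Steenbrink's, or Saito's mixed Hodge modules). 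Consequently your last paragraph has the logic inverted: the Hodge-theoretic input is not merely needed to upgrade an already-exact sequence of $\mathbb{Q}$-vector spaces to one of MHS; it is what makes the sequence exact at all. Your outline would become a proof only after replacing the chase by (i) a proof of the local invariant cycle theorem and (ii) the duality argument reducing the remaining exactness statements to it --- which is, in substance, Clemens' actual proof.
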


Since $\dim S = 1$, $\iota_s^*Rj_*\mathcal{H}^{p,n}$ is quasi-isomorphic to the complex 
\[\{H^{2p-n-1}_{lim}(X_t, \mathbb{Q}(p)) \xrightarrow{N_s} H^{2p-n-1}_{lim}(X_t, \mathbb{Q}(p-1)) \}\]
(\cite{KP11}). Hence in this case,  we may consider the singularities and limits as the invariants in
\[ \Hdg(\Coker N_{s_0}) \coloneqq \Hom_{\MHS}(\mathbb{Q}, \Coker N_{s_0})\]
and
\[ J_{lim,{s_0}} \coloneqq \Ext^1_{\MHS}(\mathbb{Q}, \Ker(N_{s_0})) \cong J(H^{2p-n-1}_{lim}(X_t, \mathbb{Q}(p)))\]
respectively. As an extension class, we can represent the admissible normal function $\nu$ by a short exact sequence
\[0 \to \mathcal{V} \to \mathcal{E}_{\nu} \to \mathbb{Q}_{S} \to 0\]
of variations of mixed Hodge structures with the underlying local systems
\[0 \to \mathbb{V} \to \mathbb{E}_{\nu} \to \mathbb{Q}_{S} \to 0.\]
 Deligne's extension $\tilde{\mathbb{E}}_{\nu} \coloneqq e^{-\frac{1}{2 \pi i }\log t N_t}\mathbb{E}_{\nu}$ defines the extension $\mathcal{E}_{\nu,e} \coloneqq \tilde{\mathbb{E}}_{\nu} \otimes \mathcal{O}_{\Delta} $ and the  admissibility of $\nu$ means that $\nu = \nu_F - \nu_{\mathbb{Q}}$ in the Jacobian bundle with a lift $\nu_F$ and $\nu_{\mathbb{Q}}$ of 1 to $\mathcal{E}_{\nu,e}$ and $\tilde{\mathbb{E}}_{\nu, 0}$ respectively such that $\nu_F|_{\Delta^*}$ is in the Hodge filtration $\mathcal{F}^0(\mathcal{E}_{\nu})$ and $N\nu_{\mathbb{Q}}$ is in the monodromy weight filtration $W_{-2}\tilde{\mathbb{V}}_0$. With these notations, specifically the singularity at $s_0 = 0 \in \Delta $ is given by
\[sing_{s_0}(\nu) = [N\nu_{\mathbb{Q}}] (\equiv [N\nu_F(0)]).\]

Let $\mathfrak{Z}^*$ be a higher cycle over $\mathfrak{X}^*$.
If the general fiber $X_t$ is a smooth projective variety, $\mathfrak{Z}^*$ is in $CH^p(\mathfrak{X}^*,n)_{prim}$ since the generalized Hodge classes vanish.
By Theorem \ref{Zucker}, it defines the admissible normal function $\nu_{\mathfrak{Z}^*}$ .
Hence we obtain a map $\mathcal{AJ}^{p,n} \colon CH^p(\mathfrak{X}^*, n) \to NF(S, \mathcal{H}^{p,n})^{\rm ad}_{\overline{S}}\otimes \mathbb{Q}$.
On the other hand, since $S$ is a curve, the codomain of singularities is $\Hdg (\Coker (N_{s_0})) $ for $N_{s_0} \colon H^{2p-n-1}_{lim}(X_t, \mathbb{Q}(p)) \to H^{2p-n-1}_{lim}(X_t, \mathbb{Q}(p-1))$ as we have seen above .
By Theorem \ref{CS exact seq}, this group can be regarded as a subgroup of $H_{2d-2p+n+1}(X_{s_0} , \mathbb{Q}(p-d-1))$. We denote
\[\Hdg_{p-1, n-1}(X_{s_0}) \coloneqq \Hdg (H_{2d-2p+n+1}(X_{s_0}, \mathbb{Q})).\]
Note that the Poincar\'e-Lefschetz duality isomorphism
induces the natural map
\[\beta \colon \Hdg (H^{2p-n+1}_{X_{s_0}}(\mathfrak{X}, \mathbb{Q}(p))) \to \Hdg_{p-1, n-1}(X_{s_0})\]
since the isomorphism is a morphism of mixed Hodge structures.

Since each singular fiber is a simple normal crossing divisor, we can consider the cycle map from $CH^{p-1}(X_{s_0},n-1)$ as the end of the previous section. With this map, we obtain a relation of $res(\mathfrak{Z})$ and $sing_{s_0}(\nu_{\mathfrak{Z}^*})$:

\begin{prop}
\label{cd for sing}
Suppose $\mathfrak{X} \to \overline{S}$ be a semistable degeneration of smooth projective varieties and $n \ge p$ or $p \ge d$.
Then for each $s_0 \in \overline{S}\setminus S$, there is a commutative diagram
\[
\xymatrix@M=7pt{
CH^p(\mathfrak{X^*}, n) \ar[r]^{\mathcal{AJ}^{p,n}}\ar[d]^{res}\ar[rd]^{cl^{p,n}} & NF(S, \mathcal{H}^{p,n}_{f})^{\rm ad}_{\overline{S}}\otimes \mathbb{Q}\ar[r]^{sing_{s_0}} & \Hdg (\Coker (N_{s_0})) \ar@{^(->}[dd]^{\Hdg(\alpha)}\\
CH^{p-1}(\mathfrak{X}_{sing},n-1) \ar[d]^{i_{s_0}^*}& \Hdg^{p,n} (\mathfrak{X}^*) \ar[rd]^{\Hdg (r)}\\
CH^{p-1}(X_{s_0},n-1) \ar[r]^-{cl^{p-1,n-1}} & \Hdg (H^{2p-n+1}_{X_{s_0}}(\mathfrak{X}, \mathbb{Q}(p))) \ar[r]^{\beta} & \Hdg_{p-1, n-1}(X_{s_0}).
}
\]
Here, $i_{s_0}$ is the inclusion $X_{s_0} \subset \mathfrak{X}_{sing}$ and $r$ is defined as the composition of natural maps $H^{2p-n}(\mathfrak{X}^*, \mathbb{Q}(p)) \to H^{2p-n}(\mathfrak{X}^*|_{\Delta^*}, \mathbb{Q}(p)) \xrightarrow{res_{s_0}} H^{2p-n+1}(\mathfrak{X}|_{\Delta}, \mathfrak{X}|_{\Delta} \setminus X_{s_0}; \mathbb{Q}(p))$ and the Poincar\'e-Lefschetz duality isomorphism.
\end{prop}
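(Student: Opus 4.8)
The plan is to split the diagram along the middle diagonal formed by $cl^{p,n}$ and $\Hdg(r)$, and to verify the upper-right and lower-left portions separately, exploiting the fact that the two vertical identifications of the common target $\Hdg_{p-1,n-1}(X_{s_0})$ --- namely $\Hdg(\alpha)$ and $\beta$ --- are both induced by the single Poincar\'e--Lefschetz duality $H_{2(d-1)-k}(X_{s_0},\mathbb{Q}(-d)) \cong H^{k+2}(\mathfrak{X}|_{\Delta}, \mathfrak{X}|_{\Delta^*}; \mathbb{Q})$ (with $k \coloneqq 2p-n-1$) that already enters Theorem \ref{CS exact seq} through $\alpha$ and enters the definition of $r$. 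Granting this compatibility, the assertion reduces to two commutativities: (I) the identity $\Hdg(\alpha)\circ sing_{s_0}\circ\mathcal{AJ}^{p,n} = \beta\circ \Hdg(r)\circ cl^{p,n}$, which says the singularity of the normal function is the topological residue to $X_{s_0}$ of the associated cohomology class; and (II) the identity $\Hdg(r)\circ cl^{p,n} = cl^{p-1,n-1}\circ i_{s_0}^*\circ res$, which is the compatibility of cycle-class maps with the localization and residue sequences. Combining (I) and (II) and composing with $\beta$ yields the outer commutativity.

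For (II) I would argue by functoriality of the KLM cycle-class map. By Bloch's moving lemma the residue $res(\mathfrak{Z})$ is represented by $\partial\Gamma$ for a precycle $\Gamma\in\mathcal{Z}^p(\mathfrak{X},n)$ restricting to $\mathfrak{Z}^*$ on $\mathfrak{X}^*$, and $i_{s_0}^*\partial\Gamma$ is the induced higher cycle on the simple normal crossing divisor $X_{s_0}$, whose class is computed by the KLM formula for normal crossing divisors (Section 5.9 of \cite{KLMS06}, Section 3 of \cite{KL07}). Under the standing hypothesis $n \ge p$ or $p \ge d$ the form part $\Omega_Z$ vanishes, so only the topological current $T_Z$ and its membrane survive, and the localization triangle in higher Chow groups matches the localization triangle in absolute Hodge cohomology with supports at the level of currents. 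Since $r$ is by construction the topological residue $res_{s_0}$ of $cl^{p,n}(\mathfrak{Z}^*)$ followed by the same duality, (II) is exactly the naturality of the KLM map with respect to these commuting triangles, which I would check directly on representatives.

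For (I) I would use the Hodge-theoretic computation of the singularity. Because $\dim S = 1$, \cite{KP11} identifies $\iota_{s_0}^*Rj_*\mathcal{H}^{p,n}$ with the two-term complex $\{H^{2p-n-1}_{lim}(X_t,\mathbb{Q}(p)) \xrightarrow{N_{s_0}} H^{2p-n-1}_{lim}(X_t,\mathbb{Q}(p-1))\}$, so that its $H^1$ is $\Coker N_{s_0}$ and $sing_{s_0}$ is the map $(\iota_{s_0}^*Rj_*)^{\Hdg}$ into $\Hdg(\Coker N_{s_0})$. The key point is that the singularity depends only on the underlying cohomology class, so that $sing_{s_0}\circ\mathcal{AJ}^{p,n}$ factors through $cl^{p,n}$, and on that class $(\iota_{s_0}^*Rj_*)^{\Hdg}$ is computed by the topological residue to $X_{s_0}$, i.e.\ by $r$. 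Using the admissibility description recalled above, in which $sing_{s_0}(\nu) = [N\nu_{\mathbb{Q}}] \equiv [N\nu_F(0)]$, the class $N\nu_{\mathbb{Q}}$ is the monodromy obstruction to extending the topological lift of $\mathfrak{Z}^*$ across $X_{s_0}$; under the $\Gr^W$-description of the Clemens--Schmid sequence this obstruction is carried by $\alpha$ to the residue cycle on $X_{s_0}$, which is precisely $\beta\circ\Hdg(r)\circ cl^{p,n}$.

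The main obstacle is (I). The subtle points are to track the normalizing powers of $\twist$ and the signs appearing in the KLM currents $T_Z$ and $R_Z$ through the limiting mixed Hodge structure, and to verify that the two Poincar\'e--Lefschetz identifications --- the one built into the Clemens--Schmid map $\alpha$ and the one used to define $r$ and $\beta$ --- agree on the nose rather than up to a universal scalar, so that $\Hdg(\alpha)\circ sing_{s_0}\circ\mathcal{AJ}^{p,n}$ and $\beta\circ cl^{p-1,n-1}\circ i_{s_0}^*\circ res$ coincide. By contrast the functoriality in (II) is comparatively routine once the KLM formula for normal crossing divisors is in hand.
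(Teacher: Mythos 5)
Your decomposition into the two triangles, and your treatment of the lower one via functoriality of the KLM cycle-class maps with respect to pullback and residue, match the paper's proof. The genuine gap is in (I), which you yourself flag as the main obstacle but then resolve by assertion: the claim that ``the singularity depends only on the underlying cohomology class, so that $sing_{s_0}\circ\mathcal{AJ}^{p,n}$ factors through $cl^{p,n}$, and on that class $(\iota_{s_0}^*Rj_*)^{\Hdg}$ is computed by the topological residue'' \emph{is} the upper triangle; taking it as the ``key point'' assumes the conclusion. Likewise, saying that $N\nu_{\mathbb{Q}}$ is ``the monodromy obstruction \ldots carried by $\alpha$ to the residue cycle'' names the desired identity without supplying any mechanism that actually computes $[N\nu_F(0)]$ from the cycle $\mathfrak{Z}^*$.

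The mechanism the paper uses, and which your plan lacks, is a Gauss--Manin computation on KLM representatives. Because $n \ge p$ or $p \ge d$, the fiberwise form part $\Omega_{Z_t}$ vanishes, so each $AJ^{p,n}(Z_t)$ is represented by the single current $R'_{Z_t} \coloneqq (2\pi i)^{p-n}R_{Z_t} + (2\pi i)^{p}\delta_{\Gamma_t}$ with $\partial \Gamma_t = T_{Z_t}$; differentiating a local lift $R'_{\mathfrak{Z}^*_U}$ produces exactly the total-space form $\Omega_{\mathfrak{Z}^*_U}$, whence $\nabla \nu = [\Omega_{\mathfrak{Z}^*}] = cl^{p,n}(\mathfrak{Z}^*)$. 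Combining this with the standard identity $res_{s_0}(\nabla) = -2\pi i N$ and the admissibility formula $sing_{s_0}(\nu) = [N\nu_F(0)]$ yields $sing_{s_0}\circ\mathcal{AJ}^{p,n}(\mathfrak{Z}^*) = (2\pi i)^{-1}(res_{s_0}\circ cl^{p,n})(\mathfrak{Z}^*)$, which is precisely the upper triangle; the compatibility of the two Poincar\'e--Lefschetz identifications that worries you is then disposed of by noting that $\alpha$ is a morphism of type $(1-d,1-d)$. To repair your proposal, either carry out this current-level computation (this is also where the hypothesis $n\ge p$ or $p\ge d$ genuinely enters in step (I), not only in step (II)), or cite a result establishing in this higher-normal-function setting that the singularity equals the topological residue of the cycle class; as written, step (I) is circular.
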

\begin{proof}
The commutativity of the lower triangular diagram follows from the functoriality of the cycle maps for the pull back and residue maps. To see that of the upper triangular diagram, recall that the image of the cycle map $cl^{p,n}(\mathfrak{Z}^*)$ for a given family $\mathfrak{Z}^*$ in $CH^p(\mathfrak{X}^*,n)$ is obtained by the class of a topological cycle $[(2 \pi i )^pT_{\mathfrak{Z}'^*}]$ via the KLM formula (for the complement of a normal crossing divisor) $cl^{p,n}_{\mathcal{D}}(\mathfrak{Z}^*) = [(2 \pi i )^{p-n}((2 \pi i )^n T_{\mathfrak{Z}'^*}, \Omega_{\mathfrak{Z}'^*}, R_{\mathfrak{Z}'^*})]$  with a representative $\mathfrak{Z}'^*$ in $\mathcal{Z}^p_{\mathbb{R}}(\mathfrak{X}^*, n)$ of $\mathfrak{Z}^*$.  Hence $\Hdg (r) \circ cl^{p,n} (\mathfrak{Z}^*)$ coincides with the dual of  $(2 \pi i )^pres_{s_0}([T_{\mathfrak{Z}'^*}])$. 

On the other hand, we can take a chain $\Gamma_t$ with $ \partial \Gamma_t = T_{Z_t}$ for the fiber $Z_t$ of $\mathfrak{Z}^*$ at general $t$, since $[T_{Z_t}] = 0$.
From the assumption $n \ge p$ or $p \geq d$, $\Omega_{Z_t} =0$, and hence we may simplify the triple for $cl^{p,n}_{\mathcal{D}}(Z_t)$ to $(0, 0, R'_{Z_t} \coloneqq (2 \pi i )^{p-n}R_{Z_t} + (2 \pi i )^p \delta_{\Gamma_t})$ by adding $D((2 \pi i )^p \Gamma_t, 0,0) = (0,0,(2 \pi i )^{p}\delta_{\Gamma_t})$. Therefore $\nu(t) \coloneqq AJ^{p,n}(Z_t) = \nu_{\mathbb{Q}}(t) - \nu_F(t)$ can be represented by the family of currents $\{ R'_{Z_t} \}$, on whose class $[R'_{Z_t}]$ the Gauss-Manin connection $\nabla$ is computed by locally, lifting the $\{R'_{Z_t}\}$ to $R'_{\mathfrak{Z}^*_U}$ and applying $d$ to get $\Omega_{\mathfrak{Z}^*_U}$. Hence $\nabla\nu = [\Omega_{\mathfrak{Z}^*}] = cl^{p,n}(\mathfrak{Z}^*)$.
 It is well-known that $res_{s_0}(\nabla ) = - 2\pi i N$, therefore
 \begin{align*}
 	sing_{s_0} \circ \mathcal{AJ}^{p,n}(\mathfrak{Z}^*)   = N \nu _F (0)  & = (-2 \pi i )^{-1}res_{s_0}(\nabla)(\nu _F(0)) \\
 	& = (2 \pi i)^{-1}res_{s_0}( \nabla )( \nu ) \\
 	& = (2 \pi i)^{-1} res_{s_0}(\nabla \nu) = (2\pi i )^{-1}(res_{s_0} \circ cl^{p,n})(\mathfrak{Z}^*).
  	 \end{align*}
 Since $\alpha$  is a morphism of type (1-d,1-d) it coincides with the above computation.

\end{proof}
By Theorem \ref{CS exact seq}, $\Coker(N_{s_0})$ is isomorphic to $\Ker(\phi)$.
Hence we obtain the composition 
\[\widetilde{sing}_{s_0} \coloneqq \Hdg(\beta) \circ cl^{p-1, n-1} \circ i^*_{s_0} \circ res \colon CH^p(\mathfrak{X^*}, n) \to \Hdg(\Ker (\phi)).
\]
\begin{cor}
\label{Cor for sing}
In the situation of the above proposition,
$sing_{s_0}(\nu_{\mathfrak{Z}^*}) \neq 0$ if and only if $\widetilde{sing}_{s_0}(\mathfrak{Z}^*) \neq 0$.
\end{cor}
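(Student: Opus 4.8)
The plan is to obtain the corollary as a formal consequence of the commutative diagram in Proposition \ref{cd for sing}, once we know that the vertical map $\Hdg(\alpha)$ appearing there is injective. The idea is that $sing_{s_0}(\nu_{\mathfrak{Z}^*})$ and $\widetilde{sing}_{s_0}(\mathfrak{Z}^*)$ are the two ways of traversing the outer boundary of that diagram, so they differ only by the single morphism $\Hdg(\alpha)$; if this morphism is injective the two invariants must vanish simultaneously.

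First I would establish the injectivity of $\Hdg(\alpha)$ directly from the Clemens-Schmid sequence (Theorem \ref{CS exact seq}) applied to $k = 2p-n-1$ and twisted by $(p)$. Exactness at $H^{2p-n-1}_{lim}(X_t, \mathbb{Q}(p-1))$ gives $\mathrm{Ker}(\alpha) = \mathrm{Im}(N_{s_0})$, and exactness at $H_{2d-2p+n-1}(X_{s_0}, \mathbb{Q}(p-d))$ gives $\mathrm{Im}(\alpha) = \Ker(\phi)$. Hence $\alpha$ descends to an isomorphism of mixed Hodge structures $\Coker(N_{s_0}) \xrightarrow{\cong} \Ker(\phi)$. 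Since $\Hdg = \Hom_{\MHS}(\mathbb{Q}, -)$ is left exact, it sends this isomorphism to an isomorphism $\Hdg(\Coker(N_{s_0})) \xrightarrow{\cong} \Hdg(\Ker(\phi))$; in particular $\Hdg(\alpha)$ is injective with image precisely the subgroup $\Hdg(\Ker(\phi)) \subseteq \Hdg_{p-1, n-1}(X_{s_0})$.

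Next I would read the outer commutativity of the diagram in Proposition \ref{cd for sing}, evaluated on $\mathfrak{Z}^* \in CH^p(\mathfrak{X}^*, n)$ with $\nu_{\mathfrak{Z}^*} = \mathcal{AJ}^{p,n}(\mathfrak{Z}^*)$, as the identity
\[
\Hdg(\alpha)\bigl(sing_{s_0}(\nu_{\mathfrak{Z}^*})\bigr) = \Hdg(\beta) \circ cl^{p-1, n-1} \circ i^*_{s_0} \circ res(\mathfrak{Z}^*) = \widetilde{sing}_{s_0}(\mathfrak{Z}^*),
\]
where the last equality is just the definition of $\widetilde{sing}_{s_0}$ and the final assertion of the previous paragraph guarantees that the right-hand side indeed lands in $\mathrm{Im}(\Hdg(\alpha)) = \Hdg(\Ker(\phi))$. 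Since $\Hdg(\alpha)$ is injective, $\Hdg(\alpha)\bigl(sing_{s_0}(\nu_{\mathfrak{Z}^*})\bigr)$ vanishes if and only if $sing_{s_0}(\nu_{\mathfrak{Z}^*}) = 0$; combined with the displayed identity this shows that $\widetilde{sing}_{s_0}(\mathfrak{Z}^*) = 0$ exactly when $sing_{s_0}(\nu_{\mathfrak{Z}^*}) = 0$, which is the claimed equivalence.

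The argument is short and essentially formal, so there is no serious obstacle beyond Proposition \ref{cd for sing} itself. The one point I would treat carefully is the bookkeeping of Tate twists and the matching of targets: one must check that $\alpha$ really induces an \emph{isomorphism} (and not merely an injection) $\Coker(N_{s_0}) \cong \Ker(\phi)$ of mixed Hodge structures, so that the factorization of $\widetilde{sing}_{s_0}$ through $\Hdg(\alpha)$ is canonical, and that the composite $\Hdg(\beta) \circ cl^{p-1,n-1} \circ i^*_{s_0} \circ res$ does take values in $\Hdg(\Ker(\phi))$ as asserted in the statement. Both are furnished by the exactness in Theorem \ref{CS exact seq} together with the commutativity already proved, so no further computation is required.
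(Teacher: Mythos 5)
Your proposal is correct and is essentially the paper's own (implicit) argument: the paper states the corollary as an immediate consequence of Proposition \ref{cd for sing}, where $\Hdg(\alpha)$ is already drawn as an injection, together with the identification $\Coker(N_{s_0}) \cong \Ker(\phi)$ furnished by the Clemens--Schmid sequence. You have simply made explicit the same two ingredients — injectivity of $\Hdg(\alpha)$ from exactness of Theorem \ref{CS exact seq} and the outer commutativity of the diagram — so there is nothing to add.
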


We also can describe the limit invariant $lim_{s_0}(\nu_{\mathfrak{Z}^*})$  as follows under the assumption that $res(\mathfrak{Z}^*)$ vanishes. 
Since the specialization map $sp \colon H^{2p-n-1}(X_{s_0}, \mathbb{Q}(p)) \to H^{2p-n-1}_{lim}(X_t, \mathbb{Q}(p))$ is a morphism of MHS, it induces a map $J(sp)\colon J(X_{s_0}) \to J_{lim,s_0}$. Now $lim_{s_0}(\nu_{\mathfrak{Z}^*})$ is an invariant in the right hand side, but we also can extend $\mathfrak{Z}^*$ to a higher cycle $\mathfrak{Z}$ in $CH^p(\mathfrak{X},n)$. It defines the pullback $Z_0$ in $H_{\mathcal{M}}^{2p-n}(X_{s_0},\mathbb{Q} (p))$. Then
\begin{thm}\cite{dDI17}
\label{limit}
\[lim_{s_0}(\nu_{\mathfrak{Z}^*}) = J(sp)(AJ_{X_{s_0}}(Z_0)).\]
	
\end{thm}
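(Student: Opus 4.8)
The plan is to mimic the current-theoretic computation in the proof of Proposition \ref{cd for sing}, but now to extract the \emph{constant term} of the family of Abel-Jacobi currents at $s_0$ rather than its residue. First I would use the hypothesis $res(\mathfrak{Z}^*) = 0$ to fix the extension $\mathfrak{Z} \in CH^p(\mathfrak{X}, n)$ together with its restriction $Z_0 = i_{s_0}^*\mathfrak{Z}$; by Corollary \ref{Cor for sing} the vanishing of $res$ forces $\widetilde{sing}_{s_0}(\mathfrak{Z}^*) = 0$, hence $sing_{s_0}(\nu_{\mathfrak{Z}^*}) = 0$, so that $lim_{s_0}(\nu_{\mathfrak{Z}^*})$ is defined and the right-hand side $AJ_{X_{s_0}}(Z_0)$ makes sense. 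As in Proposition \ref{cd for sing}, under the running simplification $\Omega_{\mathfrak{Z}} = 0$ (valid when $n \ge p$ or $p \ge d$, which covers the application of this paper) the fiberwise value $\nu(t) = AJ^{p,n}(Z_t)$ is represented by the family of currents $R'_{Z_t} = (2\pi i)^{p-n}R_{Z_t} + (2\pi i)^p \delta_{\Gamma_t}$ with $\partial \Gamma_t = T_{Z_t}$. The role of $res(\mathfrak{Z}^*) = 0$ is that the membranes $\Gamma_t$ and the currents $R_{Z_t}$ can be assembled into global objects $\Gamma_{\mathfrak{Z}}, R_{\mathfrak{Z}}$ on $\mathfrak{X}|_{\Delta}$ whose restrictions to the singular fiber give a membrane $\Gamma_0$ with $\partial \Gamma_0 = T_{Z_0}$ and the current $R_{Z_0}$ computing the normal-crossing KLM class of $Z_0$.

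Next I would identify each side with an explicit representative in $J(\Ker N_{s_0})$. For the left-hand side, recall the admissibility decomposition $\nu = \nu_F - \nu_{\mathbb{Q}}$: transported into Deligne's canonical extension $\mathcal{E}_{\nu,e}$, the family $\{R'_{Z_t}\}$ extends across $t = 0$, and since $sing_{s_0}(\nu_{\mathfrak{Z}^*}) = [N\nu_{\mathbb{Q}}] = 0$ its value at the origin descends to a well-defined class $[\nu_F(0) - \nu_{\mathbb{Q}}]$ in $J(\Ker N_{s_0}) = J_{lim,s_0}$, which is exactly $lim_{s_0}(\nu_{\mathfrak{Z}^*})$ through the map $lim_s$ of the defining diagram. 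For the right-hand side, the normal-crossing KLM formula recalled at the end of Section \ref{Higher Chow Groups and Hodge-D-Conjecture} represents $AJ_{X_{s_0}}(Z_0) \in J(X_{s_0})$ by the restricted current $R'_{Z_0} = (2\pi i)^{p-n}R_{Z_0} + (2\pi i)^p \delta_{\Gamma_0} = R'_{\mathfrak{Z}}|_{X_{s_0}}$.

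The decisive step is to match these two representatives through $J(sp)$. The specialization map $sp \colon H^{2p-n-1}(X_{s_0}, \mathbb{Q}(p)) \to H^{2p-n-1}_{lim}(X_t, \mathbb{Q}(p))$, whose image is precisely $\Ker N_{s_0}$ by the exactness of the Clemens-Schmid sequence (Theorem \ref{CS exact seq}), is induced on homology and currents by the inclusion $X_{s_0} \hookrightarrow \mathfrak{X}|_{\Delta}$ followed by Clemens' retraction; dually this carries $R'_{\mathfrak{Z}}|_{X_{s_0}}$ to the nearby limit of the family $\{R'_{Z_t}\}$. Hence $J(sp)(AJ_{X_{s_0}}(Z_0))$ is represented by the same limiting current as $lim_{s_0}(\nu_{\mathfrak{Z}^*})$, and the two classes coincide.

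The hard part will be exactly this last matching, because the limit defining $lim_{s_0}$ lives in Deligne's holomorphic canonical extension, whereas $sp$ is built from the non-holomorphic Clemens retraction (\cite{Cl77}). To bridge them one must control the logarithmic asymptotics of $\{R'_{Z_t}\}$ as $t \to 0$ and show, compatibly with the weight filtration of the limiting mixed Hodge structure, that the constant term of the current family in the canonical extension equals the pushforward of the special-fiber current $R'_{Z_0}$ under $sp^\vee$; equivalently, that the nearby-cycle limit of the Abel-Jacobi current agrees with the cycle class of $Z_0$ on $X_{s_0}$. This compatibility is the technical heart of the argument and is carried out in \cite{dDI17}.
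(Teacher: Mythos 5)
The first thing to note is that the paper does not prove this statement at all: Theorem \ref{limit} is quoted verbatim from \cite{dDI17}, and the surrounding text only sets up the objects needed to state it ($J(sp)$, the extension $\mathfrak{Z}$, the pullback $Z_0$). So there is no internal proof to compare your attempt against; the only question is whether your attempt constitutes an independent proof of the cited result.

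It does not, and the gap is exactly where you locate it yourself. Your first two steps are fine and consistent with the paper's machinery: $res(\mathfrak{Z}^*)=0$ gives the lift $\mathfrak{Z}$ via the localization sequence, forces $\widetilde{sing}_{s_0}(\mathfrak{Z}^*)=0$ and hence $sing_{s_0}(\nu_{\mathfrak{Z}^*})=0$ by Corollary \ref{Cor for sing}, so both sides of the identity are defined; and representing $\nu(t)$ by the currents $R'_{Z_t}$ follows Proposition \ref{cd for sing}. But the entire content of the theorem is the ``decisive step'' you then defer: that the value at the origin of the canonically extended current family equals, in $J(\Ker N_{s_0})$, the image under $J(sp)$ of the class represented by $R'_{Z_0}$. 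Writing that this compatibility ``is carried out in \cite{dDI17}'' makes the argument circular, since \cite{dDI17} is precisely the source of the statement being proved; nothing in your sketch actually bridges the holomorphic canonical extension with the topological specialization map. Two further loose ends: (a) you silently impose $n \ge p$ or $p \ge d$ to kill $\Omega_{\mathfrak{Z}}$, a hypothesis the theorem does not carry (indeed, by the paper's remark the result holds even without the semistability assumption); (b) the assembly of the fiberwise membranes $\Gamma_t$ and currents $R_{Z_t}$ into single-valued global objects on $\mathfrak{X}|_{\Delta}$ restricting to $\Gamma_0$, $R_{Z_0}$ on $X_{s_0}$ is asserted, not proved --- the $\Gamma_t$ are a priori multivalued in $t$ (this multivaluedness is exactly what monodromy and singularities of normal functions measure), and showing that vanishing of $sing_{s_0}$ suffices to choose them compatibly, with controlled logarithmic growth of $R_{Z_t}$ as $t \to 0$, is the analytic work the citation hides.
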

Especially, when we have a family $\omega(t)$ of the class in $\Hdg(H_{2p-n-1}(X_t, \mathbb{Q}(-p)))$ such that it lifts to a class on $\mathbb{\mathfrak{X}^*}$ with non-trivial residue on $X_0$, dually it induces a splitting 
\[\eta \colon H^{2p-n-1}(X_0, \mathbb{Q}(p)) \twoheadrightarrow \mathbb{Q}(p)\]
of the morphism of MHS. The analytic limit of the paring $\langle\nu_{\mathfrak{Z}}(t), \omega(t)\rangle$ can be obtained as the period
\[\lim_{t \to s_0}\langle\nu_{\mathfrak{Z}}(t), \omega(t)\rangle \equiv J(\eta)(AJ_{X_0}(Z_0))\]
in $ J(\mathbb{Q}(p)) \cong \mathbb{C}/\mathbb{Q}(p)$.
\begin{remark}
	More generally, the above theorem does not require the SSD condition. (\cite{dDI17}, Section 5.3).
\end{remark}

%%%%%%%%%%%%%%%%%%%%%%%%%%%%%%%%%%%%%%%%%%%%%%%%%

\section{Higher Chow Cycles with Non-trivial Singularities}
\label{Higher Chow Cycles with Non-trivial Singularities}
We use the same notations as Section \ref{Construction of Families of Higher Cycles}.
In this section we shall prove the following statement.

\begin{thm}
\label{main thm}
Let $\ast$ be $0$ or $\infty$.
For general choices of $\{L_i\}$ and $\{M_l\}$,
$sing_{\ast} \circ \mathcal{AJ}^{2,1} (\mathscr{I_{\ast} \cup \mathscr{D}})$ spans $\Hdg(\Coker (N_{\ast})) \subset \Hdg_{1, 0}(X_{\ast})$.
\end{thm}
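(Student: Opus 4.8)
The plan is to convert the transcendental computation of $sing_0 \circ \mathcal{AJ}^{2,1}$ into an explicit computation with divisor classes and then into a combinatorial rank statement. The engine is Proposition \ref{cd for sing} together with Corollary \ref{Cor for sing}. Since our fibres are surfaces, the pair $(p,n) = (2,1)$ satisfies $p \ge \dim X_t = 2$, so the proposition applies and its upper-right square identifies $\Hdg(\alpha) \circ sing_0 \circ \mathcal{AJ}^{2,1}$ with $\widetilde{sing}_0 = \Hdg(\beta) \circ cl^{1,0} \circ i_0^* \circ res$. Because $\Hdg(\alpha)$ is injective, proving that $sing_0 \circ \mathcal{AJ}^{2,1}(\mathscr{I}_0 \cup \mathscr{D})$ spans $\Hdg(\Coker N_0)$ is equivalent to proving that $\widetilde{sing}_0(\mathscr{I}_0 \cup \mathscr{D})$ spans $\Hdg(\alpha)(\Hdg(\Coker N_0)) = \Hdg(\Ker \phi)$. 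The decisive point is that $p-1 = 1$ and $n-1 = 0$, so every object in $\widetilde{sing}_0$ is classical: $res$ lands in $CH^1(\mathfrak{X}_{sing}) = \Pic$, the map $i_0^*$ restricts divisors to the central fibre, and $cl^{1,0}$ is the ordinary divisor class map. Thus the whole problem reduces to computing a finite list of divisor classes on the singular fibre and checking that they span a prescribed subspace.

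First I would pass to a semistable model. Blowing up the $d\binom{d}{2}$ nodes $p^{ij}_l = L_i \cap L_j \cap M_l$ of $\mathcal{X}$ near $X_0$ produces a reduced simple normal crossing central fibre $\tilde X_0$ consisting of the proper transforms of the plane components $\Pi_i = \{L_i = 0\}$ together with exceptional divisors $E^{ij}_l$ over the nodes. By Theorem \ref{CS exact seq} the target is $\Hdg(\Coker N_0) \cong \Hdg(\Ker \phi) \subset \Hdg_{1,0}(X_0) = \Hdg(H_2(\tilde X_0, \mathbb{Q}(-1)))$, where $\phi$ is the Clemens--Schmid Gysin map. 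I would therefore write down a generating set of $H_2(\tilde X_0)$ from the line classes inside the planes $\Pi_i$, the double curves $\Pi_i \cap \Pi_j$, and the exceptional curves in the $E^{ij}_l$, and describe $\phi$ on this generating set in order to pin down $\Ker \phi$ and hence the Hodge-theoretic target explicitly.

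Next I would compute the residues. For $\lambda_{il}$ the closure of its support is the surface $\mathcal{L}_i \cap \mathcal{M}_l \subset \mathfrak{X}$, on which the defining function $\pi = t$ has divisor equal to the difference of the fibres over $0$ and $\infty$; hence $i_0^* res(\lambda_{il}) = [\ell_{il}]$ is the class of the line $\ell_{il} = \Pi_i \cap \{M_l = 0\}$. For $\gamma_{ijk,l}$ the functions $\phi_{\alpha l} = L_{\sigma(\alpha)}/L_{\sigma^2(\alpha)}$ have their zeros and poles precisely at the nodes $p^{\alpha\beta}_l$, so $res(\gamma_{ijk,l})$ is supported on the exceptional curves over $p^{ij}_l, p^{jk}_l, p^{ki}_l$, with coefficients $\pm 1$ dictated by the order of vanishing of $\phi_{\alpha l}$ along the double locus (equivalently the tame symbol). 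The telescoping $\Gamma_{il} + \Gamma_{jl} + \Gamma_{kl}$ makes each of these three nodes occur as a zero of one function and a pole of another, which is exactly the closedness $\partial \gamma_{ijk,l} = 0$ and produces a cyclic combination of the three exceptional classes. I would record each $\widetilde{sing}_0$ as an explicit integral vector against the generating set of $H_2(\tilde X_0)$ fixed above.

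The main obstacle is the final spanning statement, which I expect to be the crux. After the reduction it becomes the assertion that the matrix whose rows are the vectors $\widetilde{sing}_0(\gamma_{ijk,l})$ and $\widetilde{sing}_0(\lambda_{il})$ has rank equal to $\dim \Hdg(\Ker \phi)$. I would establish this by using the line classes $[\ell_{il}]$ coming from $\mathscr{D}$ to control the directions supported on the plane components, and the cyclic exceptional contributions of the $\gamma_{ijk,l}$ to control the directions supported on the $E^{ij}_l$, showing these are independent modulo $\mathrm{Im}\,\phi$; the \emph{general position} of $\{L_i\}$ and $\{M_l\}$ is what guarantees that the relevant intersection numbers are generic so that no unexpected relations appear. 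Finally, the case $\ast = \infty$ follows from the case $\ast = 0$ by the symmetry exchanging the roles of $L_i$ and $M_l$, which interchanges $X_0 \leftrightarrow X_\infty$, $N_0 \leftrightarrow N_\infty$ and $\mathscr{I}_0 \leftrightarrow \mathscr{I}_\infty$ while fixing $\mathscr{D}$.
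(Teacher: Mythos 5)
Your proposal runs on the same engine as the paper's proof: reduce via Proposition \ref{cd for sing} and Corollary \ref{Cor for sing} to the divisor-theoretic invariant $\widetilde{sing}_0$, identify the target $\Hdg(\Coker N_0)\cong\Hdg(\Ker\phi)$ through Clemens--Schmid and the weight spectral sequence, compute residue classes of $\lambda_{il}$ and $\gamma_{ijk,l}$ on a semistable model, finish by linear algebra, and dispatch $\ast=\infty$ by the $L\leftrightarrow M$ symmetry. Two differences deserve comment. First, your semistable model is not the paper's: you blow up the nodes $p^{ij}_l$ as points, so your central fibre acquires quadric components and your ``exceptional divisors $E^{ij}_l$'' are surfaces carrying ruling classes; the paper instead blows up successively along the strict transforms of the base curves $B_i$ (a small-type resolution), so its $E^{ij}_l$ are \emph{curves} inside the plane component $\tilde{L}_j$, and $H_2(\tilde X_0)^{(-1,-1)}$ is spanned by $\{l_i\}$ and $\{e^{ij}_l\}$ modulo the relations $l_j - l_i = \sum_{l'} e^{ij}_{l'}$ (Lemma \ref{basis}). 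Your model is workable but inflates the homology by two ruling classes per node; the paper's choice has its own subtlety (Lemma \ref{blow up computation}): the divisor of $L_{\sigma(\alpha)}$ on $\mathcal{N}_{il}$ contributes an exceptional curve only when $\alpha<\sigma(\alpha)$, plus horizontal components $[p^{\bullet}_l\times\mathbb{P}^1]$ that cancel in the telescoping sum --- consistent with your zero/pole cancellation heuristic --- yielding $\widetilde{sing}_0(\gamma_{ijk,l})=e^{ij}_l+e^{jk}_l-e^{ik}_l$; note also that $\widetilde{sing}_0(\lambda_{il})$ is \emph{not} the naive line class but $l_i-\bigl(\sum_{i'<i}e^{i'i}_l\bigr)+\bigl(\sum_{i'>i}e^{ii'}_l\bigr)$, picking up exceptional corrections you elide.

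Second, and more substantively, your spanning step mischaracterizes where the difficulty lies. The singularity classes are \emph{constant integer vectors}, independent of the (general) choice of forms; general position only guarantees the stated degeneration geometry, not any genericity of intersection numbers in the rank computation. And the clean division of labor you propose --- $\lambda$'s controlling plane directions, $\gamma$'s controlling exceptional directions --- does not match the actual linear algebra: the $\gamma$'s alone only span the ``coboundary'' combinations $v^{ij}_l+v^{jk}_l-v^{ik}_l$ of the basis vectors $v^{ij}_l\coloneqq\sum_{l'}(e^{ij}_l-e^{ij}_{l'})$, and the paper hits an individual $v^{ij}_l$ only via the mixed combination $\lambda_{jl}-\lambda_{il}-\sum_{k<i}\gamma_{kij,l}+\sum_{i<k<j}\gamma_{ikj,l}-\sum_{j<k}\gamma_{ijk,l}$, crucially exploiting the relation $l_j-l_i=\sum_{l'}e^{ij}_{l'}$. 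Since you propose to verify the rank by writing out the full matrix anyway, this is an imprecision in the plan rather than a step that would fail; but the necessity of mixing $\mathscr{D}$ with $\mathscr{I}_0$ is the real crux of the paper's spanning argument, and your sketch as stated would not close it without discovering that combination.
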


To apply the discussions in the previous section, first of all, we resolve the singularities of $\mathcal{X}$ to obtain a semistable degeneration family $\tilde{\mathcal{X}}$.
Recall that $\mathcal{X}$ has $d\binom{d}{2}$ nodal singularities $\{p^{ij}_l\}_{1 \le i < j \le d, 1 \le l \le d}$ which are included in the base locus $B = \bigcup B_i$.
If we denote $\mathbb{P}^3 = P^0$ and define a successive blow up $P^i$ of $\mathbb{P}^3$ inductively by the blow up $b_i \colon P^{i} \to P^{i-1}$ of $P^{i-1}$ along the strict transformation of $B_{i}$ in $P^{i-1}$, then the composition $b_i \circ b_{i-1} \circ \cdots \circ b_1$ defines a strict transformation $\mathcal{X}^i \to \mathcal{X}$. Since each $p^{ij}_l \in B_i$ is a node, this strict transformation resolves $p^{ij}_l$.
We define a smooth family $\tilde{\mathcal{X}}$ by taking a resolution of the remaining singularities in $\mathcal{X}^d$. Denote the composition of these resolutions by $b \colon \tilde{\mathcal{X}} \to \mathcal{X}$.
Though a singular fiber $\tilde{X}_{t_0}$ of  $\tilde{\mathfrak{X}}$ may not be a simple normal crossing unless $t_0 = 0$ or $\infty$, by the semistable reduction theorem (\cite{KKMS73}), we may assume that $\tilde{\pi} \colon \tilde{\mathcal{X}} \to S$ with a finite cover $S \to \mathbb{P}^1$ is a semistable degeneration family after repeating base changes and desingularizations.
Note that $\tilde{X}_t \cong X_t$ and $\tilde{X}_0 \cong (\mathcal{X}^d)_0$ is given by adding the exceptional curve $E^{ij}_l \coloneqq b^{-1}(p_{ij, l})$ to $L_j \subset X_0$.
More precisely, for the strict transformation $\tilde{L_j}$ of $L_j$, 
\[
\Pic(\tilde{L_j}) \cong b^*\Pic(L_j)\oplus (\bigoplus_{i < j, l}\mathbb{Z}[E^{ij}_l]) \cong (\mathbb{Z}l_j )\oplus (\bigoplus_{i < j, l}\mathbb{Z}e^{ij}_l).
\]
Here, $l_j$ is the divisor class of the general line in $L_j \cong \mathbb{P}^2$ and $e^{ij}_l$ is that of $E^{ij}_l \cong \mathbb{P}^1$.

Now, we have the invariant $\widetilde{sing}_{0} \colon CH^2(\tilde{\mathcal{X}}, 1) \to \Hdg(\Ker (\phi)) $ by the discussion in the previous section.
Since both $X_ {\ast}$ and $\mathscr{I}_{\ast}$ ($\ast = 0, \infty$) have the symmetry by replacing each linear form $L_i$ with $M_i$ and $M_l$ with $L_l$, we also can consider $\widetilde{sing}_{\infty}$ with another blow up $b' \colon \tilde{\mathcal{X}}' \to \mathcal{X}$ defined by replacing $B_i = \bigcup (L_i \cap M_l)$ by $B'_i \coloneqq \bigcup (M_i \cap L_l)$ in the above construction of $\tilde{\mathcal{X}}$.
From now on we consider only $\widetilde{sing}_{0}$, but one can obtain exactly the same result for $\widetilde{sing}_{\infty}$ by replacing linear forms.

\begin{lem}
\label{basis}
Consider $l_i$ and $e^{ij}_l$ as elements in $H_{2}(\tilde{X}_0, \mathbb{Q}(-1))$.
Then a basis of the $\mathbb{Q}$-vector space $\Hdg(\Ker(\phi)) \subset \Hdg(H_2(\tilde{X}_0, \mathbb{Q}(-1)))$ is given by
\[
\mathcal{B} \coloneqq \left\{ \sum_{1 \le i \le d} l_i \right\} \cup \left\{ \sum_{1 \le l' \le d}(e^{ij}_l - e^{ij}_{l'}) \right\}_{1\le i<j \le d, 1\le l \le (d-1)}
\]
In particular, 
\[{\rm dim}(\Hdg(\Coker (N_{0}))) = \Hdg(\Ker(\phi)) = 1 + (d-1)\binom{d}{2}\].
\end{lem}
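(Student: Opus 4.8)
The plan is to identify $\Hdg(\Ker(\phi))$ explicitly by computing the map $\phi$ from the Clemens--Schmid sequence on the level of homology generators of $\tilde X_0$, and then exhibit $\mathcal{B}$ as a basis of its kernel. Since we are in the case $p=2$, $n=1$, $d=\dim \tilde X_t = 2$, the relevant group is $H_2(\tilde X_0,\mathbb{Q}(-1))$, and by Theorem~\ref{CS exact seq} the cokernel of $N_0$ is isomorphic to $\Ker(\phi)$ where
\[
\phi \colon H_2(\tilde X_0,\mathbb{Q}(-1)) \to H^{2}(\tilde X_0,\mathbb{Q})
\]
is (up to the identifications listed in Theorem~\ref{CS exact seq}) essentially the Gysin/restriction map recording how a curve class on one component meets the double-curve locus of the normal crossing fiber. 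First I would write down a spanning set for $H_2(\tilde X_0,\mathbb{Q})$ coming from the algebraic curves on each component: the line classes $l_i$ on each $\tilde L_i\cong$ (blow-up of $\mathbb{P}^2$) and the exceptional $\mathbb{P}^1$-classes $e^{ij}_l$. Because every class here is algebraic of type $(1,1)$, the Hodge-class condition is automatic, so $\Hdg(H_2(\tilde X_0,\mathbb{Q}(-1)))$ is spanned by these, and the only real work is to compute $\phi$ on this spanning set.

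The key computation is the intersection behaviour of these curve classes across the double curves $\tilde L_i\cap \tilde L_j$ of $\tilde X_0$. For a curve class supported on a single component $\tilde L_j$, the map $\phi$ records its total intersection with the double-curve locus; a class lies in $\Ker(\phi)$ precisely when these boundary intersection numbers cancel compatibly across the components meeting along each double curve. Concretely I expect: (i) the ``diagonal'' sum $\sum_i l_i$ is in the kernel because a hyperplane-section class is globally consistent across the whole fiber (it is the restriction of a class from the total space, hence killed by $\phi$ by the factorization through $H^{k+2}(\mathfrak X|_\Delta,\mathbb{Q})$); and (ii) each difference $e^{ij}_l - e^{ij}_{l'}$ of two exceptional curves over nodes $p^{ij}_l, p^{ij}_{l'}$ lying on the \emph{same} double curve $\tilde L_i\cap \tilde L_j$ is in the kernel, since the two exceptional $\mathbb{P}^1$'s meet that double curve symmetrically and their difference has vanishing boundary. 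I would verify these lie in $\Ker(\phi)$ by the explicit local node picture $xy+tz=0$ and the structure of $\Pic(\tilde L_j)$ recorded above, then argue that these classes are linearly independent (the $l_i$-part and the $e^{ij}_l$-part live in complementary summands of $\bigoplus_j \Pic(\tilde L_j)$).

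The remaining step is the dimension count, which simultaneously shows $\mathcal{B}$ spans and is a basis. Counting: there is one class $\sum_i l_i$, and for each pair $i<j$ (there are $\binom{d}{2}$ of them) the exceptional curves $e^{ij}_l$ range over $l=1,\dots,d$, giving a $(d-1)$-dimensional space of differences $e^{ij}_l-e^{ij}_{l'}$ for each pair. This yields $1+(d-1)\binom{d}{2}$, matching the asserted dimension. To promote ``$\mathcal{B}\subset\Ker(\phi)$ with the right cardinality'' to ``$\mathcal{B}$ is a basis'', I would pin down $\dim\Ker(\phi)$ independently, either by computing $\operatorname{rank}\phi$ directly from the incidence of components and double curves, or by using exactness of Clemens--Schmid together with a dimension count of $\Coker(N_0)=\Coker\bigl(N_0\colon H^1_{lim}\to H^1_{lim}(-1)\bigr)$ via the weight filtration on the LMHS of $H^1$.

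\medskip
\noindent\textbf{The main obstacle} I anticipate is the precise identification and sign-bookkeeping of the map $\phi$ on the exceptional classes: one must check that the only relations among $\{l_i\}\cup\{e^{ij}_l\}$ imposed by $\phi$ are exactly those encoded by $\mathcal{B}$, i.e.\ that there are no further kernel elements mixing the $l_i$ with the $e^{ij}_l$ or relating exceptional curves over nodes on \emph{different} double curves. This is where the general-position hypothesis on $\{L_i\},\{M_l\}$ and the explicit blow-up structure of $\tilde X_0$ must be used to rule out accidental incidences; I would handle it by reducing $\phi$ to a block form adapted to the decomposition of $H_2(\tilde X_0)$ by component and checking that each block has the expected rank.
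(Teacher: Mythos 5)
Your overall skeleton is the same as the paper's: identify $\phi$ with the intersection pairing against the components $\tilde{L}_{i'}$ of $\tilde{X}_0$ inside the total space (using that $\tilde{X}_0$ is rationally equivalent to $\tilde{X}_t$, which is disjoint from every subvariety of $\tilde{X}_0$, to get the ``diagonal'' entries $(l_i\cdot\tilde{L}_i)=-(d-1)$ and $(e^{ij}_l\cdot\tilde{L}_j)=-1$), check that the elements of $\mathcal{B}$ pair to zero with every component, and close with a dimension count. Your kernel-membership claims (i) and (ii) are correct, and your justification for $\sum_i l_i$ is essentially the paper's rational-equivalence trick.

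However, there is a genuine gap: you never compute the ambient group $\Hdg(H_2(\tilde{X}_0,\mathbb{Q}(-1)))$, and it is \emph{not} the free span of $\{l_i\}\cup\{e^{ij}_l\}$. By the homological weight spectral sequence $E^1_{p,q}=H_q(Y^{[p]},\mathbb{Q})\Rightarrow H_{p+q}(Y,\mathbb{Q})$ (here $Y=\tilde{X}_0$, $Y^{[0]}$ the disjoint union of the $\tilde{L}_i$, $Y^{[1]}$ the double curves), the weight $-2$ piece is the \emph{cokernel} of $d_1\colon H_2(Y^{[1]},\mathbb{Q})\to H_2(Y^{[0]},\mathbb{Q})$, and each double curve $\widetilde{L_i\cap L_j}$ imposes the relation $l_j-l_i=\sum_{1\le l'\le d}e^{ij}_{l'}$, obtained in the paper by writing its class $l_{ij}$ as $l_i$ in $\Pic(\tilde{L}_i)$ but as $l_j-\sum_{l}e^{ij}_l$ in $\Pic(\tilde{L}_j)$. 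Missing these $\binom{d}{2}$ relations breaks both halves of your argument. First, your independence argument runs in the wrong direction: $H_2(\tilde{X}_0,\mathbb{Q})^{(-1,-1)}$ is a \emph{quotient} of the $(1,1)$-part of $\bigoplus_j H_2(\tilde{L}_j,\mathbb{Q})$, not a subspace, so independence ``in complementary summands'' upstairs does not descend; indeed the relations show the $l_i$'s and $e^{ij}_l$'s genuinely mix downstairs. Second, the dimension count cannot close: $\phi$ kills every individual difference $e^{ij}_l-e^{ij}_{l'}$, and the intersection matrix has rank $d-1$, so on the free span the kernel has dimension $d+d\binom{d}{2}-(d-1)=1+d\binom{d}{2}$, which exceeds $|\mathcal{B}|$ by exactly $\binom{d}{2}$; your proposed rank computation would therefore conclude that $\mathcal{B}$ does \emph{not} span. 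The discrepancy is precisely the span of the relation classes $l_j-l_i-\sum_{l'}e^{ij}_{l'}$, which lie in the kernel of the matrix because they are zero in $H_2(\tilde{X}_0)$. Only after dividing by them does one get $\dim\Ker(\phi)=1+(d-1)\binom{d}{2}$, and only then can independence of $\mathcal{B}$ be checked (e.g.\ by verifying that no nonzero combination of $\mathcal{B}$ is a combination of the relations). A smaller slip: your Clemens--Schmid alternative should use $H^2_{lim}(X_t)$, not $H^1_{lim}$, since $2p-n-1=2$ here.
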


\begin{proof}
We firstly find a basis of $\Hdg(H_2(\tilde{X}_0, \mathbb{Q}(-1))) = H_2(\tilde{X}_0, \mathbb{Q})^{(-1,-1)}$, and then find that of $\Hdg(\Ker(\phi))$.
For the simplicity we denote 
$Y \coloneqq \tilde{X}_0$, $Y_I \coloneqq \bigcap_{i \in I} \tilde{L}_i$ and $Y^{[k]} \coloneqq \coprod_{|I| = k+1} Y_I$.

The weight spectral sequence in this case is given by dualizing that for cohomology groups (\cite{Del74}):
\[E^1_{p,q} = H_q(Y^{[p]}, \mathbb{Q}) \Rightarrow H_{p+q}(Y, \mathbb{Q}).\] 
Since it degenerates at $E^2$ and differentials for cohomology groups are compatible with the Gysin morphisms,
\[\Gr^W_{-2}(H_2(X_0, \mathbb{Q})) = E^{\infty}_{0,2} \cong \Coker (d_1 \colon H_2(Y^{[1]},\mathbb{Q}) \to H_2(Y^{[0]}, \mathbb{Q}))\]
and the differential $d_1$ is  given by the natural morphism.

Since the strict transformation $\widetilde{L_i \cap L_j}$ is isomorphic to the original line $L_i \cap L_j$, $H_2(\widetilde{L_i \cap L_j}, \mathbb{Q})^{(-1,-1)}$ $(i < j)$ is generated by the unique class $l_{ij}$.
Each of $(l_{ij})$ induces a relation in $H_2(Y^{[0]}, \mathbb{Q})$ via $d_1 \colon H_2(Y^{[1]}, \mathbb{Q}) \to H_2(Y^{[0]}, \mathbb{Q})$.
To see this relation, we should represent $l_{ij}$ in each of $\Pic(\tilde{L}_i)$ and $\Pic(\tilde{L}_j)$ with respect to the above basis.
The intersection products for each $i < j$ are given by
\begin{align*}
&(l_i \cdot l_i)_{\tilde{L}_i} = 1\\
&(l_j \cdot e^{ij}_l)_{\tilde{L}_j} = 0\\
&(e^{ij}_l\cdot e^{i'j}_{l'})_{\tilde{L}_j} = \begin{cases}
							-1 & ((i,l) = (i',l'))\\
							0 & ((i,l) \neq (i', l')),
							\end{cases}
\end{align*}
and 
\begin{align*}
&(l_{ij} \cdot l_i)_{\tilde{L}_i} = (l_{ij} \cdot l_j)_{\tilde{L}_j} = 1\\
&(l_{ij} \cdot e^{ij}_l)_{\tilde{L}_j} = 1\\
&(l_{ij} \cdot e^{i'j}_{l})_{\tilde{L}_j} = 0 　\hspace{5mm}　(i \neq i').
\end{align*}
Hence we can see that
\begin{align*}
&l_{ij} = l_i  \quad {\rm in } \quad \Pic(\tilde{L}_i)\\
&l_{ij} = l_j - (\sum_{l} e^{ij}_l) \quad {\rm in } \quad \Pic(\tilde{L}_j).
\end{align*}
Therefore
\[
H_2(Y, \mathbb{Q})^{(-1,-1)} \cong \langle \; \{l_i\}_{1 \le i \le d},  \{e^{ij}_l\}_{1 \le j < i \le d, 1 \le l \le d} \; \rangle \; / \;  \{ l_j - l_ i = \sum_{1 \le l \le d} e^{ij}_l\}_{1 \le i < j \le d}.
\]
Since each relation is independent from others, it also shows that 
\[{\rm dim}(H_2(Y , \mathbb{Q})^{(-1,-1)})= d + d\binom{d}{2} - \binom{d}{2} = d \bigl(1 + \frac{(d-1)^2}{2}\bigr).\]

To find the required basis of the $\Hdg( \Ker \phi)$, recall that $\phi$ is induced by the Poincar\'e-Lefschetz duality, and hence is defined by taking the intersection products:
\[\phi \colon \alpha \mapsto (\alpha \cdot \underline{\;\;})_{\tilde{{\mathcal{X}}}} \hspace{5mm} (\alpha \in H_2(Y, \mathbb{Q}(-1))).\]
By the transversality, we obtain
\begin{align*}
&(l_i \cdot [\tilde{L}_{i'}])_{\tilde{\mathcal{X}}} = 1 \hspace{5mm}　(i' \neq i)\\
&(e^{ij}_l \cdot [\tilde{L}_{i'}])_{\tilde{\mathcal{X}}} = \begin{cases}
							1 & (i' = i)\\
							0 & (i' \neq i, j).
							\end{cases}
\end{align*}
Moreover, since the graph of  the map $\tilde{X} \to \mathbb{P}^1$ defines an algebraic cycle in $Z^1(\tilde{\mathcal{X}} \times \mathbb{P}^1)$, $Y = \tilde{X}_0$ and $\tilde{X_t}$ are rationally equivalent. Since $\tilde{X_t} \cap V = \emptyset$ for any subvariety $V$ of $Y$,
$(\alpha \cdot Y)_{\tilde{\mathcal{X}}} = (\alpha \cdot \sum \tilde{L_i})_{\tilde{\mathcal{X}}} = 0$ for any $\alpha \in H_2(Y, \mathbb{Q})$.
Thus we also can see that
\begin{align*}
&(l_i \cdot [\tilde{L}_{i}])_{\tilde{\mathcal{X}}} = (l_i \cdot [Y] - [\sum_{i' \neq i}\tilde{L}_{i'}])_{\tilde{\mathcal{X}}} = -(d-1)\\
&(e^{ij}_l \cdot [\tilde{L}_{j}])_{\tilde{\mathcal{X}}} = (e^{ij}_l \cdot [Y] - [\sum_{j' \neq j}\tilde{L}_{j'}])_{\tilde{\mathcal{X}}} = -1.
\end{align*}
Summarizing the computation, we obtain the following intersection matrix:

$$
\arraycolsep3pt
\left(
\begin{array}{@{\,}c|cccccccccccc@{\,}}
&l_1&l_2& \ldots &l_d & e^{12}_1 & e^{12}_2 & \ldots & e^{12}_{d-1} & e^{13}_1 & e^{13}_2 & \ldots & e^{d-1,d}_{d-1} \\
\hline
\tilde{L}_1&-(d-1)&1&\ldots &1&1&1& \ldots&1&1& 1&\ldots&0\\
\tilde{L}_2&1&-(d-1)&\ldots&1&-1&-1&\ldots&-1&0& 0&\ldots&0\\
\tilde{L}_3&1&1&\ldots&1&0&0& \ldots&0&-1& -1 &\ldots&0\\
\vdots &  & & & & & \vdots & & & & & &\\
\tilde{L}_{d-1}&1&1&\ldots&1&0&0&\ldots&0&0&0 &\ldots&1\\
\tilde{L}_{d}&1&1&\ldots&-(d-1)&0&0&\ldots&0&0&0&\ldots&-1\\
\end{array}
\right)
$$

Note that we do not need to consider $e^{ij}_d$, since it is generated by the above other classes via the relation $l_j - l_ i = \sum_l e^{ij}_l$.
It is easy to check that $\mathcal{B}$ in the statement is a basis of the kernel of this matrix.
In fact, by the intersection products we can see that the $1 + (d-1)\binom{d}{2}$ elements of $\mathcal{B}$ are linearly independent.
On the other hand, since the columns for $(e^{1j}_1 )_{j}$ generate any other columns, ${\rm dim}(\Hdg (\Ker (\phi))) = d \bigl(1 + \frac{(d-1)^2}{2}\bigr) - (d-1) = 1 + (d-1)\binom{d}{2}$. 
\end{proof}

We use the following lemma later to prove Lemma $\ref{computation lemma}$.

\begin{lem}
\label{blow up computation}
	the equation $L_{\sigma(\alpha)} = 0$ in $\mathcal{N}_{il} \coloneqq \widetilde{\mathcal{L}_i \cap \mathcal{M}_l} \subset \tilde{\mathcal{X}}$ defines the divisor class
	\begin{align*}
		& e^{\alpha, \sigma(\alpha)}_l \times \{ 0 \} + [p^{\alpha, \sigma(\alpha)}_l \times \mathbb{P}^1] \quad {\rm if } \quad \alpha < \sigma(\alpha)\\
		&[p^{\sigma(\alpha), \alpha}_l \times \mathbb{P}^1] \quad {\rm if } \quad \alpha > \sigma(\alpha).
	\end{align*}
\end{lem}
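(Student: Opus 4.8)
The plan is to reduce the statement to a purely local computation at the single node through which the zero locus of $L_{\sigma(\alpha)}$ passes, and then to track carefully how the chosen blow-up modifies the surface there. Throughout I write $\alpha$ for the first index, so that $\mathcal{N}_{il}$ is read as $\mathcal{N}_{\alpha l} = \widetilde{\mathcal{L}_\alpha \cap \mathcal{M}_l}$. First I would record the global picture on the unmodified surface: on $V \coloneqq \mathcal{L}_\alpha \cap \mathcal{M}_l = (L_\alpha \cap M_l) \times \mathbb{P}^1_t \cong \mathbb{P}^1 \times \mathbb{P}^1$, the linear form $L_{\sigma(\alpha)}$ restricts to a section of bidegree $(1,0)$ whose zero locus is the single fiber $\{q\} \times \mathbb{P}^1_t$, where $q = L_\alpha \cap M_l \cap L_{\sigma(\alpha)}$ is the unique point of the line $L_\alpha \cap M_l$ lying on $L_{\sigma(\alpha)}$. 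This point is the node $p^{\alpha,\sigma(\alpha)}_l$ when $\alpha < \sigma(\alpha)$ and $p^{\sigma(\alpha),\alpha}_l$ when $\alpha > \sigma(\alpha)$. Away from $q$ the form $L_{\sigma(\alpha)}$ is nonvanishing on $V$ and $b$ is an isomorphism, so the entire content of the lemma is local at this node.

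Next I would set up local analytic coordinates. Since the three forms are in general position, $x = L_\alpha$, $y = L_{\sigma(\alpha)}$, $w = M_l$ serve as coordinates on $\mathbb{P}^3$ near the node, and after rescaling $t$ the equation of $\mathcal{X}$ takes the standard nodal form $xy + tw = 0$. In these coordinates $V = \{x = w = 0\}$, and the two relevant blow-up centers are the lines $L_\alpha \cap M_l = \{x = w = 0\}$ (a component of $B_\alpha$) and $L_{\sigma(\alpha)} \cap M_l = \{y = w = 0\}$ (a component of $B_{\sigma(\alpha)}$). The decisive point is which of these centers resolves the node, i.e. which of $B_\alpha$, $B_{\sigma(\alpha)}$ is blown up first, and this is exactly what the ordering between $\alpha$ and $\sigma(\alpha)$ controls.

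The heart of the argument is therefore a case distinction. When $\alpha < \sigma(\alpha)$, the node lies on $L_\alpha \cap M_l$ and is resolved by the blow-up of $B_\alpha$, whose center contains the very line on which $V$ lives. Passing to the chart $w = xs$ and taking the strict transform of $xy + tw = 0$ gives $y + ts = 0$, so on $\mathcal{N}_{\alpha l} = \{x = 0\}$ with coordinates $(s,t)$ one has $L_{\sigma(\alpha)} = y = -ts$. Hence $\mathcal{N}_{\alpha l}$ is the blow-up of $V$ at the node, with exceptional curve $E^{\alpha,\sigma(\alpha)}_l = \{t = 0\}$, and the zero divisor of $L_{\sigma(\alpha)}$ is $\{s = 0\} + \{t = 0\}$, namely the strict transform of the fiber $[p^{\alpha,\sigma(\alpha)}_l \times \mathbb{P}^1]$ plus the exceptional class $e^{\alpha,\sigma(\alpha)}_l \times \{0\}$. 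When $\alpha > \sigma(\alpha)$, the node is resolved instead by the blow-up of $B_{\sigma(\alpha)}$, whose center $\{y = w = 0\}$ is transverse to $V$; an entirely parallel chart computation shows that the strict transform $\mathcal{N}_{\alpha l}$ is unchanged near the node (isomorphic to $V$) and meets the new exceptional curve $E^{\sigma(\alpha),\alpha}_l$ in a single point, so that the zero divisor of $L_{\sigma(\alpha)}$ is just the fiber class $[p^{\sigma(\alpha),\alpha}_l \times \mathbb{P}^1]$, with no exceptional contribution.

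Finally I would observe that the subsequent blow-ups (of the remaining $B_j$) only modify $\mathcal{N}_{\alpha l}$ at the other nodes of the line $L_\alpha \cap M_l$, all of which are disjoint from $q$, and hence do not affect $\mathrm{div}(L_{\sigma(\alpha)})$; combined with the global observation of the first paragraph this yields the claimed classes. The main obstacle, and indeed the only genuine content, is precisely this asymmetry between the two cases: one must recognize that whether the exceptional class $e^{\alpha,\sigma(\alpha)}_l$ enters the answer is governed entirely by whether the resolution of the node blows up the line carrying $V$ itself or a transverse line, which in turn is forced by the ordering $\alpha < \sigma(\alpha)$ versus $\alpha > \sigma(\alpha)$ that fixes the blow-up sequence.
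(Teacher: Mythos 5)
Your proof is correct and follows essentially the same route as the paper's: the same local model $xy+tz=0$ at the node, the same key observation that the ordering of $\alpha$ versus $\sigma(\alpha)$ decides whether the center resolving the node is the line carrying $V$ or the transverse line $L_{\sigma(\alpha)}\cap M_l$, and the same chart computation (your affine chart $w=xs$ is the paper's $[X:Z]$-chart, and your second case matches the paper's explicit one). One caution on your last paragraph: the later centers meet $\mathcal{N}_{\alpha l}$ in more than the other nodes --- for $\alpha<\sigma(\alpha)$ the strict transform of $(L_{\sigma(\alpha)}\cap M_l)\times\mathbb{P}^1$ meets $\mathcal{N}_{\alpha l}$ along the whole strict transform of $p^{\alpha,\sigma(\alpha)}_l\times\mathbb{P}^1$ --- but since these intersections are Cartier divisors on the already-resolved, smooth strict transform of $\mathcal{X}$, those blow-ups are isomorphisms there (this is the paper's remark that $b_{\sigma(\alpha)}$ is an isomorphism over $\mathcal{U}$ once the node is resolved), so your conclusion stands.
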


\begin{proof}
		Since the statement is local, we take the local coordinates $(x, y, z; t) \in \mathbb{A}^3 \times \mathbb{P}^1$ on an analytic open set $\mathcal{U} \subset \mathcal{X}$ about $p^{\alpha, \sigma(\alpha)}$.
	By taking a sufficiently small $\mathcal{U}$, we may assume that
	\[x = L_{\alpha}, \quad y = L_{\sigma(\alpha )}, \quad z = M_l \]
	and
	\[ \mathcal{U}= \{ xy + tz = 0 \} \subset \mathbb{A}^3 \times \mathbb{P}^1.\]
	 In particular, $p^{\alpha, \sigma(\alpha)}$ is the unique singular point in $\mathcal{U}$.
	
	If we assume $\alpha < \sigma(\alpha)$, then only the $\alpha$-th blow up $b_{\alpha}$ changes $\mathcal{U}$. Note that $b_{\sigma(\alpha)}$ is isomorphic over $\mathcal{U}$ since the node $p^{\alpha, \sigma(\alpha)}$ has already been resolved.
	Therefore, the strict transformation $\tilde{\mathcal{U}} \subset \tilde{\mathcal{X}}$ of $\mathcal{U}$ is isomorphic to the strict transformation $Xy + tZ = 0$ via the blow up of $\mathbb{A}^3 \times \mathbb{P}^1$ along $x = z = 0$.
	Here, $[ X : Z ] \in \mathbb{P}^1$ is the blow up coordinate.
	Then $\mathcal{N}_{il} =  \{ x = z = Xy + tZ= 0\} \subset \mathbb{A}^3 \times \mathbb{P}^1 \times \mathbb{P}^1$
	defines a smooth curve for each $t \neq 0$, but it degenerates to two lines $\{x = y = z = 0\}$ and $\{x = z = X = 0\}$ as $t$ goes to $0$.
	Hence the function $L_{\sigma (\alpha)}=y = 0$ defines two lines $\{x = y = z = t = 0\}$ and $\{x = y = z = X = 0\}$.
	The former one is $E^{\alpha, \sigma(\alpha)}_l \times \{ 0 \}$ and the latter is $p^{\alpha, \sigma(\alpha)}_l \times \mathbb{P}^1$.
	
	If $\alpha > \sigma(\alpha)$, we should change $x$ and $y$ in the above discussion. Specifically, we may assume that $\mathcal{N}_{il}$ is defined by the closure of
	$\{x = z = xY + tZ = Yz - Zy = 0\} \cap \bigr( \mathbb{A}^3 \times \mathbb{P}^1 \times \mathbb{P}^1 \setminus \{y = z = 0\} \bigl) = \{ x = z = Z = 0\} \cap \{ y \neq 0\}$.
	Hence $\mathcal{N}_{il} = \{ x = z = Z = 0\}$ and the function $L_{\sigma (\alpha)}=y = 0$ defines only one line $\{ x = y = z = Z = 0\} = p^{\sigma(\alpha), \alpha}_l \times \mathbb{P}^1$.
\end{proof}

By taking the strict transformation of each subvariety in $\mathcal{X}^* \times \mathbb{P}^n$ by the blow up $\tilde{\mathcal{X}} \times \mathbb{P}^n \to \mathcal{X} \times \mathbb{P}^n$, we obtain higher cycles $\tilde{\mathfrak{Z}} \in CH^p(\tilde{\mathcal{X}}^*, n)$ from each $ \mathfrak{Z} \in CH^p(\mathcal{X}^*, n)$.
We denote
$\widetilde{sing}_{0}(\mathfrak{Z}) \coloneqq \widetilde{sing}_{0}(\tilde{\mathfrak{Z}})$
\begin{remark}
Since $\tilde{\mathcal{X}}^*$ is isomorphic to $\mathcal{X}^*$, $\mathcal{H}^{p,n}_{\pi} \cong \mathcal{H}^{p,n}_{\tilde{\pi}}$.
Moreover, $\nu_{\tilde{\mathfrak{Z}}} = \nu_{\mathfrak{Z}}$ via this isomorphism by the functoriality of the Abel-Jacobi map.
Hence we can identify their singular invariants.
\end{remark}

\begin{lem}
\label{computation lemma}
For each $1 \le i < j < k \le d$ and $1 \le l \le d$,
\begin{enumerate}
	\item 
	$\widetilde{sing}_{0}(\gamma_{ijk, l}) = e^{ij}_l + e^{jk}_l -e^{ik}_l$\\
	\item
	$\widetilde{sing}_{0}(\lambda_{il}) = l_i - \bigl( \sum_{i' < i}e^{i'i}_l  \bigr) + (\sum_{i' > i}e^{ii'}_l).$
\end{enumerate}
\end{lem}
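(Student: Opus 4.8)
The plan is to evaluate the four-step composite $\widetilde{sing}_0 = \Hdg(\beta)\circ cl^{1,0}\circ i_0^*\circ res$ directly on the explicit representatives, using the Gersten--Milnor description of a class in $CH^2(\tilde{\mathcal{X}}^*,1)$ as a formal sum $\sum(f_\alpha, Z_\alpha)$. For such a representative $res$ is computed by extending each support $Z_\alpha$ to its strict transform in $\tilde{\mathcal{X}}$, extending $f_\alpha$ along it, and taking the part of $\sum_\alpha {\rm div}(f_\alpha)$ supported on the singular fibres (this part is well defined since the total vanishes on $\mathcal{X}^*$); applying $i_0^*$ then keeps only the components lying in $\tilde{X}_0$. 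Since $cl^{1,0}$ followed by the Poincar\'e--Lefschetz isomorphism $\beta$ sends a curve $C\subset\tilde{X}_0$ to its class $[C]\in H_2(\tilde{X}_0,\mathbb{Q}(-1))$, the whole computation reduces to expressing this residual divisor in the basis $\{l_i, e^{ij}_l\}$; that it lands in $\Hdg(\Ker\phi)$ is automatic from Proposition \ref{cd for sing}. In both cases the supports are the strict transforms $\mathcal{N}_{\alpha l}=\widetilde{\mathcal{L}_\alpha\cap\mathcal{M}_l}$, so Lemma \ref{blow up computation} supplies exactly the local input.

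For (i), the representative of $\gamma_{ijk,l}$ is $\sum_{\alpha\in\{i,j,k\}}(\phi_{\alpha l},\mathcal{N}_{\alpha l})$ with $\phi_{\alpha l}=L_{\sigma(\alpha)}/L_{\sigma^2(\alpha)}$, so on $\mathcal{N}_{\alpha l}$ we have ${\rm div}(\phi_{\alpha l})={\rm div}(L_{\sigma(\alpha)})-{\rm div}(L_{\sigma^2(\alpha)})$. By Lemma \ref{blow up computation}, the divisor of $L_b$ on $\mathcal{N}_{al}$ contributes the exceptional (vertical) curve $e^{ab}_l$ precisely when $a<b$, and only a horizontal component $p^{ab}_l\times\mathbb{P}^1$ when $a>b$. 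First I would record that the horizontal components cancel in the sum over $\alpha$ --- as they must, $\gamma_{ijk,l}$ being a cycle on $\mathcal{X}^*$ --- so that $res(\gamma_{ijk,l})$ is supported on $\tilde{X}_0$. Keeping only the vertical contributions and using $i<j<k$, the term $\alpha=i$ gives $e^{ij}_l-e^{ik}_l$, the term $\alpha=j$ gives $e^{jk}_l$, and the term $\alpha=k$ gives $0$, summing to $e^{ij}_l+e^{jk}_l-e^{ik}_l$.

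For (ii), the representative of $\lambda_{il}$ is $(\pi,\mathcal{N}_{il})$ with $\pi=t$, whose divisor on the surface $\mathcal{N}_{il}$ is $[\mathcal{N}_{il}\cap\tilde{X}_0]-[\mathcal{N}_{il}\cap\tilde{X}_\infty]$; after $i_0^*$ only the first term survives, so the task is to identify the special fibre $\mathcal{N}_{il}\cap\tilde{X}_0$ as a curve class. The local model in the proof of Lemma \ref{blow up computation} shows that over each node $p^{ii'}_l$ with $i'>i$ this fibre acquires the exceptional curve $E^{ii'}_l$ with multiplicity one (the $a<b$ case, $a=i$), while the nodes $p^{i'i}_l$ with $i'<i$ contribute no new component to $\mathcal{N}_{il}$. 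Hence $\mathcal{N}_{il}\cap\tilde{X}_0=\ell_{il}+\sum_{i'>i}E^{ii'}_l$, where $\ell_{il}$ is the strict transform of the line $L_i\cap M_l$ inside $\tilde{L}_i$. Using $\Pic(\tilde{L}_i)\cong\mathbb{Z}l_i\oplus\bigoplus_{i'<i,\,l'}\mathbb{Z}e^{i'i}_{l'}$ together with the fact that in general position $L_i\cap M_l$ meets exactly the blown-up nodes $p^{i'i}_l$ $(i'<i)$, the same intersection-theoretic computation as in Lemma \ref{basis} gives $[\ell_{il}]=l_i-\sum_{i'<i}e^{i'i}_l$, which assembles into the claimed formula.

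The main obstacle is the bookkeeping of the blow-up geometry: correctly applying the $a<b$ versus $a>b$ dichotomy of Lemma \ref{blow up computation} to decide which exceptional curves actually lie in the special fibre of each $\mathcal{N}_{\alpha l}$ (and with what multiplicity), and --- only for $\lambda_{il}$ --- pinning down the class of the strict transform $\ell_{il}$ via the intersection numbers of Lemma \ref{basis}. Once these local contributions are fixed, the remaining passage through $i_0^*$, $cl^{1,0}$ and $\beta$ is a purely formal identification of curves with their homology classes.
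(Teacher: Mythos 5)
Your proposal is correct and follows essentially the same route as the paper: compute $res$ as the boundary of the strict transforms of the defining precycles, feed in Lemma \ref{blow up computation} as the local input at each node, and, for $\lambda_{il}$, identify the fiber of $\mathcal{N}_{il}$ over $t=0$ as the strict transform of $L_i\cap M_l$ plus the exceptional curves $E^{ii'}_l$ $(i'>i)$ and pin down the strict transform's class in $\Pic(\tilde{L}_i)$ by the same intersection-number computation. (Incidentally, your bookkeeping for the term $\alpha=k$, which yields only the horizontal components $[p^{ik}_l\times\mathbb{P}^1]-[p^{jk}_l\times\mathbb{P}^1]$, is the correct application of Lemma \ref{blow up computation}; the paper's displayed formula for $\partial(\tilde{\Gamma}_{\phi_{kl}})$ contains a typo, though its final sum agrees with yours.)
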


\begin{proof}
	(i) Recall that the higher cycle $\gamma_{ijk,l}$ is constructed by the graph in $\mathcal{X}^* \times \mathbb{P}^1$ of $\phi_{\alpha l}$.
	This rational function also defines a graph $\Gamma_{\phi_{\alpha l}}$ on $\mathcal{X} \times \mathbb{P}^1$.
	Let $\tilde{\Gamma}_{\phi_{\alpha l}}$ be the strict transformation of $\Gamma_{\phi_{\alpha l}}$, so that $\tilde{\gamma}_{ijk,l} = (\tilde{\Gamma}_{\phi_{il}} + \tilde{\Gamma}_{\phi_{jl}} + \tilde{\Gamma}_{\phi_{kl}})|_{\mathcal{X}^*}$.
	By the definition of $res \colon CH^2(\tilde{\mathcal{X}}^*, 1) \to CH^1(\tilde{\mathcal{X}}_{sing})$, $res(\tilde{\gamma}_{ijk,l}) = \partial(\tilde{\Gamma}_{\phi_{il}} + \tilde{\Gamma}_{\phi_{jl}} + \tilde{\Gamma}_{\phi_{kl}}) $.
	
	We now compute $\partial(\tilde{\Gamma}_{\phi_{\alpha l}})$.
	For $\bullet = 0, \infty$, let $\rho_{\bullet}\colon \tilde{\mathcal{X}} \hookrightarrow \tilde{\mathcal{X}} \times \mathbb{P}^1$ be the natural embedding at $0$ or $\infty$.
	Then, by the definition of $\phi_{\alpha l}$, the pullback $\rho_{\bullet}^{*}(\tilde{\Gamma}_{\phi_{\alpha l}}) =(\pi_{\tilde{\mathcal{X}}})_{*}(\tilde{\Gamma}_{\phi_{\alpha l}} \cdot \tilde{\mathcal{X}} \times \{\bullet \})_{\tilde{\mathcal{X}} \times \mathbb{P}^1}$ is given by the equation
	\begin{align*}
		L_{\sigma(\alpha)} = 0 \quad {\rm for } \quad \bullet = 0\\
		L_{\sigma^2(\alpha)} = 0 \quad { \rm for } \quad \bullet = \infty
	\end{align*}
	over $\mathcal{N}_{il} \subset \tilde{\mathcal{X}}$.
	Therefore, by applying Lemma \ref{blow up computation} with $\alpha = i, j, k$ respectively, we obtain
	\[\partial(\tilde{\Gamma}_{\phi_{il}}) = [e^{ij}_l \times \{ 0 \} ]+  [p^{ij}_l \times \mathbb{P}^1] - [e^{ik}_l \times \{ 0 \} ] -  [p^{ik}_l \times \mathbb{P}^1]\]
	\[\partial(\tilde{\Gamma}_{\phi_{jl}}) = [e^{jk}_l \times \{ 0 \} ]+  [p^{jk}_l \times \mathbb{P}^1] -  [p^{ij}_l \times \mathbb{P}^1]\]
	\[\partial(\tilde{\Gamma}_{\phi_{kl}}) = [e^{ik}_l \times \{ 0 \} ] - [e^{jk}_l \times \{ 0 \} ] -  [p^{jk}_l \times \mathbb{P}^1]\]
	and hence
	\[\partial(\tilde{\Gamma}_{\phi_{il}} + \tilde{\Gamma}_{\phi_{jl}} + \tilde{\Gamma}_{\phi_{kl}}) = e^{ij}_l + e^{jk}_l -e^{ik}_l\]
	via the identification of $X_0 \times \{0\}$ with $X_0$.
	
	(ii) Since $\lambda_{il}$ is defined with the rational function $\pi \colon \mathcal{X} \to \mathbb{P}^1$, $\partial(\tilde{\lambda}_{il})|_{X_0} = \widetilde{L_i \cap M_j} \subset \tilde{X}_0$.
	In the proof of Lemma \ref{blow up computation}, we have seen that $\mathcal{N}_{il}$ degenerates to 
	$(d-i)$ exceptional curves $\{ E_{i'l} \}_{i' > i}$ and the other one $C_{il} \subset \tilde{L}_i$, which is isomorphic to the strict transformation of $L_i \cap M_l$ by the blow up of $L_i$ at $d(i-1)$ points $\{p^{i'i}_l\}_{i' < i}$.
	By the intersection products
	\[(l_i \cdot [C_{il}])_{\tilde{L}_i} = 1\]
	\[(e^{i'i}_l \cdot [C_{il}])_{\tilde{L}_i} = 1,\]
	we can see that $[C_ {il}] = l_i - \bigl( \sum_{i' < i}e^{i'i}_l  \bigr)$ in $\Pic (\tilde{L}_i)$.
	Therefore
	\[[\widetilde{L_i \cap M_j}] = [C_{il}] + [\bigoplus_{i' > i}  E_{i'l} ] = l_i - \bigl( \sum_{i' < i}e^{i'i}_l  \bigr) + (\sum_{i' > i}e^{ii'}_l)\]
	in $\Pic (\tilde{X}_0)$.
	\end{proof}

\begin{proof}[Proof of Theorem \ref{main thm}]
From Corollary \ref{Cor for sing} and the remark above Lemma \ref{computation lemma}, we should show that the basis in Lemma $\ref{basis}$ is in $\widetilde{sing}_{0}(\mathscr{I_{\ast} \cup \mathscr{D}})$.
As mentioned before Theorem \ref{main thm}, the discussions for $\widetilde{sing}_{0}$ and $\widetilde{sing}_{\infty}$ are exactly the same.
Hence we prove only for the case $\ast = 0$.

Firstly, we can see that
\[\widetilde{sing}_{0}(\sum_{1 \le i \le d} \lambda_{il}) =  \sum_{1 \le i \le d} l_i\]
for each $l$ by Lemma \ref{computation lemma} and direct computation.
Hence it suffices to show that $\sum_{1 \le l' \le d}(e^{ij}_l - e^{ij}_{l'})$ is in the span of $\widetilde{sing}_{0}(\mathscr{I_{\ast} \cup \mathscr{D}})$ for each $1 \le i<j \le d$ and $1 \le l,l' \le d$.
Since $l_j -l_i = \sum_{1 \le l' \le d} e^{ij}_{l'}$ in $H_2(Y, \mathbb{Q})^{(-1,-1)}$, $\widetilde{sing}_{0}(\lambda_{il} - \lambda_{jl})$ is
\begin{align*}
	 (l_i -l_j) + & 2e^{ij}_l -\bigr(\sum_{k<i}e^{ki}_l - e^{kj}_l\bigl) + \bigr(\sum_{i<k<j}e^{ik}_l + e^{jk}_l\bigl) + \bigr(\sum_{j<k}e^{ik}_l - e^{jk}_l\bigl)\\
	&= -\bigr( \sum_{1 \le l' \le d} e^{ij}_{l'} \bigl) + de^{ij}_l -\bigr(\sum_{k<i}e^{ki}_l -e^{ij}_l - e^{kj}_l\bigl)\\
	 & \qquad + \bigr(\sum_{i<k<j}e^{ik}_l + e^{jk}_l - e^{ij}_l\bigl) + \bigr(\sum_{j<k} - e^{ij}_l +e^{ik}_l - e^{jk}_l\bigl)\\
	&= - \bigr( \sum_{1 \le l' \le d}e^{ij}_l - e^{ij}_{l'} \bigl) - \sum_{k < i} \gamma_{kij,l} + \sum_{i < k < j} \gamma_{ikj,l} - \sum_{j < k} \gamma_{ijk,l} .
\end{align*}
Therefore
\[\sum_{1 \le l' \le d}(e^{ij}_l - e^{ij}_{l'}) = \widetilde{sing}_{0}(\lambda_{jl} - \lambda_{il} - \sum_{k < i} \gamma_{kij,l} + \sum_{i < k < j} \gamma_{ikj,l} - \sum_{j < k} \gamma_{ijk,l} ).\]
\end{proof}

From the above computations, specifically we obtain the expression of the singularities relative to the basis $\mathcal{B}$:
\[\widetilde{sing}_{0}(\gamma_{ijk, l}) = \frac{1}{d}\left(\sum_{l'}(e^{ij}_l - e^{ij}_{l'}) + \sum_{l'}(e^{jk}_l - e^{jk}_{l'}) - \sum_{l'}(e^{ik}_l - e^{ik}_{l'})\right)\]
\[\widetilde{sing}_{0}(\lambda_{il}) = \frac{1}{d} \left( \sum_{i} l_i - \sum_{i' < i} \left( \sum_{l'}(e^{ii'}_l - e^{ii'}_{l'}) \right) + \sum_{i' > i} \left( \sum_{l'}(e^{ii'}_l - e^{ii'}_{l'}) \right) \right) \]
Hence the singularity of $\gamma_{ijk, l}$ is linearly independent from the singularities of $\mathscr{D}$. This implies

\begin{cor} \label{gamma is indecomposable cycle.}
	Each of $\gamma_{ijk, l}$ is an $\mathbb{R}$-regulator indecomposable cycle.
\end{cor}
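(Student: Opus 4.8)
The plan is to extract $\mathbb{R}$-regulator indecomposability directly from the singularity formulas displayed just above the statement, using that $\widetilde{sing}_0$ is computed by the residue of the cycle class (Proposition \ref{cd for sing} and Corollary \ref{Cor for sing}). By definition $\gamma_{ijk,l}$ is $\mathbb{R}$-regulator indecomposable exactly when $r^{2,1}_{\mathcal{D},\mathbb{R}}(\gamma_{ijk,l})\notin\mathrm{Im}(\mu)\cong\mathbb{R}\otimes\Hdg^1(X_t)$, so it suffices to show that the singularity of $\gamma_{ijk,l}$ cannot arise from $\mathrm{Im}(\mu)$. The first step is to locate the image of $\mathrm{Im}(\mu)$ under the residue. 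Since $\mu$ is cup product with $H^1_{\mathcal{D}}(X_t,\mathbb{R}(1))$ and the residue is compatible with cup products, a class $\log|c|\cup[Z]$ has residue $(\mathrm{ord}_{t=0}c)\,[Z]|_{X_0}$; among invertible functions on $\mathcal{X}^*$ only $\pi$ contributes a nonzero order, and for our special surfaces $NS(X_t)$ is generated by the hyperplane class together with the $d^2$ lines $L_i\cap M_l$. Hence the residue carries $\mathrm{Im}(\mu)$ into $\mathrm{span}\{\widetilde{sing}_0(\lambda_{il})\}$, the hyperplane class contributing the element $\sum_{1\le i\le d} l_i=\widetilde{sing}_0(\sum_i\lambda_{il})$ that already lies in this span.

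Granting this, the corollary reduces to the linear-algebra statement recorded in the paragraph before it. In the basis $\mathcal{B}$ of Lemma \ref{basis}, each $\widetilde{sing}_0(\lambda_{il})$ has coefficient $\tfrac1d$ on the vector $\sum_{1\le i\le d} l_i$, whereas $\widetilde{sing}_0(\gamma_{ijk,l})$ has coefficient zero there. Thus any relation $\widetilde{sing}_0(\gamma_{ijk,l})=\sum c_{il}\,\widetilde{sing}_0(\lambda_{il})$ forces $\sum c_{il}=0$, and comparing the remaining coordinates along the $\sum_{l'}(e^{ij}_l-e^{ij}_{l'})$ shows that no such combination can reproduce the three nonzero blocks of $\widetilde{sing}_0(\gamma_{ijk,l})$. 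This is precisely the linear independence asserted above, so $\widetilde{sing}_0(\gamma_{ijk,l})$ lies outside $\mathrm{span}\{\widetilde{sing}_0(\lambda_{il})\}$.

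Combining the two steps, $\widetilde{sing}_0(\gamma_{ijk,l})$, and hence $sing_0(\nu_{\gamma_{ijk,l}})$ by Corollary \ref{Cor for sing}, is not the residue of any class in $\mathrm{Im}(\mu)$; therefore $r^{2,1}_{\mathcal{D},\mathbb{R}}(\gamma_{ijk,l})\notin\mathrm{Im}(\mu)$ and $\gamma_{ijk,l}$ is $\mathbb{R}$-regulator indecomposable. I expect the first step to be the real obstacle: one must argue rigorously that the residue separates the algebraic (decomposable) directions from the transcendental ones for \emph{all} of $\mathrm{Im}(\mu)$, not merely for the chosen $\lambda_{il}$. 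This rests on the compatibility of the product $\mu$ with the residue and $sing$ maps, together with the determination of $NS(X_t)$ for the special surfaces $X_t$, and it is here that the generality hypotheses on $\{L_i\}$ and $\{M_l\}$ enter.
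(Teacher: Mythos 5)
Your Step 2 is exactly the paper's proof: the paper records the two displayed expressions of $\widetilde{sing}_0(\gamma_{ijk,l})$ and $\widetilde{sing}_0(\lambda_{il})$ in the basis $\mathcal{B}$ of Lemma \ref{basis} and deduces the corollary from precisely this linear independence, leaving the bridging principle (``a cycle whose singularity is linearly independent from the singularities of $\mathscr{D}$ is $\mathbb{R}$-regulator indecomposable'') implicit. So your skeleton coincides with the paper's, and your instinct that Step 1 is the real content is correct.

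That step, as you have written it, has two genuine gaps. First, a type mismatch: ${\rm Im}(\mu)\cong\mathbb{R}\otimes\Hdg^1(X_t)$ is a fiberwise subspace, and the condition $r^{2,1}_{\mathcal{D},\mathbb{R}}(\gamma_t)\in{\rm Im}(\mu)$ permits real coefficients and divisor classes varying arbitrarily (even discontinuously) in $t$; it does not assert that $\gamma$ agrees, up to regulator image, with a class of the form (invertible function on $\mathcal{X}^*$) cupped with (family of divisors). Your residue formula $\log|c|\cup[Z]\mapsto({\rm ord}_{t=0}c)\,[Z]|_{X_0}$ treats only the latter, so compatibility of residues with cup products does not by itself bound the singularity of every fiberwise-decomposable class. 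Closing this requires an asymptotic argument of the kind used in the proof of Theorem \ref{Hodge-D-Conjecture}: the singularity is the coefficient of the $\log|t|$-divergence of $\Im\nu_{\gamma}(t)$, so if $r^{2,1}_{\mathcal{D},\mathbb{R}}(\gamma_t)$ stayed in $NS(X_t)_{\mathbb{R}}$ for very general $t\to 0$, then $sing_0(\nu_{\gamma})$ would have to lie in the image of the generic N\'eron--Severi group in $\Coker(N_{0})$. Second, identifying that generic N\'eron--Severi group with the span of the $d^2$ lines (your ``hyperplane class together with the lines''; note the hyperplane class is redundant, being $\sum_l[L_i\cap M_l]$) is a nontrivial Noether--Lefschetz-type statement for this family, proven neither in your proposal nor in the paper; it is exactly where the generality of $\{L_i\}$ and $\{M_l\}$ must enter. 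In short, your proposal faithfully reproduces the paper's argument, but the step you yourself flag as the obstacle is a genuine gap, and it is one the paper leaves implicit rather than fills.
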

%%%%%%%%%%%%%%%%%%%%%%%%%%%%%%%%%%%%%%%%%%%%%

\section{Higher Chow Cycles with Non-trivial Limits}
\label{Higher Chow Cycles with Non-trivial Limits}

Theorem \ref{basis} shows that $\mathscr{I}_0$ and $\mathscr{I}_{\infty}$ have non-trivial singularities, but at different singular fibers $X_{0}$ and $X_{\infty}$ respectively. From the construction of each higher cycle in $\delta_{i,lmn}$ in $\mathscr{I}_{\infty}$, it is clear that its singularity at $X_{0}$ is trivial. To show the linearly independence of $\mathscr{I}_{0} \cup \{\delta_{i,lmn}\}$,  we compute the limit invariant of $\delta_{i,lmn}$ at $X_{0}$.

We use the same notation $Y = \tilde{X}_0$, $ Y_I = \bigcap_{i \in I} \tilde{L}_i$ as before and also denote $Y^I \coloneqq \bigcup_{j \notin I } Y_{I \cup \{j\}}$
Recall that the motivic cohomology of the simple normal crossing divisor $Y = \tilde{X}_0$ is obtained by
\[H^{2p-n}_{\mathcal{M}}(Y, \mathbb{Q}(p)) = H^{-n}(Z^{\bullet}_Y(p)).\]
Here, we take a subgroup $Z^p_{\#}(Y_I, \bullet) \coloneqq Z^p_{\mathbb{R}}(Y_I, \bullet)_{Y^I} \subset Z^p(Y_I, \bullet)$ which consists of the precycles in good position with respect to $Y^I$ (\cite{KL07}, Section 8) and $Z^{\bullet}_Y(p)$ is the associated simple complex to the double complex
\[Z^{k,m}_{Y}(p) = \bigoplus_{|I| = k + 1} Z^p_{\#}(Y_I, -m)\]
with Bloch's differential $\partial_{\mathcal{B}}$ and the alternating sum $\partial_{\mathcal{I}}$ of the pullbacks by the inclusion $Y_{I \cup \{j\}} \hookrightarrow Y_I$.
Similarly,  the normal currents $D_{\#}^{\bullet}(Y_I)$ and integral currents $C^{\bullet}_{\#}(Y_I, \mathbb{Q}(p))$ denotes the associated simple complex $K^{\bullet}_Y(p)$ of a double complex
\[K_{Y}^{k,m} \coloneqq \bigoplus_{|I|= k + 1}　\{ C^{2p+m}_{\#}(Y_I, \mathbb{Q}(p)) \oplus F^pD^{2p+m}_{\#}(Y_I) \oplus D_{\#}^{2p+m-1} \}\]
and the Deligne cohomology can be obtained by 
\[ H^{2p-n}_{\mathcal{D}}(Y, \mathbb{Q}(p)) = H^{-n}(K^{\bullet}_Y(p)).\]
When the class of a higher cycle $Z$ in $H^{2p-n}_{\mathcal{M}}(Y, \mathbb{Q}(p))$ can be represented by 
\[\{ Z^{[k]}_{I} \in Z^{k,-k-n}_Y(p)\}_{k, |I| = k + 1},\]
 the componentwise KLM formula
 \[\{ (2 \pi i)^{p-k}((2 \pi i)^{k}T_{Z^{[k]}_I}, \Omega_{Z^{[k]}_I}, R_{Z^{[k]}_I})\}\]
in $K^{-n}_Y(p)$ induces $AJ^{p,n}_Y(Z)$.

Now, the strict transformation $\widetilde{\delta}_{i,lmn}$ in $Y_I$ of $\delta_{i,lmn}$ satisfies this condition and hence we can show
\begin{lem}
\label{limit invariant lemma}
When $d = 4$,
$AJ^{2,1}_{Y}(\widetilde{\delta}_{i,lmn})$ is non-trivial in $H^{3}_{\mathcal{D}}(Y, \mathbb{Q}(1))$.

\end{lem}

\begin{proof}
Consider the moduli of the families $\mathcal{X}$.
Since the choices of the linear forms are general, it suffices to show the non-triviality of $AJ^{2,1}_{Y}(\widetilde{\delta}_{i,lmn})$ for a particular family in this moduli space.
For the simplicity we assume that $i = 4, l=1, m=2, n=3$, and we choose the linear forms
\[
\begin{array}{lll}
L_1 \colon X=0 && M_1\colon X+\mu Y - Z + W = 0\\
L_2 \colon Y=0 && M_2\colon \mu X - Y + Z + W = 0\\
L_3 \colon Z=0 && M_3\colon -X + Y + \mu Z + W = 0\\
L_4 \colon W=0 && M_4\colon X + Y + Z - \mu W = 0.
\end{array}
\]
Here, $\mu$ is the primitive 6th root of unity $\frac{1 + \sqrt{3}i}{2}$.
This family is an example of a \textit{tempered} family, a notion which is defined in Section 3 of \cite{DK11}  for more general toric hypersurfaces. For our case, this condition is  equivalent to each $p^{lm}_{i}$ having the root of unity coordinates with respect to $[X:Y:Z:W].$ A crucial point of the smooth tempered families of toric hypersurafaces which is defined by a reflexive Newton polytope is that the natural Hodge class 
\[\frac{1}{(2 \pi i)^n}d \log x_1 \wedge d\log x_2 \wedge \ldots d \log x_n  \in H^n((\mathbb{C^*})^n, \mathbb{Q}(n))\]
 defined by the toric coordinate symbol $\{x_1, x_2, \ldots, x_n \} \in  H^n_{\mathcal{M}}((\mathbb{C^*})^n, \mathbb{Q}(n))$ can be extended to the Hodge class on the family itself. Therefore if we take 2-form 
 \[\omega \coloneqq \left(\frac{1}{\twist}\right)^2 \frac{dx}{x} \wedge \frac{dy}{y}\]
 with $(x, y) \coloneqq (X/Z, Y/Z)$ for each general fiber $X_t$, dually it defines a family $ \{\omega(t) \in \Hdg(H_{2}(X_t, \mathbb{Q}(-2)))\}$.
By the KLM formula, we shall compute the membrane integral of this test 2-form $\omega$ on the triangle $\Gamma$ whose edges are the strict transformations of the three lines
\[
\begin{split}
	&L_4 \cap M_1  \colon X+\mu Y - Z = 0\\
	&L_4 \cap M_2  \colon \mu X+ Y + Z = 0\\
	&L_4 \cap M_3  \colon -X + Y + \mu Z = 0\\
\end{split}
\]
coming from $\widetilde{\delta}_{i,lmn}$. Then
\begin{align*}
	AJ_{i,lmn} \coloneqq AJ^{2,1}_{Y}(\widetilde{\delta}_{i,lmn})(\omega) &= (- 2\pi i) \left( \int_{L_i}  R_{\widetilde{\delta}_{i,lmn}} \wedge \omega + (2\pi i )\int_{\Gamma}\omega \right)\\
	&= \left( (- 2\pi i)\int_{\widetilde{\delta}_{i,lmn}}  \log(t) \omega \right) -  \left( (2\pi i )^2\int_{\Gamma}\omega \right) .
\end{align*}
However, its first term vanishes since $dx$ and $dy$ are linearly dependent on $\widetilde{\delta}_{i,lmn}$.
Since the vertices of $\Gamma$ with respect to the coordinates $(x,y)$ are given by
\[
\begin{matrix}
p^{lm}_{i}=(-\mu,2 - \mu ), & p^{ln}_{i}=(i \sqrt3,\mu^2), & p^{mn}_{i}=(\frac13 (1 + \mu),-\frac{1}{\sqrt3}i),
\end{matrix}
\]
we obtain \begin{align*}
	- AJ_{i,lmn} &= \int_{\Gamma} \frac{dx}{x} \wedge \frac{dy}{y}\\
	&= \int ^{2-\mu}_{-\frac{1}{\sqrt3}i} \left( \int^{-\mu y + 1}_{y + \mu}\frac{dx}{x} \right)\frac{dy}{y} +  \int ^{\mu^2}_{2-\mu} \left( \int^{\frac{1}{\mu}y - \frac{1}{\mu}}_{y + \mu}\frac{dx}{x} \right)\frac{dy}{y} \\
&= \left(\int ^{2-\mu}_{-\frac{1}{\sqrt3}i} \frac{\log(-\mu y + 1)}{y} dy \right)
+ \left(\int ^{\mu^2}_{2-\mu} \frac{\log (\frac{1}{\mu}y - \frac{1}{\mu})}{y}dy \right)\\ 
& \qquad - \left(\int ^{\mu^2}_{-\frac{1}{\sqrt3}i} \frac{\log(y + \mu)}{y} dy\right).
\end{align*}
Generally, the integral of a multivalued function $\frac{\log (a + bz)}{z}$ ($a, b \in \mathbb{C}$) is
\[
\int \frac{\log (a + bz)}{z}dz = -\Li_2(-\frac{b}{a}z) + \log(z)\left( \log(a + bz) - \log(1 + \frac{b}{a}z) \right).
\]
with the dilogarithm function $\Li_2$. By applying this integral to each term of $- AJ_{i,lmn}$, we can see that
\begin{align*}
	&\int ^{2-\mu}_{-\frac{1}{\sqrt3}i} \frac{\log(-\mu y + 1)}{y} dy = -\Li_2 \left(1 + \mu \right) + \Li_2 \left(\frac{1}{1 + \mu}\right) \\
	& \int ^{\mu^2}_{2-\mu} \frac{\log (\frac{1}{\mu}y - \frac{1}{\mu})}{y}dy = \left( -\Li_2 \left(\frac{1}{- \mu} \right) + \Li_2 (1 - \mu^2)  \right) 
	+ \left( - \frac{2}{9}\pi^2 + \frac23 i \pi \log3 \right) \\
	& -\int ^{\mu^2}_{-\frac{1}{\sqrt3}i} \frac{\log(y + \mu)}{y} dy= \left( \Li_2(-\mu) - \Li_2 \left(\frac{1}{1 - \mu^2}\right)  \right) 
	+ \left( \frac{7}{18}\pi^2 - \frac16 i \pi \log3 \right).
	\end{align*}
and hence
\begin{align*}
- AJ_{i,lmn} = &\Li_2(-\mu) - \Li_2\left(\frac{1}{- \mu}\right) + \Li_2 \left(\frac{1}{1 + \mu}\right) - \Li_2 \left(1 + \mu \right)  \\
& + \Li_2 (1 - \mu^2)- \Li_2 \left(\frac{1}{1 - \mu^2} \right) + \left( \frac{1}{6}\pi^2 + \frac12 i \pi \log(3) \right).
\end{align*}	
	
To compute the dilogarithm terms, we also use functional equations
\[ \Li_2 \left( \frac{z-1}{z} \right) - \Li_2 \left( z \right) = -\frac16 \pi^2 + \log (z) \log(1-z) - \frac12 \log(z)^2 \]
\[\Li_2 \left( \frac{1}{1-z} \right) - \Li_2 \left( z \right) = \frac16 \pi^2 + \log(-z) \log(1-z) - \frac12 \log(1-z)^2,\]
for $z$ which is not on the branch cuts. Note that $\mu$ satisfies the equations
\[
\frac{1}{1 + \mu} =\frac{1}{1- (-\mu)},　\; 1 + \mu = \frac{(-\frac{1}{\mu}) - 1}{-\frac{1}{\mu}}, \;
1 - \mu^2 = \frac{(- \mu) -1}{-\mu}, \; \frac{1}{1 - \mu^2} = \frac{1}{1 - (-\frac{1}{\mu})}.
\]
Hence we can show
\begin{align*}
\Li_2 \left(\frac{1}{1 + \mu}\right) - \Li_2 \left(1 + \mu \right) =  & \Li_2(-\mu) - \Li_2\left(\frac{1}{- \mu}\right)\\
& + \frac13 \pi^2 - \frac38\pi^2 - \frac{\log(3)^2}{8} - \frac14 i \pi \log(3) \\
\Li_2 (1 - \mu^2) - \Li_2 \left(\frac{1}{1 - \mu^2}\right) = & \Li_2(-\mu) - \Li_2\left(\frac{1}{- \mu}\right) - \frac13 \pi^2 \\
&+ \frac38\pi^2 + \frac{\log(3)^2}{8} - \frac14 i \pi \log(3).
\end{align*}
With $-\frac{1}{\mu} = \overline{-\mu}$, finally we obtain
\begin{align*}
- AJ_{i,lmn} &= 3(\Li_2(-\mu) - \Li_2(\overline{-\mu})) + \left( - \frac12 i \pi \log (3) \right) + \left( \frac{1}{6}\pi^2 + \frac12 i \pi \log(3) \right)\\
&= 3(\Li_2(-\mu) - \overline{\Li_2(-\mu)}) + \zeta(2).
\end{align*}
Since the first term is  purely imaginary and non zero, it shows that $AJ_{i,lmn}$ is non-trivial in $\mathbb{C} / \mathbb{Q}(2)$.
\end{proof}

Since $ \{\omega(t) \}$ is the family of Hodge classes, as we see at the end of Section \ref{Singularities and limits of Normal Functions}, we have
\[\lim_{t \to 0}\langle\nu_{\mathfrak{\delta_{i.lmn}}}(t), \omega(t)\rangle \equiv AJ_{i,lmn} \in \mathbb{C} / \mathbb{Q}(2).\]
From the above lemma, the right hand side is non-trivial and hence we have proven 

\begin{thm} \label{limit invariant}
Suppose $d = 4$. For general choices of $\{L_i\}$ and $\{M_l\}$, $\nu_{\delta_{i.lmn}}$has non-trivial limit. Especially the higher cycles $\{\delta_{i.lmn}\} \cup \mathscr{I}_0 \cup \mathscr{D}$ are linearly independent in $CH^2(X_t,1)$.
\end{thm}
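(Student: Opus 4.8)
The plan is to treat the two assertions in turn. For the non-triviality of the limit, I would assemble the ingredients already available: the preceding lemma evaluates $AJ^{2,1}_Y(\widetilde{\delta}_{i,lmn})$ as $3(\Li_2(-\mu)-\overline{\Li_2(-\mu)})+\zeta(2)$, whose imaginary part is non-zero, so the value is non-trivial in $\mathbb{C}/\mathbb{Q}(2)$. Together with Theorem \ref{limit} and the period formula at the close of Section \ref{Singularities and limits of Normal Functions}, this gives $\lim_{t\to 0}\langle\nu_{\delta_{i,lmn}}(t),\omega(t)\rangle\equiv AJ_{i,lmn}\neq 0$. As each $\delta_{i,lmn}$ is constructed purely from the forms $M_l$, its singularity at $X_0$ vanishes, so it lies in the domain of $lim_0$ and the non-vanishing just displayed is exactly the assertion that $\nu_{\delta_{i,lmn}}$ has non-trivial limit.

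For the independence, I would begin with a hypothetical relation $\sum a_{i,lmn}\delta_{i,lmn}+\sum b_{ijk,l}\gamma_{ijk,l}+\sum c_{il}\lambda_{il}=0$ in $CH^2(X_t,1)$ and dismantle it using the boundary invariants. Applying $sing_0\circ\mathcal{AJ}^{2,1}$ kills the $\delta$-terms and forces $\sum b\gamma+\sum c\lambda$ into $\Ker(sing_0)$; applying $sing_\infty$ kills the $\gamma$-terms, which have trivial singularity at $X_\infty$ by the $L\leftrightarrow M$ symmetry, and forces $\sum a\delta+\sum c\lambda$ into $\Ker(sing_\infty)$.

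Since the relation vanishes identically it certainly lies in $\Ker(sing_0)$, and both summands $\sum a\delta$ and $\sum b\gamma+\sum c\lambda$ individually lie there as well, so $lim_0$ is defined on each and additivity gives $\sum a_{i,lmn}\,lim_0(\delta_{i,lmn})+lim_0(\sum b\gamma+\sum c\lambda)=0$. Pairing with the family $\omega(t)$ and inserting the transcendental period of the first paragraph turns this into a numerical identity in $\mathbb{C}/\mathbb{Q}(2)$; the plan is to show the $(\gamma,\lambda)$-contribution to this pairing is rational, so that modulo $\mathbb{Q}(2)$ only the $\delta$-periods survive and, varying $\omega$ and using the $\mathfrak{S}_3$-relabelling symmetry, the coefficients $a_{i,lmn}$ must vanish. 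Once $\sum a\delta=0$ is removed, the residual relation $\sum b\gamma+\sum c\lambda\in\Ker(sing_0)$ is attacked through the explicit expressions of Lemma \ref{computation lemma} for $\widetilde{sing}_0(\gamma_{ijk,l})$ and $\widetilde{sing}_0(\lambda_{il})$ in the basis $\mathcal{B}$ of Lemma \ref{basis}; Corollary \ref{gamma is indecomposable cycle.}, which records that the $\gamma$-singularities are independent of the decomposable directions, separates the $b$'s from the $c$'s once the genericity of $\{L_i\},\{M_l\}$ is used.

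The hard part will be the transcendence input in the middle step: because the singularity maps take values in the finite-dimensional space $\Hdg(\Coker N_0)$ of dimension $1+(d-1)\binom{d}{2}$, $sing_0$ alone cannot distinguish the cycles, and everything hinges on showing that the limit of a $(\gamma,\lambda)$-combination lying in $\Ker(sing_0)$, paired with $\omega(t)$, contributes nothing transcendental that could cancel the dilogarithm values $\Li_2(-\mu)-\overline{\Li_2(-\mu)}$ coming from the $\delta$-terms. This is a non-degeneracy statement about the full period matrix rather than a single Abel-Jacobi computation, and it is complicated precisely because the decomposable $\lambda_{il}$ carry non-trivial singularities at \emph{both} boundary fibers and hence interact with the $\delta$- and $\gamma$-parts simultaneously; verifying this non-interference is where the real work lies.
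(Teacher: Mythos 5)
Your first paragraph is exactly the paper's proof of the limit statement: trivial singularity of $\delta_{i,lmn}$ at $X_0$, the dilogarithm computation $-AJ_{i,lmn}=3(\Li_2(-\mu)-\overline{\Li_2(-\mu)})+\zeta(2)$ from the preceding lemma, and Theorem \ref{limit} together with the period formula at the end of Section \ref{Singularities and limits of Normal Functions} giving $\lim_{t\to 0}\langle\nu_{\delta_{i,lmn}}(t),\omega(t)\rangle\equiv AJ_{i,lmn}\neq 0$ in $\mathbb{C}/\mathbb{Q}(2)$. That part is correct and matches the paper.

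The independence argument is where there is a genuine gap, and it is the one you flagged yourself. Your plan hinges on proving that the $lim_0$-pairing against $\omega(t)$ of a $(\gamma,\lambda)$-combination lying in $\Ker(sing_0)$ is \emph{rational}, so that modulo $\mathbb{Q}(2)$ only the $\delta$-periods survive. Nothing in the paper proves, or needs, such an arithmetic statement: the limits of the normal functions attached to $\mathscr{I}_0\cup\mathscr{D}$ are never computed, and there is no mechanism forcing them into $\mathbb{Q}(2)$. The paper's actual mechanism (carried out in the proof of Theorem \ref{Hodge-D-Conjecture}, which is where the independence claim is made rigorous at the level of real regulator images) is asymptotic rather than arithmetic. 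A combination $R_i(t)$ of cycles from $\mathscr{I}_0\cup\mathscr{D}$ with $sing_0 = d_i$ grows like $\log(t)$, so one rescales by $1/\log(t)$; admissibility (holomorphicity of the coefficient functions) then gives $\lim_{t\to 0}Q\bigl(\Im(\tfrac{1}{\log(t)}R_i(t)),d_j\bigr)=\delta_{ij}$ while $\lim_{t\to 0}Q\bigl(\Im(\tfrac{1}{\log(t)}R_i(t)),\eta\bigr)=C_i$ is merely \emph{some finite number}. For the $\delta$-normal function $R(t)$, whose singularity vanishes, all coefficients $\beta_j(t)$ are holomorphic, so $\lim_{t\to 0}Q(\Im(R(t)),d_j)=0$ and $\lim_{t\to 0}Q(\Im(R(t)),\eta)=-L\neq 0$. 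The resulting pairing matrix is block triangular with determinant $\pm L$, so independence follows with no control whatsoever on the cross terms $C_i$: finiteness suffices. This is precisely the ``non-interference'' you could not supply; the paper obtains it structurally (the $\delta$-row vanishes against every $d_j$ because its singularity is trivial), not through transcendence of periods.

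A secondary problem with your plan: ``varying $\omega$'' is not available. The paper constructs only the single family of Hodge classes $\omega(t)$ coming from the toric symbol (this scarcity is exactly the obstruction discussed in the remark about $d\ge 5$), so pairing gives only one linear condition on the coefficients $a_{i,lmn}$, not enough to kill them all individually. Consistently with this, the paper's rigorous independence statement concerns one $\delta_{i,lmn}$ adjoined to nineteen combinations from $\mathscr{I}_0\cup\mathscr{D}$, rather than the full set of all $\delta$'s simultaneously.
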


\section{Hodge-$\mathcal{D}$-Conjecture for a certain type of $K3$ surfaces}
\label{Application: Hodge-D-Conjecture for a certain type of K3 surfaces}

In this section we consider the case that $d=4$. Hence $X_t$ is a $K3$ surface with the form
\[X_t \colon L_1L_2L_3L_4 + t M_1M_2M_3M_4 = 0,\] 
and $H^{3}_{\mathcal{D}}(X_t, \mathbb{R}(2)) \cong H^{1,1}_{\mathbb{R}}(X_t)(1)$ is 20-dimensional. Though the real regulator map is generally not injective, by computing the limit of real regulator values we can see that the image of 20 higher cycles $\{\delta_{i.lmn}\} \cup \mathscr{I}_0 \cup \mathscr{D}$ actually spans this vector space.

\begin{thm} \label{Hodge-D-Conjecture}
	When $d=4$ and $\{L_i\}, \{M_l\}$ are very general, $ r^{2,1}_{\mathcal{D}, \mathbb{R}}(\{\delta_{i.lmn}\} \cup \mathscr{I}_0 \cup \mathscr{D} ) $ are linearly independent in $H^{1,1}_{\mathbb{R}}(X_t)(1)$, explicitly validating the Hodge-$\mathcal{D}$-Conjecture this case.
\end{thm}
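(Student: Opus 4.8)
The plan is to exhaust the twenty-dimensional target $H^{1,1}_{\mathbb{R}}(X_t)(1) \cong H^3_{\mathcal{D}}(X_t,\mathbb{R}(2))$ by splitting it along the real regulator and handling the decomposable and transcendental parts separately. The exact sequence defining $\mu$ yields a decomposition $H^{1,1}_{\mathbb{R}}(X_t)(1) = \mathrm{Im}(\mu) \oplus \Coker(\mu)$ with $\mathrm{Im}(\mu) \cong \mathrm{NS}(X_t)_{\mathbb{R}}$ and $\Coker(\mu) \cong H^{1,1}_{tr}(X_t,\mathbb{R}(1))$. First I would fix the dimensions: the sixteen lines $\ell_{il} = L_i \cap M_l$ span a sublattice of $\mathrm{NS}(X_t)$ whose Gram matrix, having $\ell_{il}^2 = -2$ and intersection number $1$ exactly when two lines share an $L$- or an $M$-plane, has rank $10$; its kernel is spanned by the relations generated by $\sum_l \ell_{il} = H = \sum_i \ell_{il}$. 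As these classes are constant along the family, a Noether--Lefschetz argument shows that for very general $\{L_i\},\{M_l\}$ the Picard number equals $10$, so both summands are ten-dimensional. The goal is then to prove that $r^{2,1}_{\mathcal{D},\mathbb{R}}(\mathscr{D})$ spans the first summand and that $r^{2,1}_{\mathcal{D},\mathbb{R}}(\mathscr{I}_0 \cup \{\delta_{i,lmn}\})$ spans the second.

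The decomposable part is the easier half. Fibrewise each $\lambda_{il}$ is the decomposable cycle $\{t\} \otimes [\ell_{il}]$, so its regulator image is a fixed nonzero real multiple of the N\'eron--Severi class $[\ell_{il}]$ for general $t$; hence $r^{2,1}_{\mathcal{D},\mathbb{R}}(\mathscr{D})$ spans the same space as $\{[\ell_{il}]\}$, namely all of $\mathrm{NS}(X_t)_{\mathbb{R}} = \mathrm{Im}(\mu)$.

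The transcendental part is the heart of the matter, and here the key is that the real regulator is obtained from the complex Abel--Jacobi pairing by the projection $\mathbb{C}/\mathbb{Q}(2) \twoheadrightarrow \mathbb{R}(1)$ onto the imaginary part, so the two boundary invariants already computed govern its behaviour as $t \to 0$. The three families separate by their asymptotics at the origin: $\mathscr{D}$ is algebraic; by Theorem \ref{main thm} (hence Corollary \ref{gamma is indecomposable cycle.}) each $\gamma_{ijk,l}$ has nonzero singularity at $t=0$, so its transcendental regulator grows like $\log|t|$ with leading coefficient recorded by $\widetilde{sing}_{0}$; and each $\delta_{i,lmn}$ has trivial singularity but, by Theorem \ref{limit invariant}, a nonzero limit, so its regulator stays bounded and tends to the value $\mathrm{Im}(AJ_{i,lmn})$, which is nonzero and governed by $\mathrm{Im}(\Li_2(-\mu))$. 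Since the singularities of $\mathscr{I}_0$ detect the part of $\Coker(\mu)$ in the image of the monodromy $N_0$ while the limits of the $\delta_{i,lmn}$ detect the complementary part in $\Ker(N_0)$, the $\log|t|$-versus-bounded dichotomy cleanly separates the two contributions; within each family, Theorem \ref{main thm} and Theorem \ref{limit invariant}, together with the $\mathfrak{S}_4 \times \mathfrak{S}_4$ symmetry permuting the $L_i$ and $M_l$, supply the needed independence. A dimension count then identifies the combined span with the full ten-dimensional $\Coker(\mu)$.

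Finally, since the limit in Theorem \ref{limit invariant} is computed only for the special tempered configuration, I would invoke semicontinuity: linear independence of a fixed finite collection of regulator images is a Zariski-open condition on the moduli of $(\{L_i\},\{M_l\})$, so its validity at the tempered point propagates to the very general member, yielding surjectivity of $r^{2,1}_{\mathcal{D},\mathbb{R}}$. The step I expect to be the main obstacle is exactly this passage from \emph{non-vanishing} of the boundary invariants to \emph{full-rank} spanning of the transcendental regulator images: one must check that the singularity-driven and limit-driven directions are genuinely complementary and jointly spanning inside the ten-dimensional $H^{1,1}_{tr}(X_t,\mathbb{R}(1))$, which requires both a clean asymptotic separation of the $\log|t|$-growth from the bounded limits and enough symmetry-generated computations to reach every transcendental direction, all while securing the Noether--Lefschetz genericity that keeps the Picard number equal to $10$.
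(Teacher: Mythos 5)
Your high-level strategy for the transcendental part --- separate the cycles by their asymptotics at $t=0$ ($\log|t|$-growth governed by singularities versus bounded behaviour governed by limits) and propagate a special-point computation to the general member --- is indeed the strategy of the paper. But the step you yourself flag as the main obstacle is a genuine gap, and the mechanism you propose for it does not work. The singularity data lives in $\Hdg(\Coker N_0)$, a space attached to the limiting mixed Hodge structure which is $19$-dimensional for $d=4$ (Lemma \ref{basis}); it is not a subspace of, and has no natural map to, the $10$-dimensional $H^{1,1}_{tr}(X_t,\mathbb{R}(1))$ of a nearby fiber. Your claimed dichotomy --- ``singularities of $\mathscr{I}_0$ detect the part of $\Coker(\mu)$ in ${\rm Im}(N_0)$, limits of the $\delta_{i,lmn}$ detect the complementary part in $\Ker(N_0)$'' --- is not meaningful as stated: $N_0$ acts on $H^2_{lim}$ and does not preserve the Hodge decomposition of the nearby fiber $X_t$, so ``the part of $H^{1,1}_{tr}(X_t)$ in ${\rm Im}(N_0)$'' is not well defined; moreover the decomposable cycles $\lambda_{il}$ themselves have nonzero singularities (Lemma \ref{computation lemma}(ii)), so the posited alignment (decomposables $\leftrightarrow$ N\'eron--Severi part, $\gamma$'s $\leftrightarrow$ singularity-part of the transcendental directions) already fails at the level of the invariants. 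What actually converts the boundary invariants into linear independence inside $H^{1,1}_{\mathbb{R}}(X_t)(1)$ is the whole second half of the paper's proof: building the single-valued frame $\{e_0,e_1,e_2,d_1,\ldots,d_{19}\}$ from the LMHS, characterizing real $(1,1)$-classes as $\eta = e_2 + i\Im(l)e_1 + \kappa(t)$, expanding the normal functions $R(t)$ (from one $\delta_{i,lmn}$) and $R_i(t)$ (from combinations of $\mathscr{I}_0\cup\mathscr{D}$ with singularity $d_i$, with admissibility controlling the coefficient functions), and checking that the $20\times 20$ matrix of limits of polarization pairings $Q$ against $\eta,d_1,\ldots,d_{19}$ is triangular with nonvanishing determinant. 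A ``dimension count'' cannot substitute for this computation, and without it the passage from non-vanishing of invariants to full-rank spanning is unproven.

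Two further points. First, your route needs the Picard number of the very general $X_t$ to equal $10$, an unproven Noether--Lefschetz input; the paper never decomposes $H^{1,1}_{\mathbb{R}}(X_t)(1)$ into ${\rm NS}(X_t)_{\mathbb{R}}\oplus H^{1,1}_{tr}(X_t,\mathbb{R}(1))$ and never needs to know $\rho(X_t)$ --- its bookkeeping is $19+1$ (nineteen combinations of $\mathscr{I}_0\cup\mathscr{D}$ whose singularities realize a basis $d_1,\ldots,d_{19}$ of $\Hdg(\Ker N_0)$, plus one $\delta_{i,lmn}$ with nontrivial limit), not your $10+10$, and the $\lambda_{il}$ enter through their singularities rather than as a separate algebraic block. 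Second, ``linear independence of regulator images is a Zariski-open condition on the moduli'' is false as stated: the real regulator varies real-analytically, not algebraically, so the correct propagation is that the limit invariant varies continuously (indeed, the paper's Lemma \ref{limit invariant} reduces to one tempered configuration precisely because non-vanishing of this invariant is a very general, i.e.\ complement-of-countably-many-analytic-subsets, condition), and nondegeneracy of the limiting pairing matrix is an analytically open condition in $t$ and in the configuration. This is a repairable technicality, but the first gap is structural: the theorem is exactly the statement that the asymptotic invariants control the regulator images, and that is what remains to be proved in your outline.
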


\begin{proof}
	Since $\Hdg ( \Coker (N))$ is 19 dimensional, $\Hdg(\Ker (N))$ is also 19 dimensional.
For a fixed $X_t$, take a basis $d_1, \ldots, d_{19}$  of $\Hdg(\Ker (N))$.
By Theorem \ref{main thm}, the images of linear combinations of higher cycles in $\mathscr{I}_0 \cup \mathscr{D}$ give these classes.
We also take an element $\gamma_2 \in H_{lim} \coloneqq H^2_{lim}(X_t, \mathbb{Q}(2))$ which does not vanish in $\Gr^{W}_4 H_{lim}$.
Denote $\gamma_1 \coloneqq N \gamma_2$, $\gamma_0 \coloneqq N^2 \gamma_2$.
Though each $\{ \gamma_i \}$ defines a multivalued section of the cohomology sheaf $\mathcal{H}^{2}$, from them we can define single valued sections by
\[e'_i \coloneqq e^{-l(t)N}\gamma_i(t).\]
with $l(t) \coloneqq \frac{log(t)}{2\pi i}$.
Specifically
$	e'_0 = \gamma_0$,
$	e'_1 = \gamma_1 - l(t)\gamma_0$, and 
$	e'_2 = \gamma_2 - l(t)\gamma_1 + \frac{l^2(t)}{2}\gamma_0.
$
Hence $\{e'_0, e'_1, e'_2, d_1, \ldots, d_{19}\}$ is a single valued frame of the extension $\mathcal{H}^{2}_e$.
Since $d_i$ is already a single valued section (in other word, $d_i = e^{-l(t)N}d_i(t)$), we obtain a single valued frame
\[\{e'_0, e'_1, e'_1, d_1, \ldots, d_{19}\}\]
of the cohomology sheaf $\mathcal{H}^2_e$.

Take a holomorphic section $\omega(t) \in F^2(H^2_{lim, \mathbb{C}})$ such that $\omega \neq 0$ in $\Gr^W_4(H^2_{lim, \mathbb{C}})$.
Since $\dim \Gr^W_4 =1$, $\omega$ is a generator of $\Gr^W_4$.
Generally $\omega(t)$ can be written as 
\[\omega(t) = e'_2 + f(t)e'_1 + g(t)e'_0 + \sum_{i =1}^{19}h_i(t)d_i \]
with holomorphic functions $f(t), g(t), h_i(t)$ by normalizing $\omega$ with respect to the coefficient of $e'_2$.
By changing $t$ to the new coordinate $t' \coloneqq te^{-2\pi if(t)}$
(hence $l(t') = l(t) - f(t)$),
we define $e_i$ from $e'_i$:
\[e_i(t') \coloneqq e^{f(t)N}e_i(t) = e^{-l(t')N}\gamma_i(t).\]
Note that this shift of the parameter does not change $d_i$.
Hence
\begin{align*}
	\omega(t) &= (\gamma_2 - l(t)\gamma_1 + \frac{l^2(t)}{2}\gamma_0) + f(t)(\gamma_1 - l(t)\gamma_0) + g(t)\gamma_0 + \sum h_i(t)d_i \\
	&= (\gamma_2 -l(t')\gamma_1 + + \frac{l^2(t')}{2}\gamma_0) + (g(t)-\frac{f^2(t)}{2})\gamma_0 + g(t)\gamma_0 + \sum h_i(t)d_i \\
	&= e_2 + (g(t)-\frac{f(t)^2}{2})e_0 + \sum h_i(t)d_i.
\end{align*}
For the simplicity, we use the notation $t$ for $t'$ instead of the original coordinate from here.
Then, by changing $f, g, h_i$ to new functions, we can write $\omega$ as
\[\omega(t) = e_2 + g(t)e_0 + \sum h_i(t)d_i = e_2 + \kappa(t) \]
with $\kappa(t) \coloneqq g(t)e_0 + \sum h_i(t)d_i \in \Ker N$.
Though $e_2$ may not  be in $F^2(H^2_{lim, \mathbb{C}})$, we obtain
\[e_1 = Ne_2 = N\omega\]
since $\kappa(t) \in \Ker N$.
Hence $e_1 \in F^1 \cap W_2(H^2_{lim, \mathbb{C}})$ and $e_0 = Ne_1 \in F^0 \cap W_0(H^2_{lim, \mathbb{C}})$.

By the definition, it is easy to check that the quadratic form $Q(e_i, e_j)$ for the polarization is given by the matrix
\[
\begin{pmatrix}
	0& 0 & -1\\
	0 & 1 & 0\\
	-1 & 0 &0
\end{pmatrix}
\]
after a normalization of $\gamma_2$.
Also note that the conjugates satisfy the equalities
\begin{align*}
\overline{e_0} &= e_0\\
\overline{e_1} &= e_1 + 2i\Im(l)e_0\\
\overline{e_2} &= e_2 + 2i\Im(l)e_1 - 2(\Im(l))^2e_0\\
\overline{d_i} &= d_i
.
\end{align*}
with the imaginary part $\Im(l) = -\frac{\log |t|}{2 \pi}$ of $l(t)$.

Take a non zero element $\eta \in H^{1,1}_{lim, \mathbb{R}}$ which is linearly independent from $d_1, \ldots, d_{19} \in H^{1,1}_{lim, \mathbb{R}}$.
We shall express $\eta$ by using $e_0, e_1, e_2$.
Since $\eta \in F^1$, there exists a $C^{\infty}$ function $\phi (t)$ such that
\[\eta = \omega + \phi (t) e_1 = e_2 + \phi(t) e_1 + \kappa(t).\]
$\eta$ is also a real form, hence
\[
\eta = \overline{\eta} = (e_2 + 2i\Im(l)e_1 - 2(\Im(l))^2e_0) + \overline{\phi(t)}(e_1 + 2i\Im(l)e_0) + \overline{\kappa(t)} \in F^1(H^2_{lim}).
\]
Specifically $\overline{\kappa(t)} = \overline{g(t)}e_0 + \sum \overline{h_i(t)}d_i$, hence this term does not include any $e_1$ term.
Thus we can compare the $e_1$ terms of $\eta$ and $\overline{\eta}$ to obtain $\Im(\phi(t)) = \Im(l)$.
Also, the coefficient of the $e_0$ term of $2\eta = \eta + \overline{\eta}$ is given by $2\Re(g(t)) - 2(\Im(l))^2 + 2i\overline{\phi(t)}\Im(l)$, which must be a real number.
This implies that $\phi(t)$ is pure imaginary.
Therefore
\[\phi(t) = i\Im(l)\]
and hence
\[\eta = e_2 + i\Im(l)e_1 + \kappa(t).\]

Now, we compute the rational regulator $R(t)$ of $\delta_{i,lmn}$.
Then the real regulator is given by $\Im(R(t))$ via the isomorphism $H^3_{\mathcal{D}}(X_t, \mathbb{R}(2)) \cong H^{1,1}(X_t, \mathbb{R})\otimes \mathbb{R}(1) $.
We know that the singular invariant of $\delta_{i,lmn}$ is trivial and its limit invariant is a pure imaginary number $iL\coloneqq AJ_{i,lmn} \in \mathbb{C}/ \mathbb{Q}(2)$.
Hence we may write
\[R(t) = iLe_0 + t \left(\alpha_0(t)e_0 + \alpha_1(t)e_1 + \alpha_2(t)e_2 + \sum_{j = 1}^{19} \beta_j(t)d_j \right)\]
with holomorphic functions $\alpha_i(t), \beta_j(t)$.
Hence 
\begin{align*}
\Im(R(t)) &= -\frac{i}{2}(R(t) - \overline{R(t)})\\
&= -\frac{i}{2}\left( iLe_0 + t\left(\alpha_0(t)e_0 + \alpha_1(t)e_1 + \alpha_2(t)e_2 + \sum \beta_j(t)d_j \right) \right.\\
&\ \ -(-iLe_0 + \overline{t}\left( \overline{\alpha_0(t)}e_0 + \overline{\alpha_1(t)}(e_1 + 2i\Im(l)e_0) \right.\\
&\ \ \left. \left. + \overline{\alpha_2(t)}(e_2 + 2i\Im(l)e_1 - 2(\Im(l))^2e_0) + \sum \overline{\beta_j(t)}d_j \right) \right)\\
&= \left( L + \Im(t\alpha_0(t)) - \overline{t\alpha_1(t)}\Im(l) -i\overline{t\alpha_2(t)}(\Im(l))^2 \right) e_0 +\\
 &\ \ \left( \Im(t\alpha_1(t)) - \overline{t\alpha_2(t)}\Im(l)\right) e_1 + \Im(t\alpha_2(t))e_2 + \sum \Im (t\beta_j(t))d_j.
\end{align*}

Finally we consider the limit of $Q(\Im(R(t)), \eta)$ as $t \to 0$.
Note that $d_i \in \Hdg(\Ker(N))$ is orthogonal to each of $e_0, e_1, e_2$.
With the notation $q_{ij} \coloneqq Q(d_i, d_j)$, hence we obtain
\begin{align*}
	Q(\Im(R(t)), \eta) &= Q\left(\left( L + \Im(t\alpha_0(t)) - \overline{t\alpha_1(t)}\Im(l) -i\overline{t\alpha_2(t)}(\Im(l))^2 \right) e_0 +\right. \\
 &\ \ \left. \left( \Im(t\alpha_1(t)) - \overline{t\alpha_2(t)}\Im(l)\right) e_1 + \Im(t\alpha_2(t))e_2, e_2 + i\Im(l)e_1 + g(t)e_0 \right) \\
	& \ \ + Q \left(\sum \Im (t\beta_j(t))d_j, \sum h_i(t)d_i \right)\\
	&=-(L + \Im(t\alpha_0(t)) - \overline{t\alpha_1(t)}\Im(l)) + i\Im(t\alpha_1(t))\Im(l) - g(t)\Im(t \alpha_2)\\
	& \ \ + \sum_{i, j}h_i(t)\Im (t\beta_j(t))q_{ij}\\
	&= -L - \Im(t\alpha_0(t)) + \Re(t\alpha_1(t))\Im(l) - g(t)\Im(t \alpha_2) + \sum_{i, j}h_i(t)\Im (t\beta_j(t))q_{ij}.
	\end{align*}
This value goes to $-L$ as $t \to 0$.
On the other hand, for each $d_i$,
\[Q(\Im(R(t)), d_i) = Q(\sum_j \Im (t\beta_j(t))d_j, d_i) = \sum_j \Im (t\beta_j(t))q_{ij}\]
goes to 0 as $t \to 0$.
Hence we conclude that
\begin{align*}
&\lim_{t\to 0}Q(\Im(R(t)), \eta) = -L \\
&\lim_{t\to 0}Q(\Im(R(t)), d_j) = 0.
\end{align*}
This shows that $r^{2,1}_{\mathcal{D}, \mathbb{R}}(\{\delta_{i.lmn}\} \cup \mathscr{I}_0 \cup \mathscr{D})$ are linearly independent.
In fact,  a linear combination of $\mathscr{I}_0 \cup \mathscr{D}$ defines an admissible normal function $R_i(t)$ for each $i$ $(1 \le i \le 19)$ with $sing_0(R_i(t)) = d_i$. This function has a form 
\[
	R_i(t) \coloneqq \alpha_0(t)e_0 + \alpha_1(t)e_1 + \alpha_2(t)e_2 + i \log (t)d_i + \sum_{j \ne i} \beta_j(t)d_j.
\]
and the admissibility implies that each $\alpha_i(t)$ and $\beta_j(t)$ is a holomorphic function 
(\cite{SZ85}, Proposition 5.28).
Therefore 
\[
\lim_{t \to 0} Q(\Im(\frac{1}{\log(t)}R_i(t)), d_j) = 
\begin{cases}
	1 & (i=j)\\
	0 & (i \ne j)
\end{cases}
\]
and moreover
\begin{center}
	 $\displaystyle{\lim_{t \to 0} Q(\Im(\frac{1}{\log(t)}R_i(t)), \eta)}$ is a finite number $C_i$.
\end{center}
In fact, 
\begin{align*}
	\Im \left( \frac{R_i(t)}{\log (t)} \right) &= \left( \Im(\frac{\alpha_0}{\log(t)}) - \Im(l)\overline{\frac{\alpha_1}{\log (t)} } - i (\Im(l))^2\overline{\frac{\alpha_2}{\log (t)}} \right) e_0 \\
	& \ \  + \left( \Im(\frac{\alpha_1}{\log(t)}) -  \Im(l) \overline{\frac{\alpha_2}{\log (t)}} \right) e_1 \\
	& \ \ + \Im(\frac{\alpha_2}{\log(t)})e_2 + d_i  + \sum_{j \ne i}\Im(\frac{\beta_2}{\log(t)})d_j,
\end{align*}
hence the only non-vanishing term of $\Im \left( \frac{1}{\log (t)}R_i(t) \right)$ as $t \to 0$ is 
\[- \left( \Im(l)\overline{\frac{\alpha_1}{\log (t)} }e_0 + \Im(l) \overline{\frac{\alpha_2}{\log (t)}}e_1 + d_i \right).\]
Thus its paring with $d_i$ is 1 and with $d_j$ vanishes for $i \neq j$ by the orthogonality of $d_j$ and each of $e_0, e_1$. Also the pairing with $\eta = e_2 + i\Im(l)e_1 + \kappa(t)$ is finite.
Summarizing them, the matrix defined by paring $\lim_{t\to 0}Q(\underline{\ \ }, \underline{\ \ })$ is given as
$$
\arraycolsep3pt
\left(
\begin{array}{@{\,}c|ccccc@{\,}}
&\eta &d_1& d_2 & \ldots & d_{19} \\
\hline
 \Im (R(t))&-L&0 & 0 & \ldots & 0\\
\Im(\frac{1}{\log(t)}R_1(t))&C_1&1 & 0 & \ldots & 0\\
\Im(\frac{1}{\log(t)}R_2(t))&C_2&0 & 1 & \ldots & 0\\
\vdots & & & & \ddots \\
\Im(\frac{1}{\log(t)}R_{19}(t))&C_{19}&0 & 0 & \ldots & 1\\

\end{array}
\right).
$$
Hence its determinant is non-trivial and it implies that the real regulator value $\Im (R(t)), \Im (R_1(t)), \ldots, \Im (R_{19}(t))$ are linearly independent.
\end{proof}

The above computation of the real regulator values especially show that
\begin{cor} \label{delta is indecomposable cycle.}
	When $d = 4$, each of $\delta_{i,lmn}$ is an $\mathbb{R}$-regulator indecomposable cycle.
\end{cor}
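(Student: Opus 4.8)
The plan is to read off the statement directly from the real regulator computation already performed in the proof of Theorem \ref{Hodge-D-Conjecture}, rather than to redo any analysis. Recall that a class $\gamma \in CH^2(X_t,1)$ is $\mathbb{R}$-regulator indecomposable precisely when $r^{2,1}_{\mathcal{D},\mathbb{R}}(\gamma)$ does not lie in $\mathrm{Im}(\mu) \cong \mathbb{R}\otimes\Hdg^1(X_t)$, i.e. when its projection to $\Coker(\mu) \cong H^{1,1}_{tr}(X_t,\mathbb{R}(1))$ is nonzero. Under the identification $H^3_{\mathcal{D}}(X_t,\mathbb{R}(2)) \cong H^{1,1}_{\mathbb{R}}(X_t)(1)$ the image of $\mu$ is exactly the algebraic part $\mathrm{NS}_{\mathbb{R}}(X_t)(1)$, which is the $Q$-orthogonal complement of the transcendental part $H^{1,1}_{tr}(X_t,\mathbb{R}(1))$. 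Consequently, to prove indecomposability it suffices to exhibit a single transcendental class against which $r^{2,1}_{\mathcal{D},\mathbb{R}}(\delta_{i,lmn})$ pairs nontrivially: any element of $\mathrm{Im}(\mu)$ pairs to zero with every transcendental class, so a nonzero such pairing forces $r^{2,1}_{\mathcal{D},\mathbb{R}}(\delta_{i,lmn})\notin \mathrm{Im}(\mu)$.

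First I would take for this test class the class $\eta \in H^{1,1}_{lim,\mathbb{R}}$ used in the proof of Theorem \ref{Hodge-D-Conjecture}, which was chosen linearly independent from the $19$ algebraic classes $d_1,\dots,d_{19}$ spanning $\Hdg(\Ker N)$ and hence represents the (one-dimensional) transcendental direction of the limiting Hodge structure. That proof records the two limits $\lim_{t\to 0}Q(\Im R(t),\eta) = -L$ and $\lim_{t\to 0}Q(\Im R(t),d_j)=0$, where $iL = AJ_{i,lmn}$. The nonvanishing $L\neq 0$ is precisely the content of the dilogarithm computation (Lemma \ref{limit invariant}), since there $AJ_{i,lmn}\equiv -3\bigl(\Li_2(-\mu)-\overline{\Li_2(-\mu)}\bigr)$ in $\mathbb{C}/\mathbb{Q}(2)$, the real summand $\zeta(2)$ being killed modulo $\mathbb{Q}(2)$, and this residual value is purely imaginary and nonzero. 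I would then pass from the limit back to a single fiber by continuity: $t\mapsto Q(\Im R(t),\eta)$ is real-analytic on the punctured disc with limit $-L\neq 0$ at the origin, so it is nonzero for all $t$ in a punctured neighborhood of $0$, that is, for very general $t$. For such $t$ the class $r^{2,1}_{\mathcal{D},\mathbb{R}}(\delta_{i,lmn})$ has nonzero transcendental component and is therefore $\mathbb{R}$-regulator indecomposable. Finally, the generality of $\{L_i\},\{M_l\}$ together with the symmetry of the construction (interchanging the roles of the $L$'s and $M$'s, as in the setup for Theorem \ref{main thm}) reduces an arbitrary $\delta_{i,lmn}$ to the normalized case $i=4,\,l=1,\,m=2,\,n=3$ treated explicitly, giving the claim for every $\delta_{i,lmn}$.

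The main obstacle I anticipate is conceptual rather than computational: one must be careful that $\eta$ genuinely detects the decomposable/transcendental dichotomy on the nearby fibers and not merely inside the limiting mixed Hodge structure where the pairing was evaluated. Concretely, I would verify that the holomorphic family $\eta(t)$ determined by the period $\omega(t)$ remains $Q$-orthogonal to $\mathrm{Im}(\mu)=\mathrm{NS}_{\mathbb{R}}(X_t)(1)$ for $t\neq 0$, and that the orthogonal splitting $H^{1,1}_{\mathbb{R}}(X_t)(1)=\mathrm{NS}_{\mathbb{R}}(1)\oplus H^{1,1}_{tr}(1)$ is compatible with the single-valued framing $\{e_0,e_1,e_2,d_1,\dots,d_{19}\}$ in which the limit pairings were computed. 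Once this compatibility is in place, the result is immediate from the nonvanishing of $L$ and the orthogonality of $\mathrm{NS}_{\mathbb{R}}$ to the transcendental lattice, so no further estimates are needed.
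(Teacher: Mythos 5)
Your overall strategy is the paper's own: the corollary is meant to be read off from the limit computations in the proof of Theorem \ref{Hodge-D-Conjecture}, with the nonvanishing of $L$ supplied by the dilogarithm evaluation in Lemma \ref{limit invariant}. However, the step you flag as "to be verified" is precisely a step that fails, and your argument leans on it. The class $\eta$ chosen in that proof is \emph{not} a transcendental class: it is only required to be linearly independent from $d_1,\dots,d_{19}$, and in its normal form $\eta = e_2 + i\Im(l)e_1 + \kappa(t)$ with $\kappa(t) = g(t)e_0 + \sum_i h_i(t)d_i$ the coefficients $h_i(t)$ are arbitrary holomorphic functions, so $Q(\eta, d_j) = \sum_i h_i(t)q_{ij}$ need not vanish. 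Hence $\eta$ is in general not $Q$-orthogonal to $\mathrm{span}_{\mathbb{R}}(d_1,\dots,d_{19})$, a fortiori not to ${\rm Im}(\mu)\subseteq \mathrm{span}_{\mathbb{R}}(d_i)$, and the orthogonality verification proposed in your last paragraph cannot go through. With a test class that is not orthogonal to the algebraic part, a nonvanishing pairing proves nothing: an element $\sum_i c_i d_i$ of ${\rm Im}(\mu)$ can perfectly well pair nontrivially with $\eta$. So the continuity argument of your second paragraph, which uses only $\lim_{t\to 0}Q(\Im(R(t)),\eta) = -L$, does not by itself exclude $r^{2,1}_{\mathcal{D},\mathbb{R}}(\delta_{i,lmn}) \in {\rm Im}(\mu)$.

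The gap is repaired by actually using the other limits you quote but never invoke, namely $\lim_{t\to 0}Q(\Im(R(t)),d_j)=0$ for all $j$. For very general $t$ one has ${\rm Im}(\mu) = \mathbb{R}\otimes\Hdg^1(X_t) \subseteq \mathrm{span}_{\mathbb{R}}(d_i)$, since algebraic classes on a very general fiber are monodromy invariant and hence limit into $\Hdg(\Ker N)\otimes\mathbb{R}$ (this inclusion is also implicit in your proposal and should be stated). Now suppose $\Im(R(t_n)) = \sum_i c_i(t_n)d_i$ along very general $t_n \to 0$. The vanishing limits against the $d_j$, together with the nondegeneracy of $(q_{ij})$ on $\mathrm{span}_{\mathbb{R}}(d_i)$ (the radical of $Q$ restricted to $\Ker N$ is spanned by $e_0$), force $c_i(t_n)\to 0$; since the numbers $Q(\eta(t),d_i)=\sum_j h_j(t)q_{ij}$ stay bounded as $t \to 0$ because the $h_j$ are holomorphic at $t=0$, this gives $Q(\Im(R(t_n)),\eta(t_n))\to 0$, contradicting the limit $-L \neq 0$. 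Equivalently, and even more directly, the paper's explicit expansion of $\Im(R(t))$ shows that its $e_0$-coefficient tends to $L \neq 0$, so $\Im(R(t)) \notin \mathrm{span}_{\mathbb{R}}(d_i)$ for all small $t$, with no pairing needed. Either of these substitutes for your orthogonality step; with that change your write-up becomes a correct unwinding of what the paper compresses into "the above computation especially shows".
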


\begin{remark}
(1) If we assume the given VMHS is a nilpotent orbit and the family is tempered, the above computation in the proof is much simpler. In fact these conditions imply $\omega = e_2$ and hence $\eta = e_2  + i \Im(l) e_1$. To compute the pairing with this $\eta$ and each $d_j$ as $t \to 0$, we may assume that
\[R(t) = iL e_0\]
\[\frac{R_i(t)}{\log (t)} = \frac{\alpha_1(t)}{\log(t)} e_1 + i d_i.\]
Hence we obtain exactly the same matrix as above.

\noindent
(2) Though we can construct the family of higher cycles $\delta_{i,lmn}$ even for general $d \ge 5$, there are two problems to apply the similar discussion to prove the Hodge-$\mathcal{D}$-Conjecture.
Firstly, by applying an action of $PGL_3$ which maps the plane $L_i$ and three lines $L_i \cap M_l$, $L_i \cap M_m$, $L_i \cap M_n$ to the special ones in the proof of Lemma \ref{limit invariant lemma},  we may take exactly the same family of test forms $\{\omega(t)\}$. However, this 2-form may not be a Hodge class unless $\mathcal{X}$ is a tempered family after applying the action. Another issue is that generally $\dim (H^{1,1}_{\mathbb{R}}(X_t)) - \dim (\Hdg(\Coker N )) > 1$, hence we need to show not only the non-triviality of $\delta_{i,lmn}$, but the linearly independence of some of $\{r^{2,1}_{\mathcal{D},\mathbb{R}}(\delta_{ijk,l})\}$ (for example, we need four linearly independent classes when $d = 5$). One possible approach to solve this point is to find an enough number of test Hodge classes such that the matrix of the paring $\lim_{t\to 0}Q(\underline{\ \ }, \underline{\ \ })$ is regular.

\end{remark}
 
%%%%%%%%%%%%%%%%%%%%%%%%

\section{Application: Threefold with Non-Trivial Griffiths Groups}
\label{Application: Threefold with Non-Trivial Griffiths Groups}

As an application of our main theorem, we construct non-trivial elements of the Griffiths group of a certain threefold which is constructed from $X_t$.

When we consider a proper smooth family $\mathcal{Y}$ over a quasi-projective curve $S$, a given family of cycles $\mathcal{Z}$ in $CH^p(\mathcal{Y})$ such that $Z_s$ is in $CH^p_{hom}(Y_s)$ on each fiber $Y_s$ ($s \in S$) defines the class of $Z_s$ in $\Griff^p(Y_s)$.
The coniveau filtration $N^{\bullet}$ of $H^l(Y_s, \mathbb{Q})$ is defined by
\[N^kH^l(Y_s, \mathbb{Q}) = \sum_{W \subset Y_s, {\rm codim} W \geq k} \Ker(H^l(Y_s, \mathbb{Q}) \to H^l(Y_s \setminus W, \mathbb{Q})).\]
Since $AJ (CH^p_{alg}(Y_s)) \subset N^{p-1}H^{2p-1}(Y_s, \mathbb{Q}(p)),$
the Abel-Jacobi map induces a map
\[ \Griff^p(Y_s) \to J \left( H^{2p-1}(Y_s, \mathbb{Q}(p)) / N^{p-1}H^{2p-1}(Y_s, \mathbb{Q}(p))\right).\]
On the other hand, when the completion $\overline{\mathcal{Y}} \to \overline{S}$ is also proper and $\overline{\mathcal{Y}}$ is smooth, $Z_0$ for each discriminant locus $0 \in \overline{S} \setminus S$ is an element of the motivic cohomology $H^{2p}_{\mathcal{M}}(Y_0, \mathbb{Q}(p))$.
Suppose that $Y_0$ is a SNCD with the strata $Y_0^{[k]}$, then we obtain the induced map
\[CH^p_{\ind}(Y_0^{[1]}, 1) \to J(H^{2p-2}_{tr}(Y_0, \mathbb{Q}(p))) \cong J(H^{2p-2}(Y_0, \mathbb{Q}(p)) / N^{p-1}H^{2p-2}(Y_0, \mathbb{Q}(p))\]
by the Abel-Jacobi map.

The stratification $( Y_0^{[k]} )$ as a semi-simplicial hypercovering structure on $Y_0$ defines a weight filtration $W_{\bullet}$ on $H^{2p-r}_{\mathcal{M}}(Y_0, \mathbb{Q}(p))$.
$W_k$ consists of the classes which can be represented by elements in $\bigoplus_{l \geq -k} Z^{l,-l-r}_{Y_0}(p)$.
When $Z_0 \in W_{-1}H^{2p}_{\mathcal{M}}(Y_0, \mathbb{Q}(p))$, it defines an element of $CH^p_{\ind}(Y_0^{[1]}, 1)$ since the degree 0 term of $Z^{\bullet}_{Y_0}(p)$ is the direct sum of the following boxed components:
\[
\xymatrix{
&\vdots & \vdots &\\
\ldots \ar[r] &  \boxed{ Z^p_{\#}(Y_0^{[1]}, 1)} \ar[r]^{\partial_{\mathcal{B}}} \ar[u] & Z^p_{\#}(Y_0^{[1]}) \ar[r]\ar[u] & \ldots \\
\ldots \ar[r] & Z^p_{\#}(Y_0^{[0]},1) \ar[r]^{\partial_{\mathcal{B}}} \ar[u]^{\partial_{\mathcal{I}}} &  \boxed{ Z^p_{\#}(Y_0^{[0]}) \ar[r] \ar[u]^{\partial_{\mathcal{I}}} } & \ldots \\
&\vdots \ar[u] & \vdots \ar[u] &\\
}
\]
We denote the subgroup of $H^{2p}_{\mathcal{M}}(\mathcal{Y}, \mathbb{Q}(p))$ consisting of the families $\mathcal{Z}$ such that the general $Z_s$ is in $CH^p_{hom}(Y_s)$ and $Z_0$ is in $W_{-1}H^{2p}_{\mathcal{M}}(Y_0, \mathbb{Q}(p))$ by $W_{-1}H^{2p}_{\mathcal{M}}(\mathcal{Y}, \mathbb{Q}(p))'$.
With the analytic limit of the Abel-Jacobi value, hence we obtain the diagram
\[
\xymatrix{
 & \Griff^p(Y_s) \ar[r]^-{AJ} & J \left( \frac{H^{2p-1}(Y_s, \mathbb{Q}(p))}{N^{p-1}H^{2p-1}(Y_s, \mathbb{Q}(p)})\right) \ar[dd]^{\lim_{s \to s_0}}
\\
W_{-1}H^{2p}_{\mathcal{M}}(\mathcal{Y}, \mathbb{Q}(p))' \ar[ru]^{\iota^*_s}\ar[rd]^{\iota^*_0} & &
\\
& CH^p_{\ind}(Y_0^{[1]}, 1) \ar[r]^-{AJ} & J(\frac{H^{2p-2}(Y_0, \mathbb{Q}(p))}{N^{p-1}H^{2p-2}(Y_0, \mathbb{Q}(p)}).
}
\]

Since this diagram is commutative (Section 5.2 of \cite{dDI17}), if $Z_0$ defines an $\mathbb{R}$-regulator indecomposable cycle, it implies that $Z_s$ is non-trivial in $\Griff^k(Y_s)$ for a general $s$.

Now, as the proper smooth family $\mathcal{Y}$, we take a resolution of the singular family $\mathcal{Y}'$ defined as follows:
Firstly take a general $t_0 \in \mathbb{P}^1$ near $0$. Then $X_{t_0}$ is a smooth degree $d$ surface which is discussed in the previous sections.
For simplicity, we denote its defining function by $L + t_0M =0$.
With a new parameter $u \in \mathbb{P}^1$, we obtain a family of degree $d+1$ threefolds
\[\mathcal{Y}' \coloneqq \{Y'_s \colon (L + t_0M)( \frac{u}{u^2-1}) + s(L + t_0M(\frac{u}{u^2-1})) = 0\}\]
in $\mathbb{P}^3 \times \mathbb{P}^1 \times \mathbb{P}^1$.
Clearly $Y'_0$ is a singular fiber consisting of the union of the constant family $X_{t_0}  \times \mathbb{P}^1$ along $u$ and two copies of $\mathbb{P}^3$ at $u = 0, \infty$. 
The singular loci of $\mathcal{Y}'$ must be on the base locus $ L + t_0M = L + t_0M(\frac{u}{u^2 -1}) = 0$, and hence we can compute the Jacobian by taking the local coordinates such as $x = L_i, y = L_j, z = M_s$ for $(x, y, z) \in \mathbb{A}^3$. Thus we can see that the singular locus of 
$\mathcal{Y}'$ near $s=0$ comprises the lines
\[Y'_0 \cap L_i \cap M_l \cap \{u= 0, \infty\}\]
 on $Y_0$ and points
\[Y'_t \cap L_i \cap L_j \cap M_l \cap \{u= 0, \infty\}\]
on every fiber $Y_s$.
We resolve them by the successive blow ups of $\mathbb{P}^3 \times \mathbb{P}^1 \times \mathbb{P}^1$ along the constant family of lines $L_i \cap M_l$ for each combination of $i, l$ and then the family turns to be a semistable degeneration toward $s = 0$.
After resolving the other singularities of $\mathcal{Y}'$, we obtain a smooth family of degree $d + 1$ threefolds $\mathcal{Y}$ over $\overline{S} = \mathbb{P}^1$ which degenerates to a SNCD $Y_0$.
\begin{remark}
In particular this is a degenerating family of Calabi-Yau threefolds when $d = 4$.
	A motivation of this construction comes from the toric geometry. $X_t$ is defined as a Laurent polynomial of a 3 dimensional reflexive Newton polytope $\Delta$. By changing the coordinate $u$ to $w = \frac{u-1}{u+1}$ adjusting the coefficients, we obtain the equation
	\[ (L + t_0M)( w - \frac{1}{w}) + s(L + t_0M(w - \frac{1}{w}) = 0.\]
	This is a Laurent Polynomial with support contained in the Minkowski sum of the interval $[-1,1]$ as a polytope and $\Delta$, and is an example of the construction of Tyurin degenerations of Calabi-Yau threefolds from a nef-partition of a reflexive polytope (\cite{DHT17}, Chapter 3.1).
	It will be a future work to extend this construction to more general nef-partitions.
	
	\end{remark}
Since a component of $Y'_0$ is the constant family $X_{t_0} \times \mathbb{P}^1$, each $\gamma_{ijk, l}$ and $\delta_{i, lmn}$ is also on  $Y'_0$.
We denote their pullback to $Y_0$ by $\widetilde{\gamma_{ijk, l}}$ and $\widetilde{\delta_{i, lmn}}$ respectively.
\begin{thm}\label{non-triviality in Griffiths groups}
	For each higher Chow cycle $\gamma_{ijk, l}$ ($\delta_{i, lmn}$) on $X_{t_0}$, there exists a family of  algebraic 1-cycles $\mathcal{C}_ {ijk,l}$ (resp. $\mathcal{D}_{i,lmn}$) on $\mathcal{Y}$ such that the fiber $(\mathcal{C}_{ijk,l})_0$ in $Y_0$ defines the same class with $\widetilde{\gamma_{ijk, l}}$ (resp. $\widetilde{\delta_{i, lmn}}$) in $H^{4}_{\mathcal{M}}(Y_0, \mathbb{Q}(2))$.
\end{thm}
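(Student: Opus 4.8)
The plan is to reduce the statement to a computation in the double complex $Z^\bullet_{Y_0}(2)$ computing $H^4_{\mathcal M}(Y_0,\mathbb Q(2)) = H^0(Z^\bullet_{Y_0}(2))$. In total degree $0$ this complex has two contributions, the algebraic $1$-cycles $Z^2_\#(Y_0^{[0]})$ on the threefold components and the higher cycles $Z^2_\#(Y_0^{[1]},1)$ on the surface double loci, and the relevant coboundary comes from a precycle $\xi\in Z^2_\#(Y_0^{[0]},1)$, whose total differential is $D\xi = (\partial_{\mathcal B}\xi,\ \pm\partial_{\mathcal I}\xi)\in Z^2_\#(Y_0^{[0]})\oplus Z^2_\#(Y_0^{[1]},1)$. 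First I would record that $\widetilde{\gamma_{ijk,l}}$, viewed as the class of $\gamma_{ijk,l}$ sitting on the component $X_{t_0} = V_0\cap P_0$ of $Y_0^{[1]}$, is a genuine cocycle: its Bloch boundary $\partial_{\mathcal B}\gamma_{ijk,l} = \sum_\alpha\operatorname{div}(\phi_{\alpha l})$ vanishes by the very construction of $\gamma_{ijk,l}$, and its restriction $\partial_{\mathcal I}$ to the triple loci $Y_0^{[2]}$ is trivial by the explicit description of the supports. It then suffices to produce a $\xi$ supported on a single threefold component with $\partial_{\mathcal I}\xi = -\gamma_{ijk,l}$ and $C:=\partial_{\mathcal B}\xi$ an honest algebraic $1$-cycle; then $(C,0)-(0,\gamma_{ijk,l}) = D\xi$ yields $[C] = [\widetilde{\gamma_{ijk,l}}]$ in $H^4_{\mathcal M}(Y_0,\mathbb Q(2))$, and $C$ will be the central fibre of the sought family.

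The heart of the argument is the construction of this $\xi$ on the component $P_0$ (the blown-up copy of $\mathbb P^3$ at $u=0$) adjacent to $X_{t_0}$, which is precisely the $K$-theory elevator of \cite{dDI17}. The idea is that each defining datum $(\phi_{\alpha l},\,L_\alpha\cap M_l)$ of $\gamma_{ijk,l}$ extends naturally to $P_0$: the function $\phi_{\alpha l} = L_{\sigma(\alpha)}/L_{\sigma^2(\alpha)}$ is already a global ratio of linear forms on $\mathbb P^3$, and the line $L_\alpha\cap M_l$ lies on a surface $S_\alpha\subset P_0$ (a plane, or the exceptional divisor over $L_\alpha\cap M_l$) meeting $X_{t_0}$ along that line. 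I would choose the $S_\alpha$ and extensions $\tilde\phi_\alpha$ so that $\partial_{\mathcal I}\xi$ recovers $\gamma_{ijk,l}$ on $V_0\cap P_0$, and then compute $C=\sum_\alpha\operatorname{div}_{S_\alpha}(\tilde\phi_\alpha)$. The crucial point — and the reason $C$ is nonzero even though the plane-divisor contributions $\sum_\alpha[(L_{\sigma(\alpha)}\cap M_l)]-[(L_{\sigma^2(\alpha)}\cap M_l)]$ cancel — is that on the blown-up $P_0$ the divisors $\operatorname{div}_{S_\alpha}(\tilde\phi_\alpha)$ acquire exceptional components over the lines $L_i\cap M_l$ that do not cancel, so $C$ is a nontrivial combination of exceptional and strict-transform curves on $P_0$.

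Finally I would spread $C$ out to a family $\mathcal C_{ijk,l}$ of algebraic $1$-cycles on $\mathcal Y\to\overline S$ with $(\mathcal C_{ijk,l})_0 = C$. Here one exploits that the lines $L_i\cap M_l$ are constant in the family and that the blow-ups producing $P_0$ were performed along families over $\overline S$, so the exceptional and strict-transform curves supporting $C$ propagate to a relative $1$-cycle whose closure in $\mathcal Y$ has central fibre $C$; equivalently, since $P_0$ is rational one may take the closure of a suitably chosen $1$-cycle in the generic fibre. The case of $\delta_{i,lmn}$ is handled identically after interchanging the roles of the $L$'s and $M$'s and working on the component $P_\infty$, producing $\mathcal D_{i,lmn}$.

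The main obstacle I anticipate is the second step: making the elevator precise. One must pin down the surfaces $S_\alpha$ and the extended functions $\tilde\phi_\alpha$ inside the resolved component $P_0$ so that (i) the restriction to the double locus $X_{t_0}$ is exactly $\gamma_{ijk,l}$ with no spurious terms, (ii) the restrictions to the triple loci cancel, and (iii) the resulting Bloch boundary is a genuine algebraic $1$-cycle. All three require careful bookkeeping of strict transforms and exceptional divisors through the successive blow-ups, exactly the kind of local computation carried out in Lemma \ref{blow up computation}. A secondary technical point is verifying flatness of the spread-out family $\mathcal C_{ijk,l}$, so that its specialization at $s=0$ is literally $C$ rather than merely a cycle rationally equivalent to it.
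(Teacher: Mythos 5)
Your proposal follows essentially the same route as the paper's proof: the paper takes exactly your $\xi$, namely $(\Delta_{ijk,l})_0$ with support the exceptional surfaces $Z_{\alpha l}$ lying over the lines $L_\alpha \cap M_l$ in the blown-up $\mathbb{P}^3$ at $u=0$ and with the unchanged functions $\phi_{\alpha l}$, so that $\partial_{\mathcal{I}}\xi = \widetilde{\gamma_{ijk,l}}$ while $\mathcal{C}_{ijk,l} \coloneqq \partial_{\mathcal{B}}\xi = P_{ij}+P_{jk}-P_{ik}$ is a sum of exceptional curves that persist on every fiber $Y_s$, giving the family. The only precision to add is that of your two candidate choices for $S_\alpha$ only the exceptional divisor works (a strict transform of a plane meets $X_{t_0}$ along all $d$ lines, creating exactly the spurious terms you flag), and the resulting exceptional components of $C$ are the fibers over the triple points $p^{ij}_l$ rather than curves over the lines, so $C$ contains no strict-transform components.
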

Since the above successive blow up is isomorphic over the component $X_{t_0} \times \mathbb{P}^1$, Corollary \ref{delta is indecomposable cycle.} implies that their pullbacks $\widetilde{\gamma_{ijk, l}}$ are also $\mathbb{R}$-regulator indecomposable. When $d=4$, similarly Corollary \ref{gamma is indecomposable cycle.} implies the same indecomposability of $\widetilde{\delta_{i, lmn}}$.
Thus, by the discussion at the beginning of this section,
\begin{cor}
	The class of each algebraic cycle $(\mathcal{C}_ {ijk,l})_s$ in $\Griff^2(Y_s)$ is non-trivial for a general $s$.
	When $d = 4$, it also holds for $(\mathcal{D}_{i,lmn})_s$.
\end{cor}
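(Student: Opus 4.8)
The plan is to obtain the corollary as a formal consequence of the commutative diagram displayed just above, combined with the $\mathbb{R}$-regulator indecomposability of $\widetilde{\gamma_{ijk,l}}$ and $\widetilde{\delta_{i,lmn}}$ recorded immediately before the statement; no further geometric construction is required beyond the families supplied by Theorem \ref{non-triviality in Griffiths groups}. Fix $p = 2$ and $s_0 = 0$, and let $Z \in W^{-1}H^4_{\mathcal{M}}(\mathcal{Y}, \mathbb{Q}(2))$ be the motivic class of $\mathcal{C}_{ijk,l}$ (resp. $\mathcal{D}_{i,lmn}$). As recalled at the beginning of the section, $Z$ restricts on each smooth fibre to a class $\iota_s^* Z = [(\mathcal{C}_{ijk,l})_s] \in \Griff^2(Y_s)$ and on the special fibre to $\iota_0^* Z = [\widetilde{\gamma_{ijk,l}}] \in CH^2_{\ind}(Y_0^{[1]}, 1)$, the latter identification being exactly the content of Theorem \ref{non-triviality in Griffiths groups}.

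First I would feed $Z$ through the diagram. Its two legs compute, on one hand, the normal function $s \mapsto AJ(\iota_s^* Z)$ valued in $J(H^3(Y_s, \mathbb{Q}(2)) / H^3_{\Hdg})$, and on the other the central-fibre Abel--Jacobi value $AJ(\widetilde{\gamma_{ijk,l}}) \in J(H^2(Y_0, \mathbb{Q}(2)) / N^1 H^2(Y_0, \mathbb{Q}(2)))$; commutativity (Theorem 2.2 of \cite{dDI17}) says that the analytic limit $\lim_{s \to s_0}$ of the former equals the latter. Since $\widetilde{\gamma_{ijk,l}}$ is $\mathbb{R}$-regulator indecomposable, its real regulator image is a nonzero element of the transcendental quotient; as the real regulator is the real part of the complex Abel--Jacobi value, this forces $AJ(\widetilde{\gamma_{ijk,l}}) \neq 0$ there, and in particular the limit of the normal function is nonzero.

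The conclusion is then immediate. The transcendental Abel--Jacobi normal function $\nu(s) \coloneqq AJ(\iota_s^* Z)$ cannot be identically zero near $s_0$, for that would force its limit to vanish, against the previous paragraph; hence $\nu$ is nonzero off a proper analytic subset of $S$. For every such general $s$ the nonvanishing of $AJ(\iota_s^* Z)$ in the transcendental quotient $J(H^3(Y_s, \mathbb{Q}(2)) / H^3_{\Hdg})$, which kills $AJ(CH^2_{alg}(Y_s)) \subset H^3_{\Hdg}(Y_s, \mathbb{Q}(2))$, forces $(\mathcal{C}_{ijk,l})_s \neq 0$ in $\Griff^2(Y_s)$. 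Running the identical argument with $\delta_{i,lmn}$, $\widetilde{\delta_{i,lmn}}$, and Corollary \ref{delta is indecomposable cycle.} in place of their $\gamma$-counterparts disposes of $(\mathcal{D}_{i,lmn})_s$ when $d = 4$.

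The genuinely delicate point --- the one all three formal steps lean on --- is that $\mathbb{R}$-regulator indecomposability at the central fibre, a statement about the imaginary part of a period that (as the $\log|t|$ analysis in the proof of Theorem \ref{Hodge-D-Conjecture} shows) can grow without bound along the degeneration, nonetheless survives to a nonzero \emph{limit} of the transcendental normal function rather than being absorbed into the growing real periods. This compatibility of the real structure with the specialization is precisely what the diagram of \cite{dDI17} encodes, so the only thing left to verify by hand is that the target $J(H^2(Y_0, \mathbb{Q}(2)) / N^1 H^2(Y_0, \mathbb{Q}(2)))$ of the lower Abel--Jacobi map is the same transcendental Jacobian in which Corollary \ref{gamma is indecomposable cycle.} asserts non-vanishing, so that ``indecomposable'' faithfully translates into ``nonzero limit.''
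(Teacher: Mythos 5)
Your proposal is correct and takes essentially the same route as the paper: the paper likewise deduces the corollary by passing the families from Theorem \ref{non-triviality in Griffiths groups} through the commutative diagram of \cite{dDI17} (Theorem 2.2) at the start of the section and invoking the $\mathbb{R}$-regulator indecomposability of $\widetilde{\gamma_{ijk, l}}$ and $\widetilde{\delta_{i, lmn}}$ (Corollaries \ref{gamma is indecomposable cycle.} and \ref{delta is indecomposable cycle.}), noting the blow up is an isomorphism over $X_{t_0} \times \mathbb{P}^1$. Your explicit limit-nonvanishing and analytic-subset steps merely spell out what the paper compresses into ``by the discussion at the beginning of this section.''
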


\begin{figure}[H]
  \centering
  \includegraphics[width=10cm]{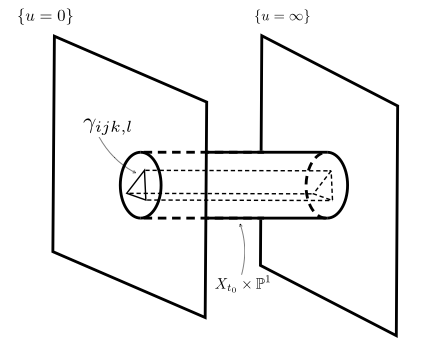}
    \caption{Central fiber $Y'_0$ of the family $\mathcal{Y}'$}
\end{figure}

\begin{proof}[Proof of Theorem \ref{non-triviality in Griffiths groups}]
	Since the construction of $\mathcal{C}_ {ijk,l}$ and $\mathcal{D}_{i,lmn}$ are exactly the same after changing $L$ with $M$, we only consider $\mathcal{C}_ {ijk,l}$.
	Recall that $\gamma_{ijk,l}$ is the sum of precycles with the form 
	\[
	\Gamma_{\alpha l} = (\frac{L_{\sigma (\alpha)}}{L_{\sigma^2 (\alpha)}}, \mathcal{L}_\alpha \cap \mathcal{M}_l). 
	\]
	For each of these precycles, we define a precycle $\Delta_{\alpha l}$ on $\mathcal{Y}$ as follows:
	Since we are taking the successive blow up along $\mathcal{L}_i \cap \mathcal{M}_l$, $\mathcal{L}_j \cap \mathcal{M}_l$, and then $\mathcal{L}_k \cap \mathcal{M}_l$, the zero locus $\mathcal{L}_\alpha \cap \mathcal{M}_l \cap \mathcal{Y} \cap \{u = 0\}$ with the original equation of $L_i$ and $M_j$ defines the unique irreducible component for $\alpha = i$, but two components for $\alpha = j, k$.
	In fact, each intersection $\mathcal{L}_\alpha \cap \mathcal{L}_{\sigma(\alpha)}$ in the above zero locus defines an exceptional curve after the strict transformation.
	(See the local explanation below in this proof).
	Hence the equation $\mathcal{L}_\alpha \cap \mathcal{M}_l \cap \mathcal{Y} \cap \{u = 0\}$ on $\mathcal{Y}$ is generally given by (a strict transformation of) $\mathbb{P}^1 \times \mathbb{P}^1$,  and the exceptional curve $\mathbb{P}^1$ if $\alpha = j,k$. We denote the former irreducible component by $Z_{\alpha l}$, which can be considered a reduced algebraic cycle.
	Therefore we can define a precycle 
	\[
	\Delta_{\alpha l} \coloneqq (\frac{L_{\sigma (\alpha)}}{L_{\sigma^2 (\alpha)}}, Z_{\alpha l}). 
	\]
		Then the restriction of this precycle on the component $X_{t_0} \times \mathbb{P}^1 \subset Y_0$ is the strict transformation $\widetilde{\Gamma_{\alpha l}}$ of the original $\Gamma_{\alpha l}$ on $X_{t_0} \times \{ u = 0 \}$.
		Hence the fiber $(\Delta_{ijk,l})_0$ of  $ \Delta_{ijk,l} \coloneqq \Delta_{i l} + \Delta_{j l} + \Delta_{k l}$ at $s = 0$ is an element of $ Z^2_{\#}(Y_0^{[0]},1)$ such that $\partial_{\mathcal{I}}(\Delta_{ijk,l})_0 = \widetilde{\Gamma_{i l}} + \widetilde{\Gamma_{j l}} + \widetilde{\Gamma_{k l}} = \widetilde{\gamma_{ijk, l}}$.
		Therefore we should define $\mathcal{C}_ {ijk,l}$ by
		\[\mathcal{C}_ {ijk,l} \coloneqq \partial_{\mathcal{B}}(\Delta_{ijk,l}).\]
To see that $\mathcal{C}_ {ijk,l}$ is in $CH^2_{hom}(Y_s)$ for a general $s$, we describe 
 locally what the algebraic cycle $\mathcal{C}_ {ijk,l}$ is.
By changing the coordinates of $\mathbb{P}^3$, assume $x = L_i, y = L_j, z = M_s$.
Then, with an invertible function $f$,  locally $\mathcal{Y}$ is given by the strict transformation of $(xy + t_0 f z)u + s(xy (u^2-1) + t_0 f z u) =0$ for the blow up along $L_i = M_s = 0$ and then $L_j = \widetilde{M_s} = 0$.
Denoting the blow up coordinates by $[X:Z]$ for the former one and $[Y: \widetilde{Z}]$ for the latter, specifically $\mathcal{Y}$ is given by the system of equations.
\[
\begin{cases}
	(XY + t_0 f \widetilde{Z}) u + s (XY(u^2 -1) + t_0 f \widetilde{Z} u) =0 \\
	xZ = zX \\
	y\widetilde{Z} = Z Y
\end{cases}
\]
Over these equations, $\Delta_{ijk,l}$ is given by $\Delta_{il}$ and $\Delta_{jl}$, which are defined by
\[
\Delta_{il} = (y, \{x = z = u = 0\}),\ 
\Delta_{jl} = (\frac{1}{x}, \{y = Z = u = 0\}).
\]
Note that their support cycles are the (blow up of) $\mathbb{P}^1 \times \mathbb{P}^1$ only when $s = 0$.
If $s \neq 0$, we need to take the intersection with $sXY=0$ additionally.
In these local coordinates, the boundary $\mathcal{C}_ {ijk,l} \coloneqq \partial_{\mathcal{B}}(\Delta_{ijk,l})$ is given by the algebraic cycle
\[[\{ x = y = z = u = 0\}] - [\{x = y = Z = u = 0\}],\]
which is exactly the exceptional curve $\mathbb{P}^1$ parametrized by $[X :Z]$.
Denoting this $\mathbb{P}^1$ by $P_{ij}$, therefore globally we obtain 
\[ \mathcal{C}_ {ijk,l} = P_{ij} + P_{jk} - P_{ik} \]
as Figure 4.
Since this cycle is in the exceptional curves for the blow up, it is homologus to zero. In other words, $\mathcal{C}_ {ijk,l} \in CH^2_{hom}(Y_s).$
The boundary $\mathcal{C}_{ijk,l}$ itself is on each fiber $Y_s$, but the precycle $\Delta_{ijk,l}$ is only on $Y_0$.
Hence the class of the higher cycle $\gamma_{ijk,l}$ ``goes down'' to $\mathcal{C}_{ijk,l}$  by the $K$-theory elevator on the singular fiber.

\end{proof}

\begin{figure}[H]
  \centering
  \includegraphics[width=8cm]{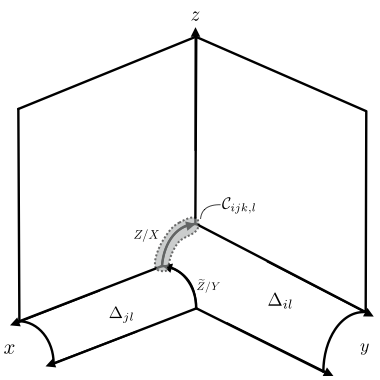}
    \caption{Local figure of $\Delta_{ijk,l}$}
\end{figure}
\begin{figure}[H]
  \centering
  \includegraphics[width=8cm]{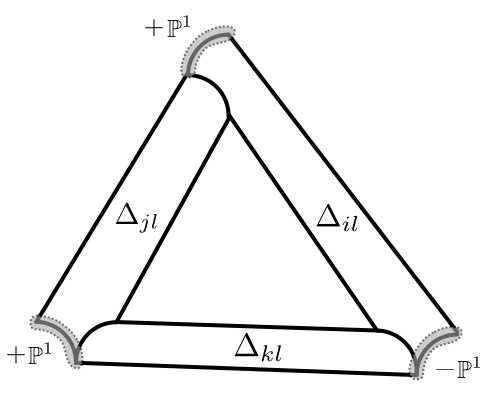}
    \caption{Algebraic Cycle $\mathcal{C}_ {ijk,l}$}
\end{figure}
\bibliographystyle{alpha}
\bibliography{Text}

\Address

\end{document}